\documentclass[11pt, a4paper, oneside]{article}
\usepackage[body={16.5cm, 21.0cm},left=1.5cm,right=1.5cm]{geometry}
\usepackage{natbib}
\usepackage{amsmath, amssymb}
\usepackage{charter}
\usepackage{graphicx}

\usepackage{setspace}
\usepackage{pifont}
\setstretch{1.4}     % Line spacing

\usepackage{fancyhdr}
\pagestyle{fancy} \addtolength{\headheight}{\baselineskip}

% ==================================
% Begin of Commands Used in Document
% ==================================

\newtheorem{thm}{Theorem}
\newtheorem{lem}[thm]{Lemma}
\newtheorem{rem}[thm]{Remark}
\newtheorem{cor}[thm]{Corollary}

\newcommand{\keywords}[1]{{\scriptsize \noindent \textbf{KEY WORDS AND PHRASES:}} {#1}\\}
\newenvironment{proof}{\noindent{\bf Proof:}}{\hfill \ding{111} \\}

\newcommand{\id}{\ensuremath{\displaystyle{\mathop {=} ^d}}}

\newcommand{\field}[1]{\mathbb{#1}}
\newcommand{\real}{\ensuremath{{\field{R}}}}
\newcommand{\mc}[1]{{\ensuremath{\mathcal{#1}}}}

\newcommand{\sumab}[2]{\ensuremath{\sum\limits_{#1}^{#2}}}

\newcommand{\intunit}{\ensuremath{\int_{0}^{1}}}

\newcommand{\arrowf}[1]{\ensuremath{\displaystyle {\mathop {\longrightarrow}_{#1 \rightarrow \infty}\,}}}
\newcommand{\limit}[1]{\ensuremath{\displaystyle {\lim_{#1 \rightarrow{\infty}}}}}

\newcommand{\argmin}[1]{\ensuremath{\displaystyle {\mbox{\footnotesize arg}\,\min_{#1}}}}
\newcommand{\conv}[1]{\ensuremath{\, \displaystyle {\mathop {\longrightarrow}_{n \rightarrow \infty} ^{#1}}}\, }

\newcommand{\kdivn}{\frac{k}{n}}
\newcommand{\ndivk}{\frac{n}{k}}

% ================================
% End of Commands Used in Document
% ================================

\title{On tail trend detection: modeling relative risk\thanks{Research partially supported by ENES -- Extremes in Space, project
PTDC/MAT/112770/2009 and by national funds through the Funda\c c\~ao Nacional para a Ci\^encia e Tecnologia, Portugal -- FCT under the project PEst-OE/MAT/UI0006/2011.}}

\author{Laurens de Haan \\  
 {\small University of Lisbon}\\ {\small Erasmus University Rotterdam} \and {Albert Klein Tank}\\  {\small KNMI, Royal Netherlands}\\ {\small Meteorological Institute} \\
  \and {Cl\'{a}udia Neves}\\ {\small CEAUL and University of Aveiro}}
\date{}

\begin{document}

\maketitle

%\strut

\abstract{The climate change dispute is about  changes over time of environmental characteristics (such as rainfall). Some people say that a possible change is not so much in the mean but rather in the extreme phenomena (that is, the average rainfall may not change much but heavy storms may become more or less frequent). The paper studies changes over time in the probability that some high threshold is exceeded. The model is such that the threshold does not need to be specified, the results hold for any high threshold. For simplicity a certain linear trend is studied depending on one real parameter. Estimation and testing procedures (is there a trend?) are developed. Simulation results are presented. The method is applied to trends in heavy rainfall at 18 gauging stations across Germany and The Netherlands. A tentative conclusion is that the trend seems to depend on whether or not a station is close to the sea.}

\vspace{0.2cm}

\keywords{extreme value distribution, regular variation, extreme rainfall}

%===========================MAIN TEXT==============================

%==================================================================
\section{Introduction}
\label{SecIntro}

In the climate change dispute some people suggest
(\citet{KleinTankKonnen:03,Groismanetal:05,Alexanderetal:06,Zolinaetal:09}) that
perhaps there is no or little change in the mean of the probability distribution of daily rainfall over time but there is a change in the tail that
is, more extreme events occur more frequently. The present paper --
like \citet{Smith:89,HallTajvidi:00,Haneletal:09} -- considers a trend in extremes from the point
of view of extreme value theory.

If one wants to concentrate on a trend connected with extreme events rather than with the central part of the probability distribution function $F$, one should look at a high quantile  $F^{\leftarrow}(p)$ (i.e. the inverse of $F$) for $p$ close to one  or at the exceedance probability $1-F(x)$ at a high level $x$. Hence we consider the limit behavior of $F^{\leftarrow}(p)$ as $p\uparrow 1$ or $F(x)$ as $x \uparrow x^*$, which is the right end point of the probability distribution ($x^*:=\sup\{x:\, F(x)<1\}$). Since the limit relation for $F$ is simpler than for $F^{\leftarrow}$, we concentrate on the behavior of $F(x)$ as $x\uparrow x^*$.

Consider random variables $X(s)$ where $s\geq 0$ is time. Write $F_s(x):= P\{X(s)\leq x\}$ for $x \in \real$. We assume that for all $s>0$
\begin{equation*}
    \frac{1-F_s(x)}{1-F_0(x)}
\end{equation*}
tends to a positive constant for all $s>0$ when $x$ tends to the
right endpoint $x^*$ of $F_0$. Hence the exceedance probability at time $s$ is
systematically a factor times the exceedance probability at time
zero. We consider a simple model for relative risk and assume that for some real trend constant $c$ and all $s\geq 0$ 
\begin{equation}\label{TrendPropag}
    \displaystyle {\lim_{x \uparrow x^*}}\,
    \frac{1-F_s(x)}{1-F_0(x)}= e^{cs}.
\end{equation}
This means that for example (with $s=1$ and $c=1$) that the probability of any extreme event taking place at time $s$ is $e$ times the corresponding probability at time zero. For $c\,s$ small the limit function is approximately linear.

For our analysis we shall need the context of extreme value theory that is, we assume that the distribution function $F_0$ is in the domain of attraction of some extreme value distribution $G_{\gamma}$ i.e., there exist sequences of constants $a_n>0$ and $b_n$ ($n=1,2,\ldots$) such that the normalized maximum of a sample from $F_0$ converges to $G_\gamma$ for some $\gamma \in \real$:
\begin{equation}\label{DOA}
\limit{n} \, F^n(a_n x+b_n)= G_{\gamma}(x):= \exp\{-(1+\gamma x)^{-1/\gamma}\}
\end{equation}
for all $x$ for which $1+\gamma x>0$ (notation $F_0 \in \mc{D}(G_{\gamma})$). This condition is really a condition on the tail $1-F_0$ of the probability distribution since it is equivalent to
\begin{equation}\label{DOAalt}
\limit{t} \, t\bigl\{1-F_0\bigl(a_0(t)x+b_0(t)\bigr)\bigr\}=(1+\gamma x)^{-1/\gamma}
\end{equation}
for $x>0$ where $a_0(t):= a_{[t]}$ and $b_0(t):= b_{[t]}$ (and $[t]$ is the integral part of $t$) \citep[cf. e.g.][]{Coles:01,deHaanFerreira:06}.

We return to condition \eqref{TrendPropag}. Let us first translate this condition into a condition for high quantiles as we discussed before.

Condition \eqref{TrendPropag} is equivalent to the following condition on the quantile function $F^{\leftarrow}$:
\begin{equation}\label{FsInv2F0Inv}
\lim_{p \uparrow 1} \, \frac{F_s^{\leftarrow}(p)-F_0^{\leftarrow}(p)}{a_0\bigl(\frac{1}{1-p}\bigr)}= \frac{e^{c\gamma s}-1}{\gamma}.
\end{equation}
It is convenient (and sometimes usual) to change the notation a bit at this point. Consider the functions $U_s$ defined by
\begin{equation*}
U_s(t)= F_s^{\leftarrow} \Bigl( 1-\frac{1}{t}\Bigr)= \biggl(\frac{1}{1-F_s}\biggr)^{\leftarrow}(t) \quad \mbox{ for } t>1.
\end{equation*}
Relation \eqref{TrendPropag} is seen to be equivalent to
\begin{equation}\label{UsU0Dif}
    \limit{t} \frac{U_s(t)-U_0(t)}{a_0(t)}=
    \frac{e^{ c\gamma s}-1}{\gamma}.
\end{equation}
For $\gamma>0$ this relation can be simplified. In this case it is equivalent to
\begin{equation}\label{Us2U0}
    \limit{t}\, \frac{U_s(t)}{U_0(t)}= e^{c\gamma s}.
\end{equation}
The condition $F_0 \in \mc{D}(G_\gamma)$ in conjunction with \eqref{TrendPropag} implies that the extreme value condition holds for any $F_s$ with the same limit distribution i.e. (cf. \eqref{DOAalt})
\begin{equation}\label{DOAesses}
\limit{t} \, t\bigl\{1-F_s\bigl(a_s( t)x+b_s(t)\bigr)\bigr\}=(1+\gamma x)^{-1/\gamma}
\end{equation}
where $a_s>0$ and $b_s$ are appropriately chosen functions. We shall use the well-known facts that \eqref{DOAesses} is equivalent to
\begin{equation}\label{ERVUs}
    \limit{t}\frac{U_s(tx)-U_s(t)}{a_s(t)}=
    \frac{x^{\gamma}-1}{\gamma} \quad \mbox{ for } x>0
\end{equation}
and that 
\begin{equation}\label{Aux}
b_s(t)-U_s(t)= o\bigl(a_s(t)\bigr), \; t \rightarrow \infty\, ; \quad \limit{t} \frac{a_s(tx)}{a_s(t)}= x^{\gamma} \; \mbox{ for } x>0.
\end{equation}
Moreover for $s>0$
\begin{equation}\label{ScaleFct}
	\limit{t}\, \frac{a_s(t)}{a_0(t)}= e^{c\gamma s}.
\end{equation}
All the mentioned implications will be proved in Appendix \ref{AppRel}.

We have restricted ourselves to the model in \eqref{TrendPropag} that is, a trend function of the form $e^{cs}$, since we are interested in a monotone trend and also because a more general change would have been more difficult to detect. Figure \ref{Fig.Moms} gives some insight into the difficulty of detecting more complex trend functions, namely a temporal trend in the extreme value index $\gamma$. In the future we shall study more general (not monotone) changes in a similar manner, possibly with adjustments enabling other appropriate estimation procedures.

\begin{figure}
\caption{\footnotesize Estimates $\hat{\gamma}(s_j)$ for each one of the $m+1=18$ time points $s_j=j/m$, $j=0,1,2,\ldots, m$, at one particular location, a gauging station in The Netherlands. The number $k$ corresponds to the number of observations above the hight random threshold $X_{n-k,n}(s_j)$ (Left). Corresponding estimates $\hat{\gamma}(s_j)$ with $k=30$ (Right).} 
\begin{center}
\includegraphics*[scale=0.45]{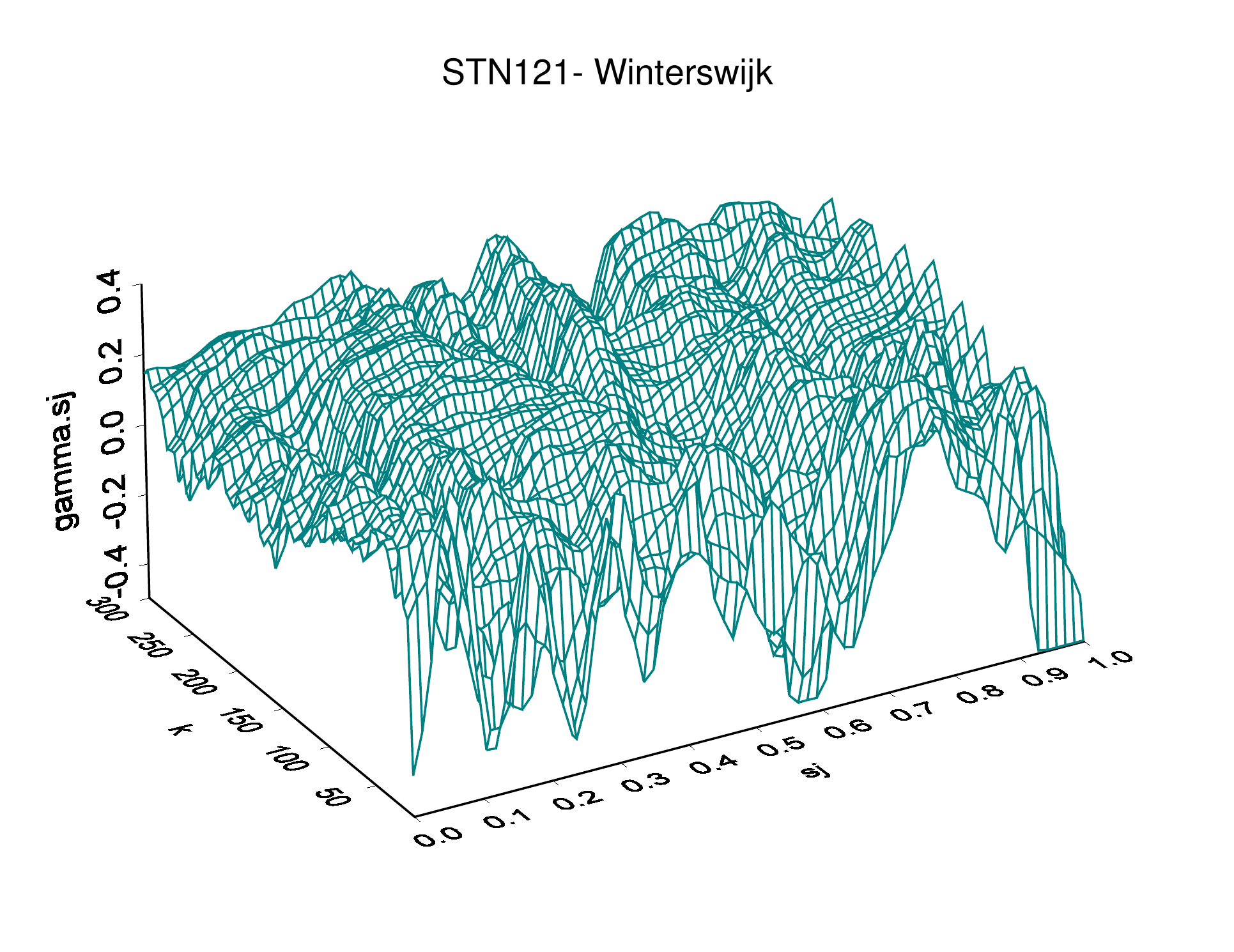}
\includegraphics*[scale=0.4]{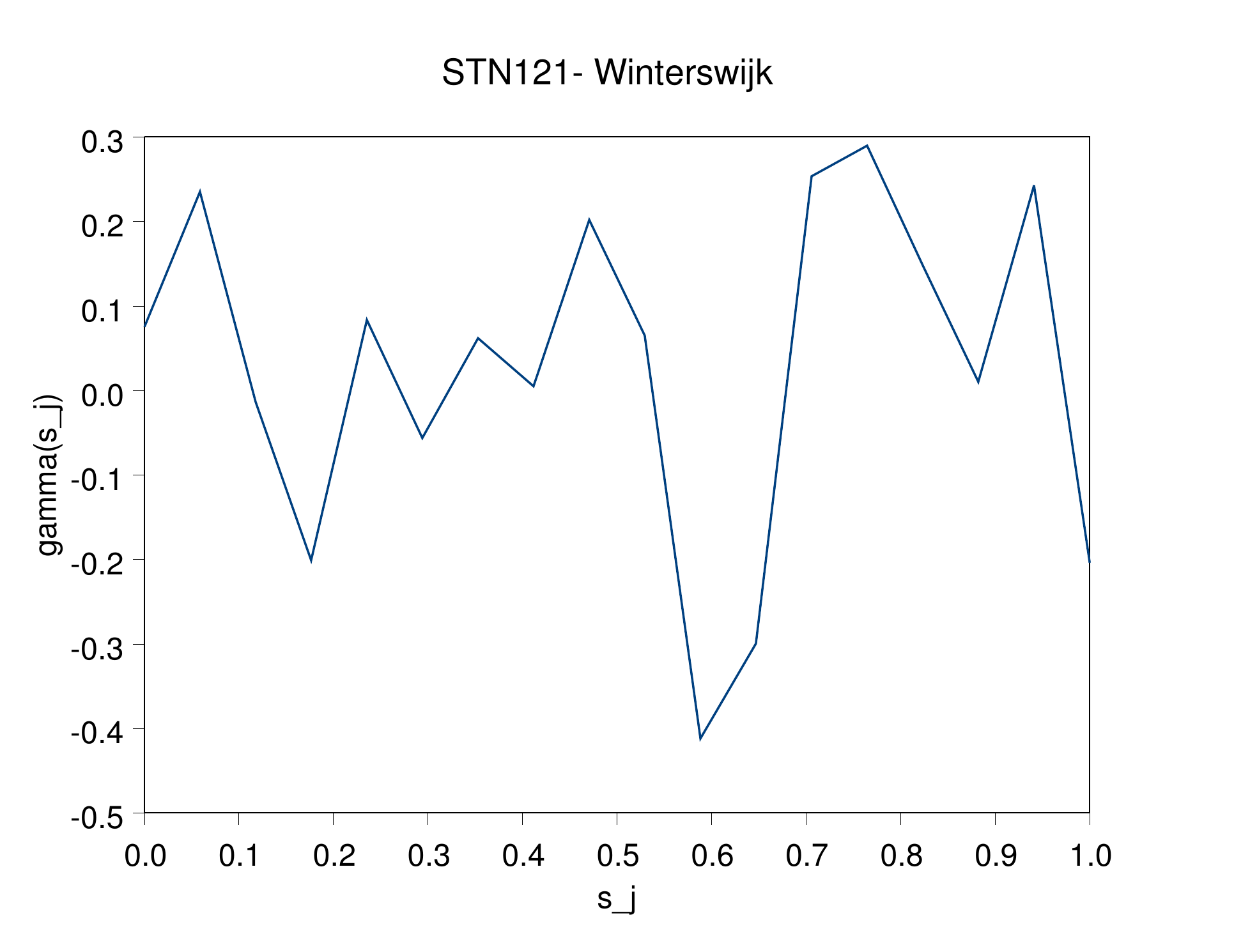}
\label{Fig.Moms}
\end{center}
\end{figure}

The aim of the paper is to develop estimators and testing procedures for the parameter $c$. This will be done in a semi-parametric way on the basis  of the limit relations \eqref{TrendPropag}, \eqref{UsU0Dif} and \eqref{Us2U0}.

Suppose that we have repeated observations at discrete time points 
$0=s_0<s_1<s_2<\ldots<s_m$. It is assumed that
$\bigl\{X_i(s_j)\bigr\}^{\;n\quad m}_{i=1\, j=1}$ are all
independent and that $X_1(s_j),\, X_2(s_j), \ldots, X_n(s_j)$ have
the same distribution function $F_{s_j}$ for all $j$. Let $X_{1,n}(s_j) \leq X_{2,n}(s_j) \leq \ldots \leq X_{n,n}(s_j)$ be their order statistics.

Since in the limit relations (e.g. relation \eqref{FsInv2F0Inv}) only high quantiles play a role, we can expect that the impact of such a condition can be detected only among the higher order statistics. Our estimators will be based on the set of $k$ highest order statistics $\bigl(X_{n-k,n}(s_j), X_{n-k+1,n}(s_j), \ldots, X_{n,n}(s_j)\bigr)$  for each $j$. In order to be able to apply the law of large numbers and the like we need to let $k$ depend on $n$, $k=k_n$ and $\lim_{n\rightarrow \infty} k_n= \infty$. On the other hand we want to determine $k$ in such a way that we deal with the tail of the distribution only. That leads to the condition $k_n= o(n)$, $n\rightarrow \infty$. For the asymptotic normality of the estimators a further restriction will be imposed on the sequence $k_n$.

Now we are ready to construct estimators for $c$.

\begin{description}
  \item[(i)] Consider first the simplest case, $\gamma >0$. Relation \eqref{Us2U0} implies $(s\geq 0)$
\begin{equation}\label{LimLogs}
\limit{n} \, \log U_s\bigl( \frac{n}{k}\bigr) -  \log U_0\bigl( \frac{n}{k}\bigr)= c\gamma s.
\end{equation}
We are going to replace the quantities at the left hand side by their sample analogs. It will be proved (Appendix \ref{AppParetos}) that for $j=1,2,\ldots,m$
\begin{equation}\label{LimLogsEmp}
\log X_{n-k,n}(s_j)-  \log U_{s_j}\bigl( \frac{n}{k}\bigr) \conv{P} 0.
\end{equation}
as $n\rightarrow \infty$. Hence under condition \eqref{LimLogs}
\begin{equation*}
\sumab{j=1}{m} \Bigl(\log X_{n-k,n}(s_j)-\log X_{n-k,n}(0)- c\gamma s_j \Bigr)^2
\end{equation*}
should be small. This leads to the least squares estimator
\begin{equation}\label{Est1}
     \hat{c}^{(1)}:= \frac{\sumab{j=1}{m}s_j\,\bigl(\log X_{n-k,n}(s_j)-\log X_{n-k,n}(0)\bigr)}{\hat{\gamma}^+_{n,k}
     \sumab{j=1}{m}s_j^2},
\end{equation}
where $\hat{\gamma}^+_{n,k}$ is some estimator of $\max (\gamma,0)$ in section \ref{SecResults}. We shall discuss specific estimators of $\max (\gamma,0)$ in section \ref{SecResults} and the simulations section \ref{SecSims}.

 \item[(ii)] For $\gamma$ not restricted to be positive \eqref{UsU0Dif} leads to an estimator
  for $c$. Intuitively relation \eqref{UsU0Dif} means that
  \begin{equation*}
    \biggl( 1+\hat{\gamma}_{n,k}\frac{X_{n-k,n}(s_j)-X_{n-k,n}(0)}{\hat{a}_0\bigl( \ndivk \bigr)}\biggr)^{\frac{1}{\hat{\gamma}_{n,k}}}\approx e^{cs_j},
  \end{equation*}
  where $\hat{\gamma}_{n,k}$ is an estimator for $\gamma$ and $\hat{a}_0\bigl( n/k \bigr)$ is an estimator for $a_0\bigl(n/k \bigr)$.
  Define $\hat{c}^{(2)}$ by
  \begin{equation*}
    \argmin{c} \sumab{j=1}{m}\Bigl\{\log \Bigl( 1+\hat{\gamma}_{n,k}\frac{X_{n-k,n}(s_j)-X_{n-k,n}(0)}{\hat{a}_0\bigl( \ndivk
    \bigr)}\Bigr)^{\frac{1}{\hat{\gamma}_{n,k}}}-cs_j\Bigr\}^2
  \end{equation*}
  i.e.,
    \begin{equation}\label{Est2}
    \hat{c}^{(2)}:= \frac{\sumab{j=1}{m}s_j\,\log \Bigl( 1+\hat{\gamma}_{n,k}\,\frac{ X_{n-k,n}(s_j)-X_{n-k,n}(0)}{\hat{a}_{0}(n/k)}\Bigr)^{\frac{1}{\hat{\gamma}_{n,k}}}}{\sumab{j=1}{m} s_j^2}.
\end{equation}
For $\hat{\gamma}_{n,k}=0$, the estimator is defined by continuity. Again, specific (well-known) estimators $\hat{\gamma}_{n,k}$ and $\hat{a}_{0}(n/k)$ will be discussed in section \ref{SecResults}.

 \item[(iii)] Finally relation \eqref{TrendPropag} also leads to an
  estimator for $c$. Intuitively relation \eqref{TrendPropag} means that
    \begin{equation*}
        \log\,
        \frac{1-\widehat{F}_s\bigl(X_{n-k,n}(0)\bigr)}{1-\widehat{F}_0\bigl(X_{n-k,n}(0)\bigr)}
        \approx cs
    \end{equation*}
    where $\widehat{F}_s$ is the empirical distribution function at
    time $s$. Note that $1-\widehat{F}_0\bigl(X_{n-k,n}(0)\bigr)\,\approx\, k/n$. Hence the estimator:
    \begin{equation}\label{Est3}
    \hat{c}^{(3)}:= \frac{\sumab{j=1}{m}\log\Bigl(\frac{1}{k}\sumab{i=1}{n} I_{\{X_i(s_j)>X_{n-k,n}(0)\}}\Bigr)}{\sumab{j=1}{m}s_j}.
\end{equation}
\end{description}

The problem of defining and estimating a trend in extreme value theory has been considered by a number of authors including \citet{Smith:89, HallTajvidi:00, Coles:01, YeeStephenson:07} and more recently addressed by \citet{Mannshardtetal:10}. What distinguishes our approach from the traditional ones is (among others):
\begin{itemize}
\item The results are directly interpretable (it is about how probabilities of extreme events change over time).
\item Asymptotic justification: we prove that our estimators are not only valid when the observations come from an extreme value distribution but also under the more realistic assumption that they come from a distribution in the domain of attraction.
\item Some existing proposals are not completely satisfactory. A review and discussion of existing results is given in Appendix \ref{app}.
\end{itemize}

The outline of this paper is as follows. In section \ref{SecResults} consistency
and asymptotic normality of the estimators introduced in \eqref{Est1}, \eqref{Est2} and \eqref{Est3} is discussed. Proofs are postponed to
section \ref{SecProofs}. In section \ref{SecSims} we collect some
simulation results for illustrating and assessing finite sample performance of the
various estimators for the trend. In section \ref{SecData} we apply the
methods to daily rainfall at 18 stations across Germany and The
Netherlands and give a tentative interpretation of
the results. Indeed for some stations the probability of extreme rainfall has increased by about $2\%$ in each decade.

%==================================================================
\section{Results}
\label{SecResults}

{\bf (i)} Let us consider $\hat{c}^{(1)}$ first and suppose that $\gamma>0$. For part of our results we need a second order strengthening of conditions \eqref{TrendPropag} and \eqref{DOA}.

{\sc Condition A}\hspace{0.2cm}  Suppose there exists a
positive or negative function $\beta$ with $\lim_{t\rightarrow \infty}
\beta(t)=0$ such that for $x>0$
\begin{equation}\label{2ndRVU0}
    \limit{t}\frac{\frac{U_0(tx)}{U_0(t)}-x^{\gamma^+}}{\beta(t)}=
    x^{\gamma^+}\,\frac{x^{\widetilde{\rho}}-1}{\widetilde{\rho}}
\end{equation}
with $\widetilde{\rho}$ a non-positive parameter. Further we need a second order
strengthening of condition \eqref{Us2U0}: suppose that for all $j$
\begin{equation*}
    \limit{t}\frac{\frac{U_{s_j}(t)}{U_0(t)}-e^{c\gamma^+ s_j}}{\beta(t)}=
    e^{c\gamma^+ s_j}\frac{e^{c\widetilde{\rho} s_j}-1}{\widetilde{\rho}}.
\end{equation*}
Equivalently
\begin{equation}\label{2ndUs2U0}
	\limit{t} \frac{\log U_{s_j}(t)- \log U_0(t)-c\gamma^+ s_j}{\beta(t)}=\frac{e^{c\widetilde{\rho} s_j}-1}{\widetilde{\rho}}.
\end{equation}

We consider an estimator $\hat{\gamma}^+_{n,k}(s_j)$ of $\gamma^+$ that is
consistent i.e., $\hat{\gamma}^+_{n,k}(s_j)\conv{P}\gamma^+$ provided $k=k_n \rightarrow \infty$,
$k_n/n \rightarrow 0$. Furthermore we require that under condition A
\begin{equation}\label{BivANgammaPlus}
    \sqrt{k}\Bigl( \hat{\gamma}^+_{n,k}(s_j)-\gamma^+,\; \log X_{n-k,n}(s_j)-\log U_{s_j}\bigl(\ndivk \bigr)\Bigr) \conv{d} \bigl(\Gamma^+(s_j), \, B^+(s_j)
    \bigr),
\end{equation}
say, for all $j$, where $\bigl(\Gamma^+(s_j), \, B^+(s_j)\bigr)$ has
a multivariate normal distribution provided $k$ (the number of upper
order statistics used in $\hat{\gamma}^+_{n,k}(s_j)$ for all $j$)
satisfies $k= k_n \rightarrow \infty$ and
\begin{equation}\label{ConstBias}
    \limit{n} \sqrt{k_n}\, \beta\Bigl(\frac{n}{k_n} \Bigr) =: \lambda
\end{equation}
exists finite. Various estimators  $\hat{\gamma}^+_{n,k}(s_j)$
are known with this property, notably Hill's estimator (\citet{Hill:75}) as explained now:

%--------------------------- Remark --------------------------------------------
\begin{rem}
\label{RemHills}
	In sections \ref{SecSims} and \ref{SecData} we shall choose
	\begin{equation*}
		\hat{\gamma}_{n,k}(s_j):= \frac{1}{k} \sumab{i=0}{k-1} \log X_{n-i,n}(s_j)- \log X_{n-k,n}(s_j)
	\end{equation*}
(Hill's estimator). In that case
\begin{equation*}
\Bigl(\Gamma^+(s_j), \, B^+(s_j) \Bigr)\, \id \, \Bigl(\gamma \intunit \bigl(s^{-1}W(s)-W(1)\bigr)\, ds +\frac{\lambda}{1-\rho},\,W(1) \Bigr)
\end{equation*}
with $W$ Brownian motion, hence $\Gamma^+(s_j)$ and $B^+(s_j)$ are independent, $Var\bigl(\Gamma^+(s_j)\bigr)=\gamma^2$, $Var\bigl(B^+(s_j)\bigr)= 1$ \citep[][pages 52 and 76]{deHaanFerreira:06}.
\end{rem}

{\bf (ii)} Next we consider $\hat{c}^{(2)}$ (and $\hat{c}^{(3)}$). Again we need a second order strengthening of conditions \eqref{TrendPropag} and \eqref{DOA} for part of the results.

{\sc Condition B}\hspace{0.2cm}  Suppose there exists a positive or negative
  function $\alpha_0$ with $\lim_{t \rightarrow \infty}\alpha_0(t)=0$ such that
for each $x>0$
  \begin{equation}\label{2ndERVU0}
    \limit{t}\frac{\frac{U_{0}(tx)-U_{0}(t)}{a_0(t)}-\frac{x^{\gamma}-1}{\gamma}}{\alpha_0(t)}= \frac{1}{\rho}\biggl(
    \frac{x^{\gamma+\rho}-1}{\gamma+\rho}-\frac{x^{\gamma}-1}{\gamma}\biggr)=:H_{\gamma,\rho}(x)
\end{equation}
where $\rho$ is a non-positive parameter. For $\gamma=0$ and/or $\rho=0$ the limit function is defined by continuity. Further we need a strengthening of condition \eqref{TrendPropag} or rather \eqref{UsU0Dif}:
\begin{equation}\label{2ndUs2U0Dif}
    \limit{t}\frac{\frac{U_{s_j}(t)-U_{0}(t)}{a_0(t)}-\frac{e^{c\gamma s_j}-1}{\gamma}}{\alpha_0(t)}=H_{\gamma,\rho}(e^{c
    s_j}).
\end{equation}
Relations \eqref{2ndERVU0} and \eqref{2ndUs2U0Dif} imply that all functions $U_{s_j}$ satisfy a second order relation (cf. Lemma \ref{LemERVUs} below).
We consider estimators
$\hat{\gamma}_{n,k}(s_j)$ and $\hat{a}_{s_j}(n/k)$ that are consistent i.e.,
  \begin{equation}\label{ConsistHillandScale}
    \hat{\gamma}_{n,k}(s_j) \conv{P} \gamma, \quad
    \frac{\hat{a}_{s_j}\bigl(\ndivk \bigr)}{a_{s_j}\bigl(\ndivk \bigr)} \conv{P}
    1
  \end{equation}
provided $k=k_n \rightarrow \infty$, $k_n/n \rightarrow 0$,
$n\rightarrow \infty$.
Furthermore we require that under condition B
\begin{eqnarray*}
   & &  \sqrt{k}\biggl( \hat{\gamma}_{n,k}(s_j)-\gamma,\; \frac{\hat{a}_{s_j}\bigl(\ndivk \bigr)}{a_{s_j}\bigl(\ndivk \bigr)}-1,\;\frac{X_{n-k,n}(s_j)-U_{s_j}(n/k)}{a_{s_j}\bigl(\ndivk\bigr)}\biggr)\\
   &\conv{d}& \bigl(\Gamma(s_j), \, A(s_j),\,B(s_j)\bigr),
\end{eqnarray*}
say, where $\bigl(\Gamma(s_j),A(s_j),\,B(s_j)\bigr)$, $j=1,2,\ldots,m$, are independent random vectors and have a
multivariate normal distribution for each $j$  provided $k=k_n \rightarrow
\infty$, and
\begin{equation*}
    \limit{n} \sqrt{k_n}\, \alpha_0\Bigl(\frac{n}{k_n} \Bigr) =: \lambda
\end{equation*}
exists finite. Various estimators are known with these properties, in particular the ones we mention now:

%--------------------------- Remark --------------------------------------------
\begin{rem}\label{RemCovariances}
	In sections \ref{SecSims} and \ref{SecData} we shall choose the moment estimator for $\hat{\gamma}_{n,k}$ and the associated scale estimator \citep[(3.5.9) p.102 and (4.2.4) p.130][]{deHaanFerreira:06}  for $\hat{a}_{s_j}(n/k)$. In this case, if $\lim_{n\rightarrow \infty} \sqrt{k}\, \beta(n/k)=0$, $B(s_j)$ and $\bigl(\Gamma(s_j),\,A(s_j)\bigr)$ are independent, $Var\bigl(B(s_j)\bigr)= 1$,
\begin{equation*}
Var\bigl(\Gamma(s_j)\bigr)= \left\{
                    \begin{array}{ll}
                       \gamma^2+1, & \mbox{ }\gamma \geq 0    \\
                      \frac{(1-\gamma)^2(1-2\gamma)(1-\gamma+6\gamma^2)}{(1-3\gamma)(1-4\gamma)} , & \mbox{ } \gamma <0
                    \end{array}
                  \right.
\end{equation*}
\begin{equation*}
Var\bigl(A(s_j)\bigr)= \left\{
                    \begin{array}{ll}
                       \gamma^2+2, & \mbox{ } \gamma \geq 0   \\
                       \frac{2-16\gamma+51\gamma^2-69\gamma^3+50\gamma^4-24\gamma^5}{(1-2\gamma)(1-3\gamma)(1-4\gamma)}, & \mbox{ } \gamma<0
                    \end{array}
                  \right.
\end{equation*}
and
\begin{equation*}
Cov\bigl(\Gamma(s_j), \, A(s_j)\bigr)= \left\{
                    \begin{array}{ll}
                       \gamma-1, & \mbox{ }\gamma \geq 0\\
                       \frac{(1-\gamma)^2(-1+4\gamma-12\gamma^2)}{(1-3\gamma)(1-4\gamma)}, & \mbox{ } \gamma <0
                    \end{array}
                  \right.
\end{equation*}
(pages 104, 131, 133 respectively \citet{deHaanFerreira:06}; the asymptotic biases -- in case $\lambda \neq 0$ in \eqref{ConstBias} -- can be found on the same pages).
\end{rem}

We have the following results.

%----------------------------- Theorem -----------------------------------------
\begin{thm}\label{ThmMain}
  1. Under conditions \eqref{TrendPropag} and \eqref{DOA}
   \begin{equation*}
    \hat{c}^{(1)} \conv{P} c.
  \end{equation*}
    Under condition A
        \begin{equation*}
        \sqrt{k}\,\bigl(\hat{c}^{(1)}-c \bigr) \conv{d}  \frac{\sumab{j=1}{m}s_j \Bigl(\frac{ B^+(s_j)-B^+(0)}{\gamma^+}+\frac{e^{c\widetilde{\rho} s_j}-1}{\widetilde{\rho} \gamma^+}\,\lambda\Bigr)}{\sumab{j=1}{m}s^2_j}-\frac{c}{
        \gamma^+}\,\frac{1}{m}\sumab{j=1}{m}\Gamma^+(s_j).
    \end{equation*}
  2. Under conditions \eqref{TrendPropag} and \eqref{DOA}
  \begin{equation*}
    \hat{c}^{(r)} \conv{P} c \quad \mbox{ for }\; r=2,\,3.
  \end{equation*}
    Under condition B
    \begin{eqnarray*}
        & &\sqrt{k}\,\bigl(\hat{c}^{(2)}-c \bigr)\conv{d} \sumab{j=1}{m}s_j\Bigl\{\frac{1-e^{-c\gamma s_j}-c \gamma s_j}{\gamma^2}\, \frac{1}{m}\sumab{i=1}{m}\Gamma(s_i) \\
        & & \mbox{\hspace{0.1cm} }+B(s_j)-e^{-c \gamma s_j}B(0)-\frac{1-e^{-c \gamma s_j}}{\gamma}\,A(0) + \lambda \,e^{-c \gamma s_j} H_{\gamma, \rho}(e^{c s_j})\Bigr\}\Big/\sumab{j=1}{m}s^2_j
    \end{eqnarray*}
    and
    \begin{equation*}
        \sqrt{k}\,\bigl(\hat{c}^{(3)}-c \bigr) \conv{d} \sumab{j=1}{m}\Bigl\{e^{-cs_j}W^{(s_j)}(e^{cs_j})- W^{(0)}(1)+ \lambda\,b_3(s_j)\Bigr\}\Big/\sumab{j=1}{m}s_j,
    \end{equation*}
    where $\{W^{(s_j)}(t)\}_{t \geq 0}$ are independent standard Brownian motions
    $(j=1,2,\ldots,m)$ and 
    \begin{equation*}
        b_3(s_j)= \left\{
                    \begin{array}{ll}
                       -\frac{e^{-cs_j\rho}+1}{\rho(\gamma +\rho)}, & \mbox{ } \gamma +\rho \neq 0, \,\rho<0,\\
                       -cs_j\frac{e^{cs_j\gamma}+1}{\gamma}, & \mbox{ } \gamma +\rho = 0, \,\rho<0,\\
                      \frac{2cs_j}{ \gamma}, & \mbox{ } \rho=0\neq \gamma,\\
                       -(cs_j)^2, & \mbox{ } \gamma=\rho=0.
                    \end{array}
                  \right.
\end{equation*}

\end{thm}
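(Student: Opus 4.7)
The overall strategy is to linearize each estimator around its target $c$ in terms of three groups of random quantities: (a) the centered extreme order statistics $\log X_{n-k,n}(s_j)-\log U_{s_j}(n/k)$ or $(X_{n-k,n}(s_j)-U_{s_j}(n/k))/a_{s_j}(n/k)$, whose joint behaviour is encoded by $B^+(s_j)$ in \eqref{BivANgammaPlus} and $B(s_j)$ under Condition B; (b) the estimator errors $\hat\gamma-\gamma$ and $\hat a_0(n/k)/a_0(n/k)-1$, producing the $\Gamma$ and $A$ terms; and (c) deterministic bias contributions of order $\beta(n/k)$ or $\alpha_0(n/k)$ coming from the second-order refinements \eqref{2ndUs2U0}, \eqref{2ndERVU0}, \eqref{2ndUs2U0Dif}, which yield the $\lambda$-multiples in the limits. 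Consistency follows in each case by retaining only the first-order conditions together with \eqref{LimLogsEmp}.

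For $\hat c^{(1)}$, I would write
$$\log X_{n-k,n}(s_j)-\log X_{n-k,n}(0)-c\gamma s_j=\bigl[\log X_{n-k,n}(s_j)-\log U_{s_j}(n/k)\bigr]-\bigl[\log X_{n-k,n}(0)-\log U_0(n/k)\bigr]+\bigl[\log U_{s_j}(n/k)-\log U_0(n/k)-c\gamma s_j\bigr].$$
Multiplied by $\sqrt k$, the first two bracketed terms converge jointly to $B^+(s_j)-B^+(0)$ by \eqref{BivANgammaPlus}, while the third yields the bias $\lambda(e^{c\widetilde\rho s_j}-1)/\widetilde\rho$ from \eqref{2ndUs2U0}. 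Summing against $s_j$ produces the numerator of the stated limit. The denominator is then handled by the algebraic identity $\sqrt k(\hat c^{(1)}-c)=(\hat\gamma^+_{n,k})^{-1}\{\sum_j s_j\sqrt k[\log X_{n-k,n}(s_j)-\log X_{n-k,n}(0)-c\gamma^+ s_j]/\sum_j s_j^2\}-c(\hat\gamma^+_{n,k})^{-1}\sqrt k(\hat\gamma^+_{n,k}-\gamma^+)$; taking $\hat\gamma^+_{n,k}$ to be the mean of the per-time Hill estimators $\hat\gamma^+_{n,k}(s_j)$ produces the closing term $-c(\gamma^+)^{-1}m^{-1}\sum_j\Gamma^+(s_j)$.

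For $\hat c^{(2)}$, set $\tilde W_j=(X_{n-k,n}(s_j)-X_{n-k,n}(0))/\hat a_0(n/k)$, $Z_j=\hat\gamma_{n,k}\tilde W_j$, $w_j^*=(e^{c\gamma s_j}-1)/\gamma$, $z_j^*=e^{c\gamma s_j}-1$, and $f(\gamma,z)=\gamma^{-1}\log(1+z)$. A joint Taylor expansion of $f$ at $(\gamma,z_j^*)$ gives $\partial_\gamma f=-cs_j/\gamma$ and $\partial_z f=e^{-c\gamma s_j}/\gamma$. Writing $Z_j-z_j^*=(\hat\gamma_{n,k}-\gamma)w_j^*+\gamma(\tilde W_j-w_j^*)+o_P(1/\sqrt k)$ and substituting, the coefficient of $\hat\gamma_{n,k}-\gamma$ in $f(\hat\gamma_{n,k},Z_j)-cs_j$ collapses to $(1-e^{-c\gamma s_j}-c\gamma s_j)/\gamma^2$. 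The limit of $\sqrt k(\tilde W_j-w_j^*)$ comes from decomposing $X_{n-k,n}(s_j)-X_{n-k,n}(0)$ as in (a), dividing by $a_0(n/k)$, applying \eqref{ScaleFct}, \eqref{2ndUs2U0Dif} and the bivariate convergence, and finally factoring out $\hat a_0(n/k)/a_0(n/k)=1+A(0)/\sqrt k+o_P(1/\sqrt k)$, which generates the $-(1-e^{-c\gamma s_j})A(0)/\gamma$ term. Summing against $s_j$ and dividing by $\sum_j s_j^2$ yields the claim.

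For $\hat c^{(3)}$, set $T_j=k^{-1}\sum_{i=1}^n I\{X_i(s_j)>X_{n-k,n}(0)\}$ and, conditionally on $u=X_{n-k,n}(0)$, split
$$\sqrt k\bigl(T_j-e^{cs_j}\bigr)=\sqrt k\Bigl(T_j-\tfrac{n}{k}\bigl(1-F_{s_j}(u)\bigr)\Bigr)+\sqrt k\Bigl(\tfrac{n}{k}\bigl(1-F_{s_j}(u)\bigr)-e^{cs_j}\Bigr).$$
For $j\ge 1$ the sample at time $s_j$ is independent of the sample at $s_0=0$, so the first summand is a standardized centered binomial which, by the conditional CLT, converges to $W^{(s_j)}(e^{cs_j})$; independence of the $W^{(s_j)}$ across $j$ reflects the independence of the samples. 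The second summand is factored as $(n/k)(1-F_0(u))\cdot(1-F_{s_j}(u))/(1-F_0(u))$: the first factor satisfies $\sqrt k((n/k)(1-F_0(u))-1)\to -W^{(0)}(1)$ by the classical tail-empirical quantile asymptotic, while the second, via inversion of the second-order relations in Condition B from the $U_s$-side to the $1-F_s$-side, yields the bias that after multiplication by $e^{-cs_j}$ is $\lambda b_3(s_j)$, with the four cases reflecting the singular configurations $\rho=0$ and $\gamma+\rho=0$. A first-order delta-method step $\sqrt k(\log T_j-cs_j)=e^{-cs_j}\sqrt k(T_j-e^{cs_j})+o_P(1)$, summation over $j$ and division by $\sum_j s_j$ complete the argument. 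The main technical obstacle is item (c), and in particular the case analysis for $b_3$: translating the second-order condition on $U_s$ into one on $1-F_s$ near $x^*$ requires careful expansion of the inverse function near the critical configurations. Bookkeeping the joint independence of $(\Gamma(s_j),A(s_j),B(s_j))$ across $j$ through the algebraic manipulations is also a point of care, though routine given the assumed joint convergence.
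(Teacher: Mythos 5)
Your treatment of $\hat{c}^{(1)}$ and $\hat{c}^{(2)}$ is essentially the paper's own proof: the exact algebraic identity isolating the $\sqrt{k}(\hat{\gamma}^+_{n,k}-\gamma^+)$ contribution, the three-way split of $\log X_{n-k,n}(s_j)-\log X_{n-k,n}(0)-c\gamma^+ s_j$ into two centered order statistics plus the deterministic second-order remainder from \eqref{2ndUs2U0}, and the bivariate delta method for $(\gamma,x)\mapsto \log(1+\gamma x)^{1/\gamma}$ with the decomposition of $(X_{n-k,n}(s_j)-X_{n-k,n}(0))/\hat{a}_0(n/k)$ via \eqref{ScaleFct} and \eqref{2ndUs2U0Dif} all coincide with what is done in Section \ref{SecProofs}; your partial derivatives and the resulting coefficients $(1-e^{-c\gamma s_j}-c\gamma s_j)/\gamma^2$ and $e^{-c\gamma s_j}$ check out. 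For $\hat{c}^{(3)}$ you take a mildly different route: you condition on $u=X_{n-k,n}(0)$ and split on the probability scale, centering $T_j$ at $\frac{n}{k}(1-F_{s_j}(u))$ and factoring the latter through $\frac{n}{k}(1-F_0(u))\cdot\frac{1-F_{s_j}(u)}{1-F_0(u)}$, whereas the paper stays on the quantile scale, writes $T_j$ as the tail empirical process of the $s_j$-sample evaluated at $x_n(s)$ with the second-order-adjusted normalizers $U_s^{\star}, a_s^{\star}, \alpha_s^{\star}$, and reads off both the Gaussian term and the bias $\overline{\Psi}_{\gamma,\rho}$ from Theorem 5.1.2 of \citet{deHaanFerreira:06} together with Lemma \ref{LemERVUs}. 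Your decomposition cleanly produces $W^{(s_j)}(e^{cs_j})$ (conditional binomial CLT, with independence across $j$ and from $W^{(0)}$ because $u$ is asymptotically constant) and $-W^{(0)}(1)$, and it is a legitimate alternative.

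The one genuine gap is the bias term $b_3(s_j)$. Your route requires converting the second-order conditions \eqref{2ndERVU0} and \eqref{2ndUs2U0Dif}, which are stated for the quantile functions $U_s$, into second-order expansions of $\frac{n}{k}(1-F_0(X_{n-k,n}(0)))$ and of the ratio $(1-F_{s_j}(x))/(1-F_0(x))$ near $x^*$; this inversion is precisely where the four cases ($\gamma+\rho\neq 0$ with $\rho<0$, $\gamma+\rho=0$ with $\rho<0$, $\rho=0\neq\gamma$, $\gamma=\rho=0$) and the constant $\lambda^{\star}$ versus $\lambda$ bookkeeping arise, and you assert rather than derive it. In the paper this work is distributed over three explicit contributions (the limit $b(s)$ of the second term of $x_n(s)$, the third term involving $cs\,I_{\{\rho=0\}}$, and the $\overline{\Psi}_{\gamma,\rho}(e^{-cs})$ term from the tail empirical process), which are then combined by two applications of the delta method to give $b_3$. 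Without carrying out that inversion, or an equivalent computation, the stated formula for $b_3(s_j)$ is not established; everything else in your proposal is sound.
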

%------------------------------------------------------------

%----------------------------- Remark ------------------------------------------
\begin{rem}\label{RemAsymptVar}
When choosing the estimators of the index and scale according to Remarks \ref{RemHills} and \ref{RemCovariances} we get:

\noindent the variance of the limit distribution of $\sqrt{k}\bigl(\hat{c}^{(1)}-c\bigr)$ is
\begin{equation*}
	\frac{1}{\Bigl( \sumab{j=1}{m}s_j^2\Bigr)^2}\,\frac{1}{(\gamma^+)^2}\biggl\{ \sumab{j=1}{m}s_j^2+\Bigl( \sumab{j=1}{m}s_j\Bigr)^2\biggr\}+ \frac{c^2}{m},
\end{equation*}
the variance of the limit distribution of $\sqrt{k}\bigl(\hat{c}^{(3)}-c\bigr)$ is
\begin{equation*}
	\frac{\sumab{j=1}{m}\bigl( 1+e^{-cs_j}\bigr)}{\Bigl(\sumab{j=1}{m}s_j\Bigr)^2}
\end{equation*}
and the variance of the limit distribution of $\sqrt{k}\bigl(\hat{c}^{(2)}-c\bigr)$ is
\begin{equation*}
	\frac{\Bigl(\sumab{j=1}{m}s_j\,\frac{1-e^{-c\gamma s_j}-c\gamma s_j}{\gamma^2}\Bigr)^2\frac{Var \bigl(\Gamma(0)\bigr)}{m}+ \sumab{j=1}{m}s_j^2+\Bigl(\sumab{j=1}{m}s_je^{- c\gamma s_j}\Bigr)^2+\Bigl(\sumab{j=1}{m}s_j\,\frac{1-e^{-c\gamma s_j}}{\gamma}\Bigr)^2 Var\bigl(A(0)\bigr)}{\Bigl(\sumab{j=1}{m}s_j^2\Bigr)^2}
\end{equation*}
with the variances of $\Gamma(0)$ and $A(0)$ as in Remark \ref{RemCovariances}.
Figure \ref{Fig.Var1} and the close-up Figure \ref{Fig.Var2} offer a comparison of the three variances for $\gamma=0.1$ which is close to the value of $\gamma$ that plays a role in the application section \ref{SecData}. Figure  \ref{Fig.Var3} depicts the three variances for $\gamma=0.5$.
\end{rem}
\begin{figure}[h]
\caption{\footnotesize Asymptotic variances from Remark \ref{RemAsymptVar} with equal lengths, i.e. $s_j=j/m$, $j=1,2,\ldots,m$, and $m=17$ for several values of $c$.} 
\begin{center}
\includegraphics*[scale=0.42]{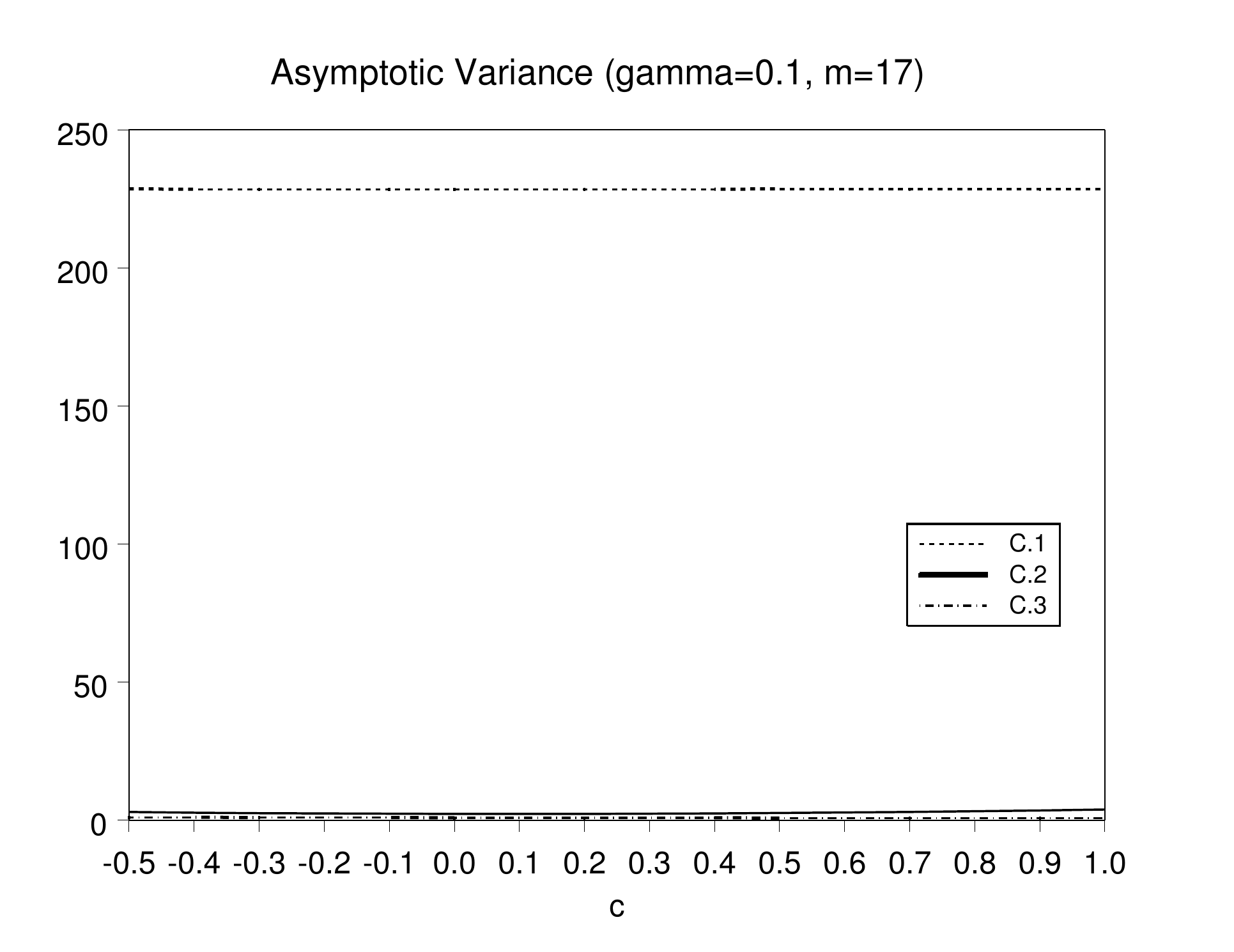}
\label{Fig.Var1}
\end{center}
\end{figure}
\begin{figure}[h]
\caption{\footnotesize Close-up of the asymptotic variances in Figure \ref{Fig.Var1}.} 
\begin{center}
\includegraphics*[scale=0.42]{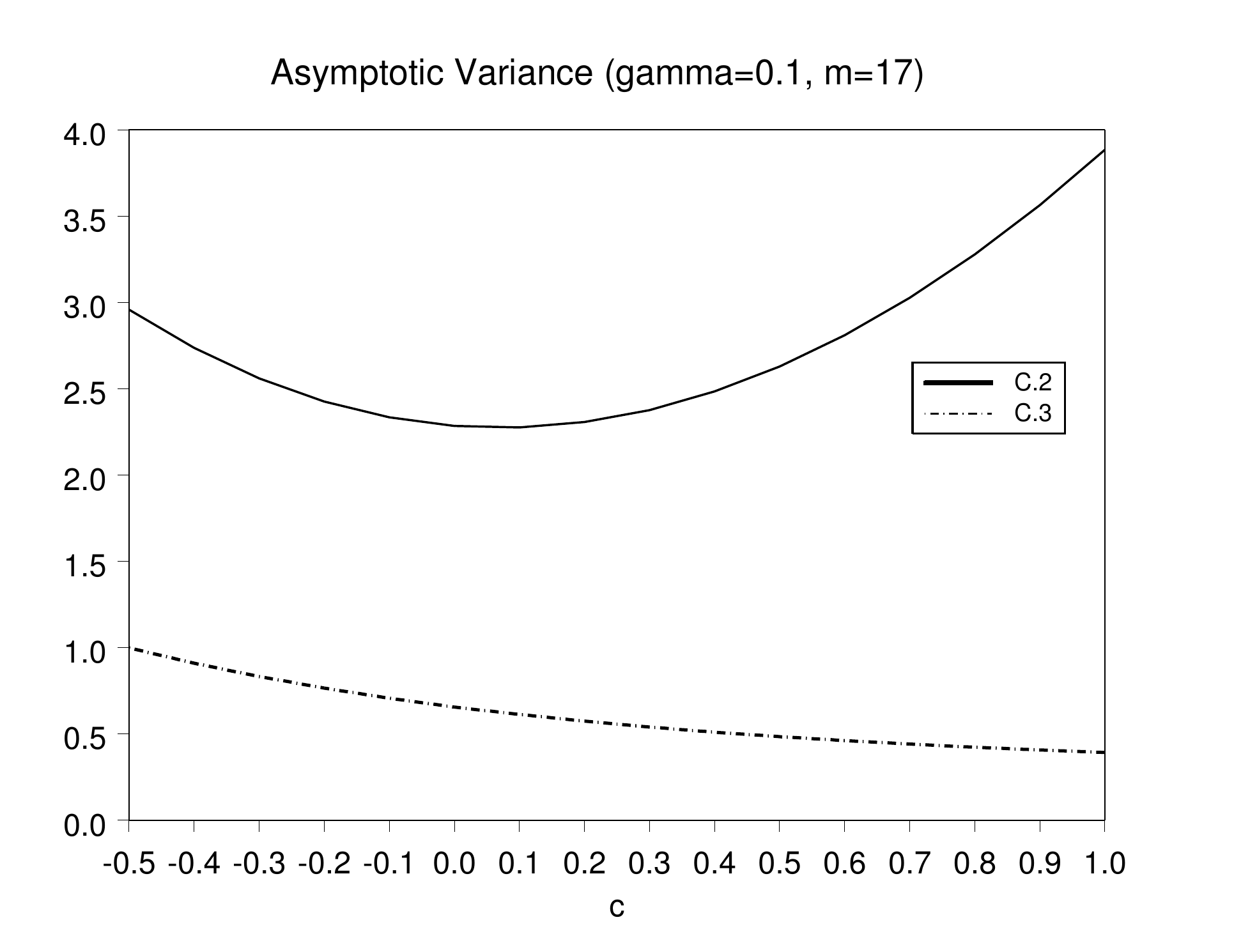}
\label{Fig.Var2}
\end{center}
\end{figure}
\begin{figure}
\caption{\footnotesize Asymptotic variances from Remark \ref{RemAsymptVar} with equal lengths, i.e. $s_j=j/m$, $j=1,2,\ldots,m$, and $m=17$ for several values of $c$.} 
\begin{center}
\includegraphics*[scale=0.42]{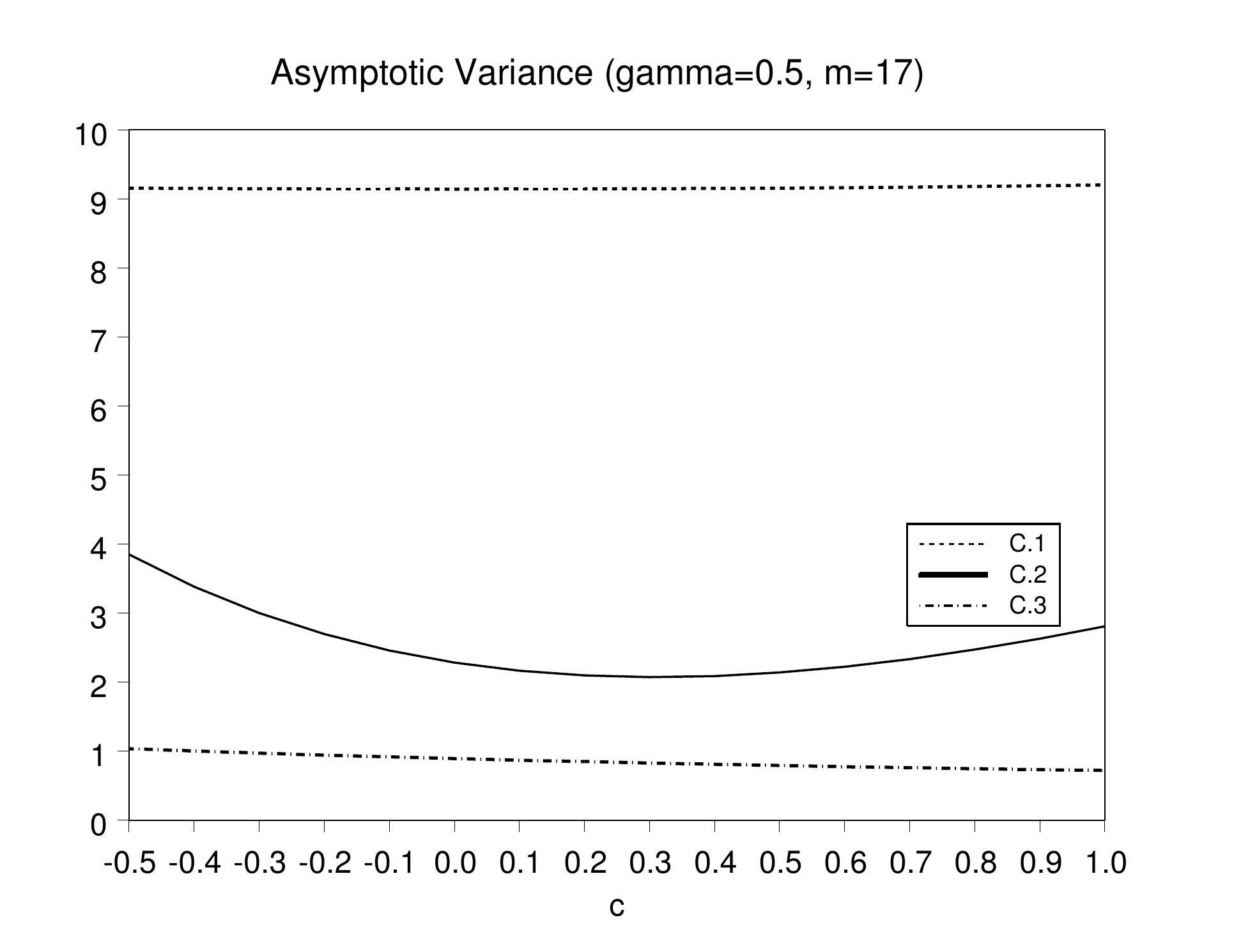}
\label{Fig.Var3}
\end{center}
\end{figure}

%----------------------------- Corollary -----------------------------------------
\begin{cor}\label{CorTest}
Assume $c=0$. Under the conditions of the Theorem,
\begin{enumerate}
  \item if $k=k_n$ is
such that $\sqrt{k}\,\beta(n/k) \rightarrow 0$, as $n\rightarrow \infty$
then
    \begin{equation}\label{TestStat1}
        Q^{(1)}_{m,n}:= \sumab{j=1}{m}\frac{k}{2}\, \biggl\{ \frac{\log X_{n-k,n}(s_j)-\log
        X_{n-k,n}(0)}{\hat{\gamma}^+_{n,k}}\biggr\}^2 \conv{d}
        \chi^2(m);
    \end{equation}
  \item if $k=k_n$ is
such that $\sqrt{k}\,\alpha_0(n/k) \rightarrow 0$, as $n\rightarrow \infty$
then
  \begin{equation}\label{TestStat2}
    Q^{(2)}_{m,n}:= \sumab{j=1}{m}\frac{k}{2}\, \biggl\{\frac{1}{k} \sumab{i=1}{n}I_{\{X_i(s_j)>X_{n-k,n}(0)\}}-1\biggr\}^2 \conv{d}
    \chi^2(m).
  \end{equation}
\end{enumerate}
Here $\chi^2(m)$ is a standard chi-squared distributed random variable
with $m$ degrees of freedom.
\end{cor}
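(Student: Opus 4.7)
I would treat this corollary as the null-case ($c=0$) specialisation of the asymptotic normality results of Theorem~\ref{ThmMain}, with the extra bias hypotheses $\sqrt{k}\,\beta(n/k)\to 0$ for (i) and $\sqrt{k}\,\alpha_0(n/k)\to 0$ for (ii) eliminating the $\lambda$-biased drift terms that appear there.

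For (i), the second-order condition \eqref{2ndUs2U0} evaluated at $c=0$ collapses to $\sqrt{k}(\log U_{s_j}(n/k)-\log U_0(n/k))=O(\sqrt{k}\,\beta(n/k))=o(1)$. I would combine this with the joint convergence \eqref{BivANgammaPlus}, applied separately at each $s_j$, and invoke independence of the $m+1$ samples in time to obtain, jointly for $j=1,\ldots,m$,
\[
\sqrt{k}\bigl(\log X_{n-k,n}(s_j)-\log X_{n-k,n}(0)\bigr)\conv{d} B^+(s_j)-B^+(0),
\]
where $B^+(s_0),\ldots,B^+(s_m)$ are mutually independent Gaussians whose common law is described in Remark~\ref{RemHills}. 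Consistency of $\hat{\gamma}^+_{n,k}$ together with Slutsky's theorem then replaces the divisor by $\gamma^+$ in the limit, and a continuous-mapping step (squaring and summing) exhibits $Q^{(1)}_{m,n}$ as a quadratic form in jointly Gaussian variables. The last task is to identify this quadratic form with $\chi^2(m)$.

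For (ii), a parallel reduction uses the Bernoulli structure of the empirical tail at the random threshold $X_{n-k,n}(0)$. Under $c=0$ we have $1-F_{s_j}=1-F_0$, and conditionally on the sample at time $s_0$ the counts $N_j:=\sum_{i=1}^n I\{X_i(s_j)>X_{n-k,n}(0)\}$ for $j=1,\ldots,m$ are independent $\operatorname{Bin}(n,p_n)$ with $p_n=1-F_0(X_{n-k,n}(0))$. I would decompose
\[
\sqrt{k}\bigl(k^{-1}N_j-1\bigr)=\sqrt{k}\bigl(k^{-1}N_j-p_n\bigr)+\sqrt{k}(p_n-1)
\]
and apply the conditional binomial CLT to the first summand together with the standard intermediate-order-statistic asymptotics (giving $\sqrt{k}(np_n/k-1)\conv{d}$ a standard normal) to the second. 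The hypothesis $\sqrt{k}\,\alpha_0(n/k)\to 0$ kills the bias otherwise introduced by the deviation of $X_{n-k,n}(0)$ from the deterministic quantile $U_0(n/k)$. Continuous mapping then presents $Q^{(2)}_{m,n}$ as a quadratic form in a jointly Gaussian vector.

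The distributional heart of the corollary, and the step I expect to demand the most care, is the identification of these quadratic forms with $\chi^2(m)$. In each part it reduces to a linear-algebraic calculation on the asymptotic covariance matrix of the vector of summands: once its eigenstructure has been diagonalised and the asymptotic independence of the baseline randomness (through $X_{n-k,n}(0)$, respectively through $p_n$) from the increments at positive times has been properly exploited, the $m$ nontrivial eigenvalues should combine to the required sum of squared standard normals. It is at this step that the precise normalisations by $\tfrac{k}{2}$ and by $\hat{\gamma}^+_{n,k}$ come into play, so I would leave this linear-algebraic step for last and verify it by hand rather than by appeal to a general principle.
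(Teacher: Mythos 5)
Your route is in substance the paper's own: specialise the joint asymptotic normality underlying Theorem~\ref{ThmMain} to $c=0$, use the bias conditions to kill the $\lambda$-terms, and then identify the law of the resulting quadratic form in an asymptotically Gaussian vector (the paper compresses this last step into a one-line appeal to Rao's theorem on quadratic forms). The genuine gap is that you postpone precisely that identification step, and it does not go through as announced. Your own (correct) reduction produces, in part (i), a vector $V=(V_1,\dots,V_m)$ with $V_j$ the limit of $\sqrt{k}\,(\log X_{n-k,n}(s_j)-\log X_{n-k,n}(0))/\hat{\gamma}^+_{n,k}$, built from $m+1$ independent Gaussians $B^+(s_0),\dots,B^+(s_m)$; after the pivotal normalisation its covariance matrix is $\Sigma=I_m+J_m$ with $J_m=\mathbf{1}\mathbf{1}^{\top}$ (variance $2$ on the diagonal, covariance $1$ off it, the off-diagonal part coming from the shared baseline $X_{n-k,n}(0)$). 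The statistic is $V^{\top}AV$ with $A=\tfrac12 I_m$, and Rao's criterion demands that $A\Sigma$ be idempotent; but $A\Sigma=\tfrac12(I_m+J_m)$ satisfies $(A\Sigma)^2=\tfrac14\bigl(I_m+(m+2)J_m\bigr)\neq A\Sigma$ for $m\ge 2$. Diagonalising $\Sigma$ (eigenvalues $m+1$ once and $1$ with multiplicity $m-1$) shows the limit of $Q^{(1)}_{m,n}$ is $\tfrac{m+1}{2}\,\chi^2(1)+\tfrac12\,\chi^2(m-1)$ with independent summands: correct mean $m$, but not $\chi^2(m)$. The identical covariance structure arises in part (ii), where the shared randomness enters through the common threshold, i.e.\ through your $p_n$. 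So the ``verify by hand'' computation you defer would refute, not confirm, the identification; to reach $\chi^2(m)$ one needs the form $V^{\top}\Sigma^{-1}V=\sum_{j=1}^{m}V_j^2-\tfrac{1}{m+1}\bigl(\sum_{j=1}^{m}V_j\bigr)^2$ rather than $\tfrac12\sum_{j=1}^{m}V_j^2$. Until that linear algebra is actually carried out (and reconciled with the $\tfrac{k}{2}$ normalisation), the proposal does not establish the corollary — and note that the paper's own proof, which cites Rao's theorem without checking idempotency, offers you no shortcut here.

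Two smaller points. Under $c=0$, \eqref{TrendPropag} gives only tail equivalence $\lim_{x\uparrow x^*}(1-F_{s_j}(x))/(1-F_0(x))=1$, not $F_{s_j}=F_0$ as you assert in part (ii); the conditional binomial argument survives, but the success probabilities $1-F_{s_j}(X_{n-k,n}(0))$ differ across $j$, and their agreement to order $o(k^{-1/2})$ is exactly what the hypothesis $\sqrt{k}\,\alpha_0(n/k)\to 0$ buys. The Slutsky step replacing $\hat{\gamma}^+_{n,k}$ by $\gamma^+$ is fine as you describe it.
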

%------------------------------------------------------------------------------
Corollary \ref{CorTest} gives rise to a testing procedure for
detecting the presence of a trend in the tail of the underlying
distribution functions $F_s$ all lying in the same domain of
attraction. That is, $Q^{(r)}_{m,n}$, $r=1,\,2$, defined above can be used as test
statistics to evaluate the null hypothesis $H_0:\,c=0$ against the
alternative $H_1:\,c\neq 0$. Whence $H_0$ should be
rejected at a significance level $\alpha \in (0,1)$ for any
observed value of $Q^{(r)}_{m,n}$ verifying $Q^{(r)}_{obs}>q_{1-\alpha}(m)$, the
latter being the $(1-\alpha)$-quantile pertaining to the chi-squared
distribution with $m$ degrees of freedom.

%===========================================================================================

\section{Simulations}
\label{SecSims}

Simulations have been carried out for three distributions: (i) the generalized Pareto distribution; (ii) the ordinary Pareto distribution and (iii) the Cauchy distribution. The number of locations is $200$ (i.e.\ $m=200$) with $s_j=j/m$, $j=1,2,\ldots,m$. At each location there are $500$ i.i.d. observations ($n=500$). Then there are $1000$ replications which serve to obtain the means of $\hat{c}^{(r)}$, $r=1,2,3$ as a function of the number $(k+1)$ of tail related observations.

By definition, the finite sample behavior of $\hat{c}^{(1)}$ and $\hat{c}^{(2)}$ is inexorably attached to the estimation of the extreme value index $\gamma$. The parameter $\gamma$, which can be seen as a gauge of tail heaviness of the underlying distribution function $F_s$ is thus an important design factor in the present numerical study. Since $\gamma$ does not depend on $s$, we use combined estimators
\begin{equation}\label{GammaOverall}
	\widehat{\gamma^+}=\hat{\gamma}^+_{n,k}:= \frac{1}{m}\sumab{j=1}{m}\hat{\gamma}^+_{n,k}(s_j) \quad \mbox{ and }\quad \hat{\gamma}=\hat{\gamma}_{n,k}:= \frac{1}{m}\sumab{j=1}{m}\hat{\gamma}_{n,k}(s_j)
\end{equation}
in accordance with Remarks \ref{RemHills} and \ref{RemCovariances}. Estimator $\hat{c}^{(3)}$ is a shift invariant estimator not depending on $\gamma$. But the second order parameter $\rho\leq 0$ also plays a relevant in the performance evaluation of the three estimators since it contributes for the (second order) dominant component of the asymptotic bias. In the present framework, providing a full array of combinations of parameter values $c,\, \gamma$ and $\rho$, for a various number of time points $m$, would make a simulation study quite cumbersome and ultimately of unenthusiastic reading. For the sake of brevity, we have settled with $\gamma=-0.1, 0.1$ and $0.5$ (since $0.1$ is a typical value for rainfall, the application topic).

{\bf (i)} The generalized Pareto distribution (GPD) with distribution function $1-(1+\gamma x)^{-1/\gamma}$ for those $x$ for which $1+\gamma x >0$ has been considered. Relation \eqref{2ndERVU0} holds with limit zero since the left hand-side is zero (exact fit). In this case, the values $c=-0.1$ and $c=0.1$ have been considered.

The starting point is a r.v. $X$ from the GPD distribution. For each location $s_j$ we then take \linebreak $X(s_j)\, \id \, e^{cs_j\gamma}X + \bigl(e^{cs_j\gamma}-1\bigr)/\gamma$. That way the relations \eqref{TrendPropag} and  \eqref{2ndUs2U0Dif} hold.

Figure \ref{Fig.Sim1} displays the average (over $1000$ replications) values of the estimators $\hat{c}^{(1)}$ (only for positive $\gamma$), $\hat{c}^{(2)}$ and $\hat{c}^{(3)}$ as functions of the number $k$ of upper order statistics above $X_{n-k,n}(s_j)$ for all $s_j$. As usual in graphs of this type there is a stretch of the graph that is more or less straight; the idea is that in that part both the variance and bias are not too high.
We note that if, by the one hand, only a very tiny sample fraction $(k/n)$ is selected then huge variance arises; on the other hand, if we get further into the original sample by selecting a very large number $k$ of upper order statistics then bias increases. This sort of bias/variance trade-off is a common requirement in extreme value statistics.
\begin{figure}
\caption{\footnotesize Estimated means of $\hat{c}^{(r)}$, $r=1,2,3$, plotted against the same number $k$ of top observations on each location $s_j=j/m$, $j=1,2,\ldots,m$, with underlying Generalized Pareto distribution, in either
case of true value $c=-0.1$ or $c=0.1$.} 
\begin{center}
\includegraphics*[scale=0.42]{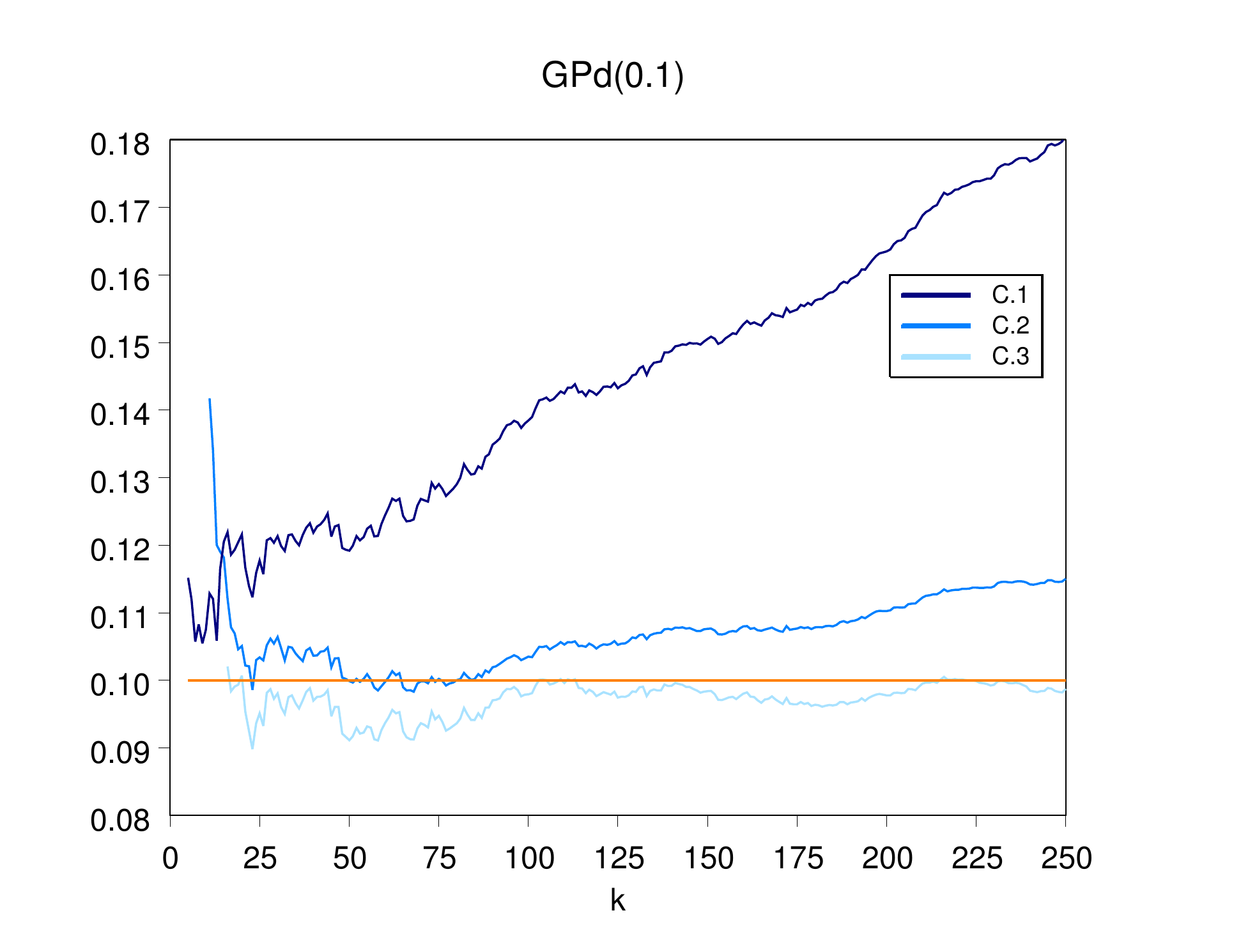}\hfill
\includegraphics*[scale=0.42]{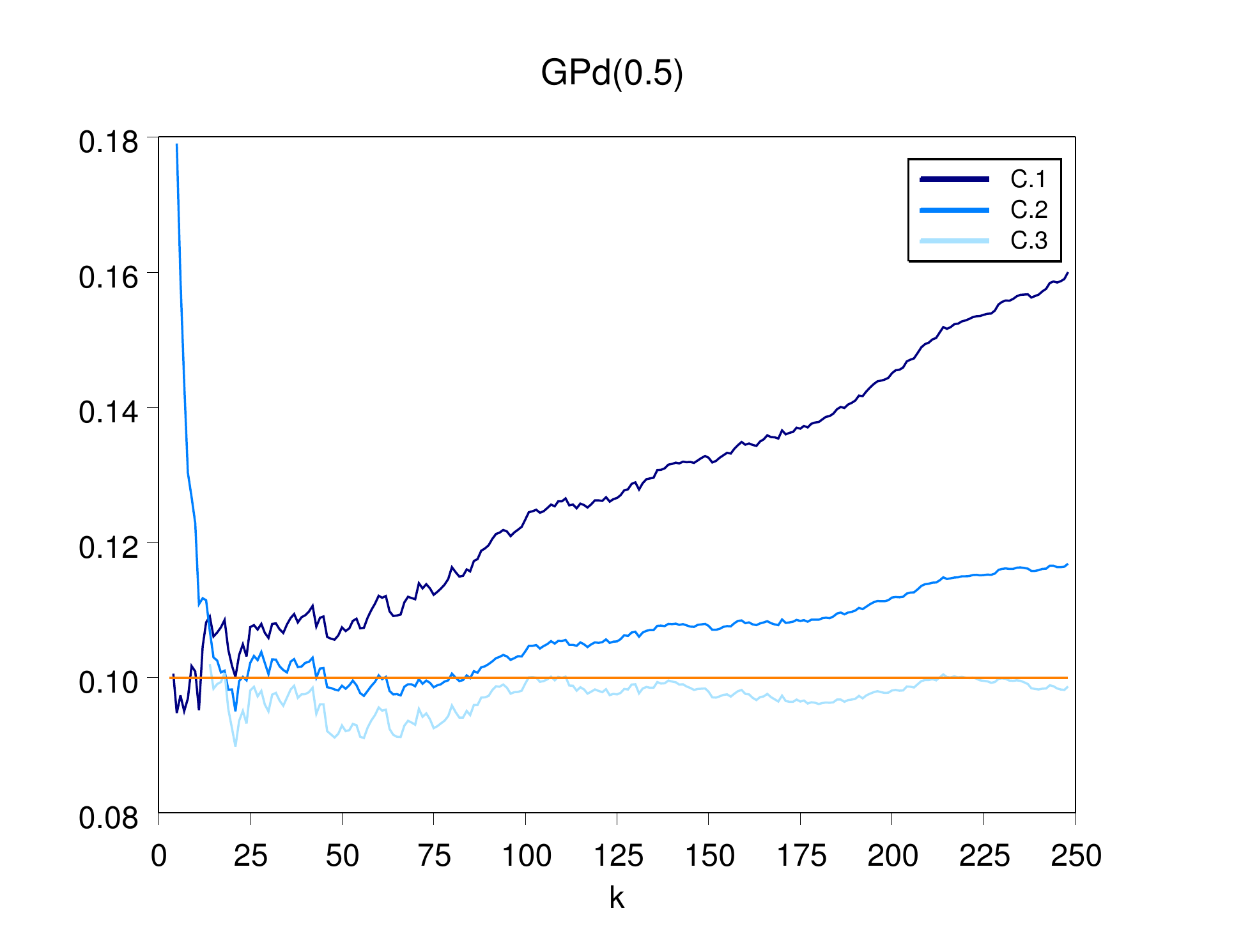}\hfill
\includegraphics*[scale=0.42]{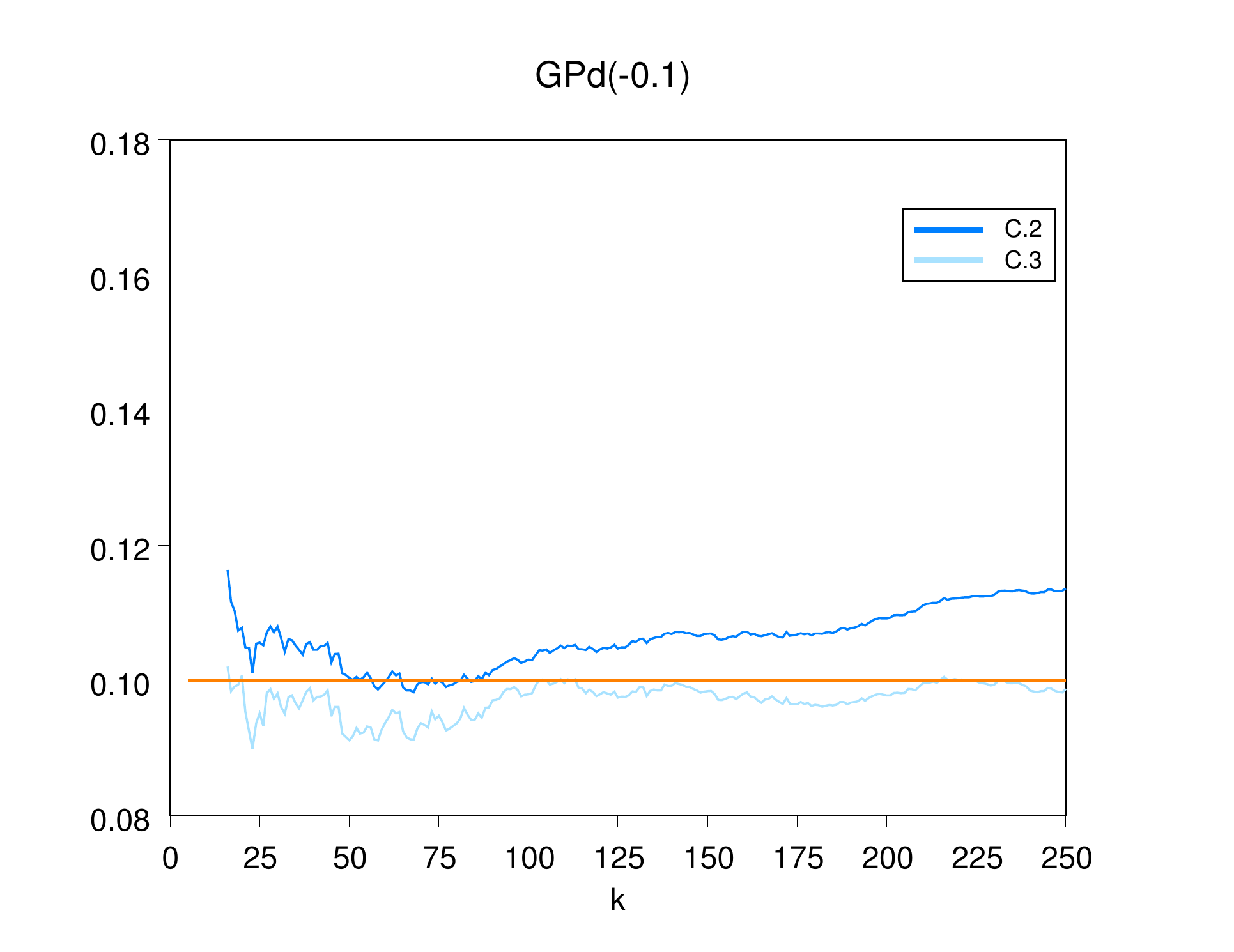}\hspace{2.0cm}\hfill
\includegraphics*[scale=0.42]{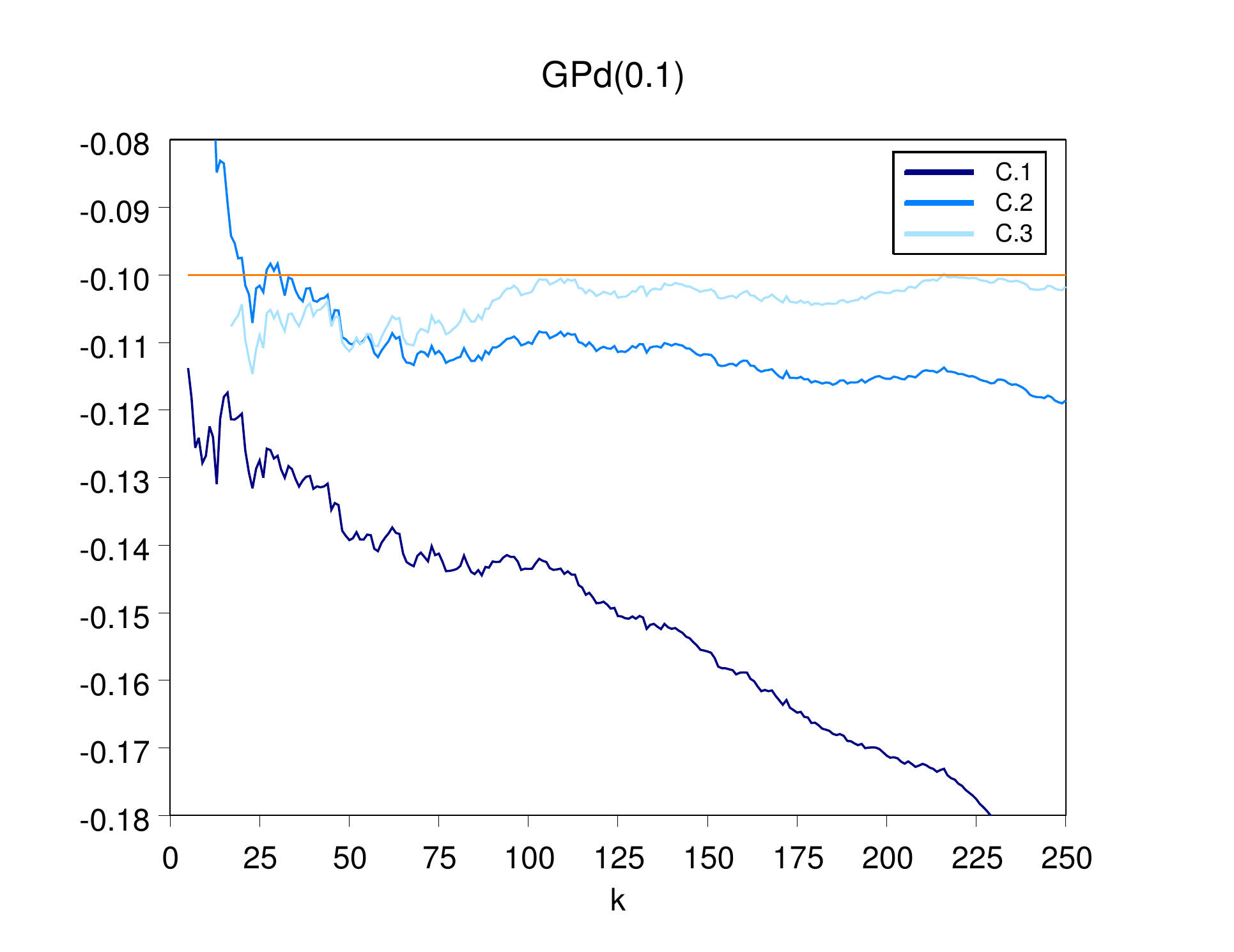}\hfill
\includegraphics*[scale=0.42]{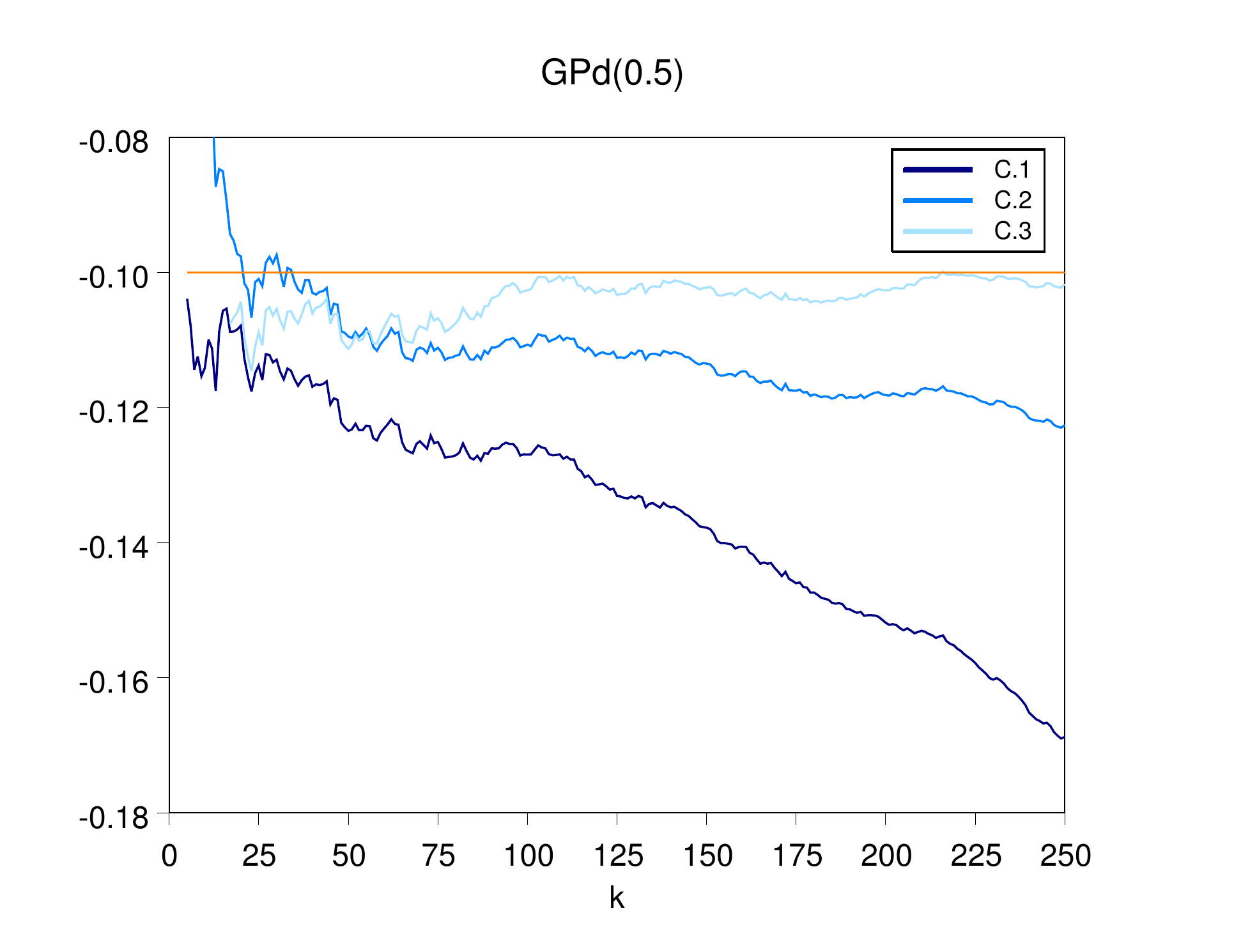}
\label{Fig.Sim1}
\end{center}
\end{figure}

In Figure  \ref{Fig.Sim1} the extreme value index $\gamma$ and scale $a_0$ have been estimated by the estimators prescribed in Remarks \ref{RemHills} and \ref{RemCovariances}. Note that in these and later graphs a realistic choice of the number of upper order statistics seems to be in the range between $k=25$ and $k= 50$. This is the main reason why we say that the second estimator, $\hat{c}^{(2)}$, has the best performance among the three estimators, despite the estimator $\hat{c}^{(3)}$ begins to return estimates in a close vicinity of the true value $c$ as $k$ is approaching the whole sample size $n$. The latter occurs because the estimator $\hat{c}^{(3)}$ is the most direct empirical counterpart of relation \eqref{TrendPropag} and the underlying Generalized Pareto distribution is the precise limit distribution in condition \eqref{DOAalt} regarding tail distribution.

Overall, not the estimator $\hat{c}^{(1)}$, but
the other two estimators of $c$ seem to return stable trajectories
in a close vicinity of the actual $c$-value quite often. Because the estimator $\hat{c}^{(1)}$ is
subject to $\gamma$ positive,  one should expect that $\hat{c}^{(1)}$ is more
prone to bias and/or variance inflations due to the presence of a true $\gamma$ near
zero. This is verified by the simulations: the panels on the left in Figure
\ref{Fig.Sim1} show that setting $\gamma=0.1$ results in considerable bias displayed by $\hat{c}^{(1)}$. Estimators $\hat{c}^{(2)}$ and $\hat{c}^{(3)}$ come out with the best performance for intermediate values of $k$.

Figure  \ref{Fig.Sim5} gives a comparison with the maximum likelihood estimators, valid for $\gamma\geq -1$. The plot on the left panel of Figure \ref{Fig.Sim5} epitomizes the behavior of $\hat{c}^{(2)}$, either with the maximum likelihood estimator or with the moment estimator, because in other simulations we have conducted the moment estimator has been recognized so as to instill less bias in $\hat{c}^{(2)}$ while pertaining to moderate values of $k$, which are the most adequate in the context of extreme value theory. Moreover, the fact that the maximum likelihood estimates for shape and scale, $\gamma$ and $a_0$, have to be numerically obtained can pose a practical difficulty to our trend estimation procedure. The convergence of appropriate numerical procedures may be rather poor when the true value of $\gamma$ is close to zero. For $\gamma=0.1$, the number of times the algorithm has converged thus returning feasible estimates of $\gamma$ and $a_0$ is depicted on the right hand-side of Figure \ref{Fig.Sim5}.

\begin{figure}
\caption{\footnotesize Estimated means of $\hat{c}^{(2)}$ either with Moment estimator and ML estimator for the Generalized Pareto distribution with $\gamma=0.1$ and $c=0.1$. The number of samples amongst the 1000 replicates that have produced valid ML-estimates is presented on the right hand-side.} 
\begin{center}
\includegraphics*[scale=0.4]{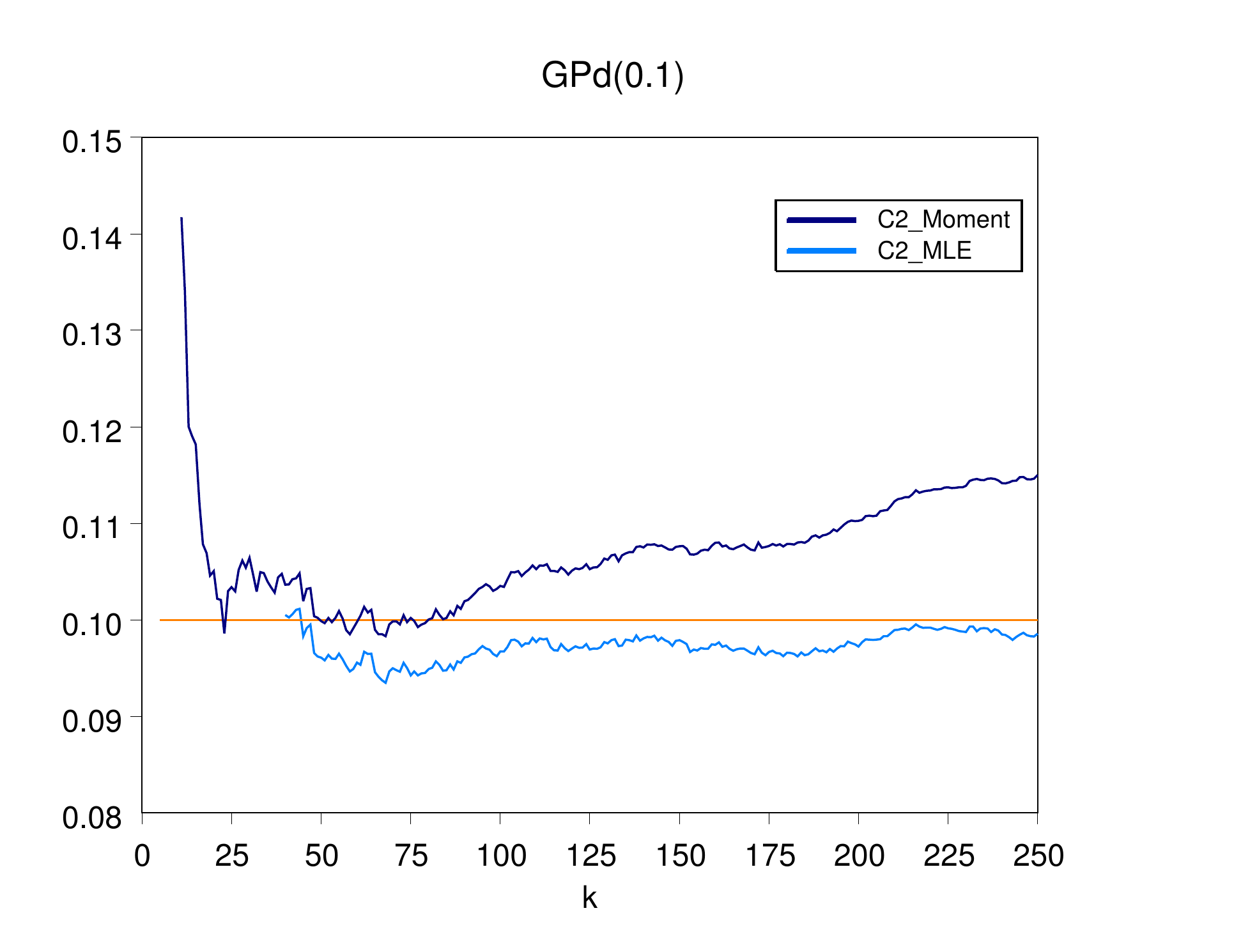}\hfill
\includegraphics*[scale=0.4]{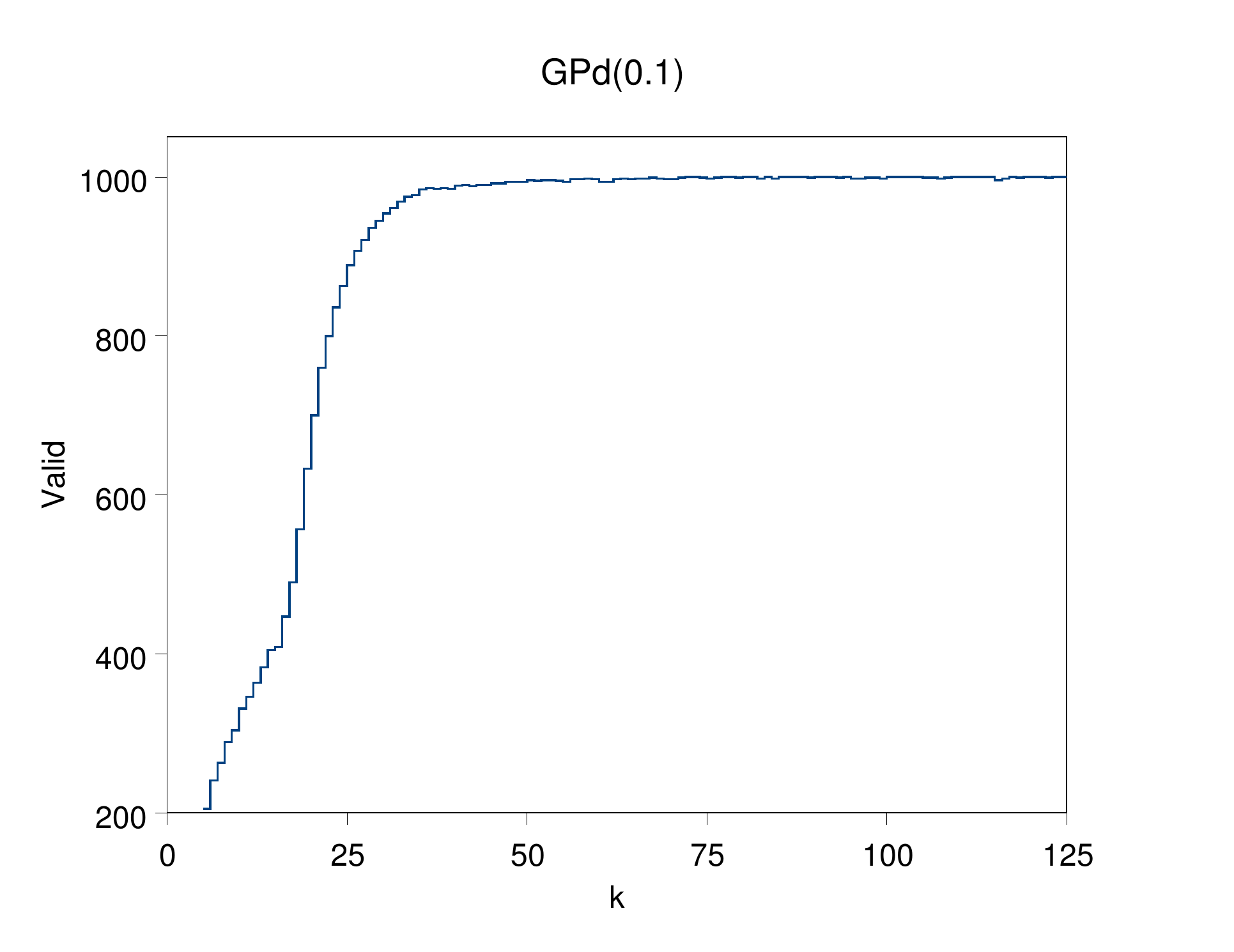}
\label{Fig.Sim5}
\end{center}
\end{figure}

{\bf (ii)} For the ordinary Pareto distribution with distribution function $1-x^{-1/\gamma}$, $x\geq 1$, $\gamma>0$, we simulate the trend by taking $X(s_j)\id \,e^{cs_j\gamma}X$ where $X$ follows the Pareto distribution. Again we have an exact fit in view of condition \eqref{Us2U0} for $\gamma>0$.

{\bf (iii)} For the Cauchy distribution again the trend is simulated by taking 
$X(s_j)\id \,e^{cs_j}X$ (since $\gamma$ is $1$). Relations \eqref{2ndERVU0} and  \eqref{2ndUs2U0Dif} hold. In this case $|\alpha_0|$ is a regularly varying function with index $-2$ (entails a rather fast convergence).

Figure \ref{Fig.Sim2} displays the simulation results for the Pareto  and Cauchy distributions with $c=0.1$. Again, the extreme value index $\gamma$ and scale $a_0$ have been estimated by the moment estimator. In the particular case of the Pareto distribution, the estimates process of $\hat{c}^{(3)}$ is a repeat of the previous exact model described in (i), due to its invariance towards a shift in location and/or changes to in the scale of the observed data. Although $\hat{c}^{(1)}$ is only valid for $\gamma >0$, it remains a matching competitor against $\hat{c}^{(2)}$ and $\hat{c}^{(3)}$ under the three parent distribution functions considered in Figure \ref{Fig.Sim2}. The estimators $\hat{c}^{(r)}$, $r=1,2$ and $3$, are quite close to the real value if one chooses $k$ close to 30; the graph in that area is relatively flat.
\begin{figure}
%\vspace{6pc}
\caption{\footnotesize Estimated means of $\hat{c}^{(r)}$, $r=1,2,3$, plotted
against the same number $k$ of top observations on each location $s_j=j/m$, $j=1,2,\ldots,m$, in
case of true value $c=0.1$, with Pareto and Cauchy parent distributions.}
\begin{center}
\includegraphics*[scale=0.4]{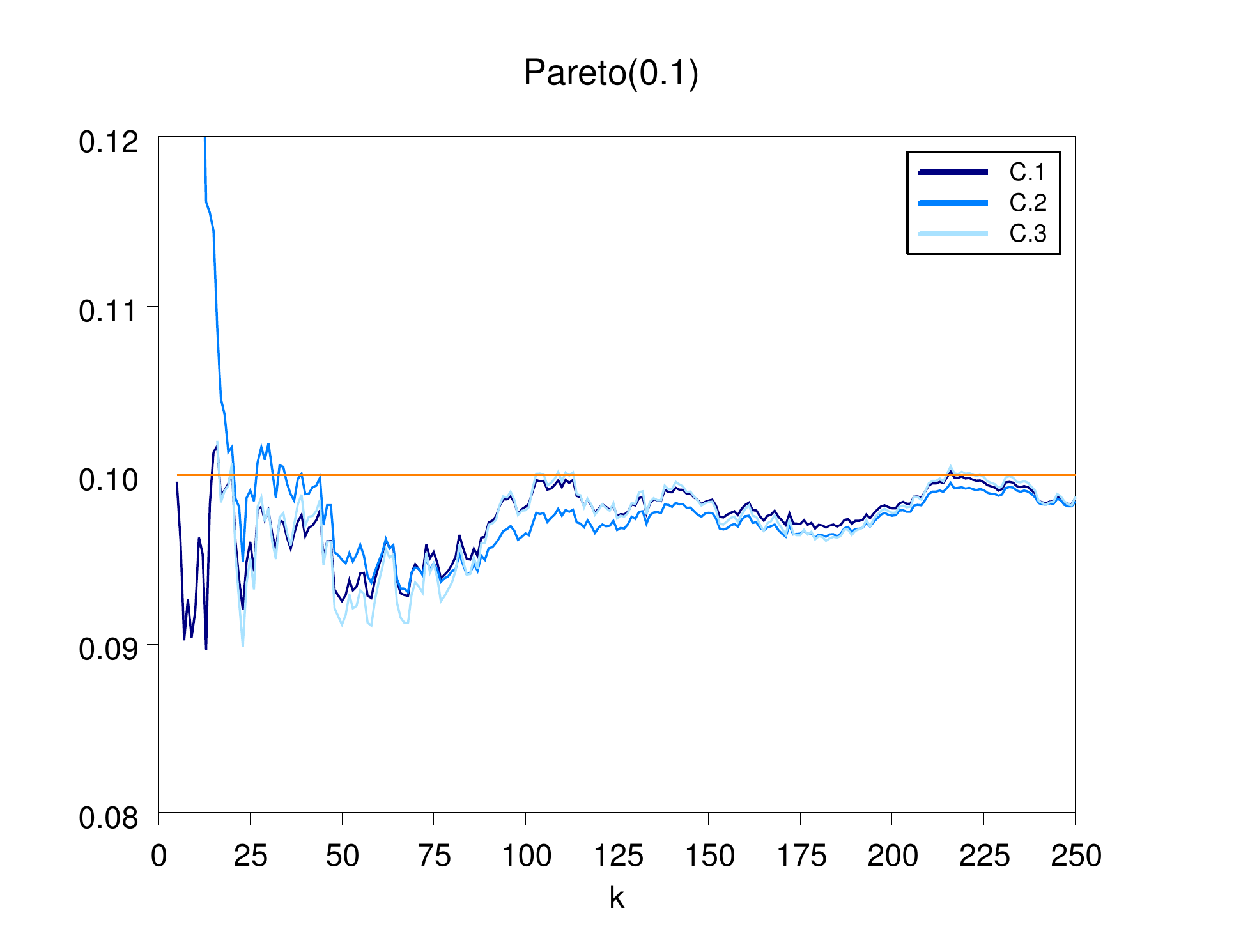}\hfill
\includegraphics*[scale=0.4]{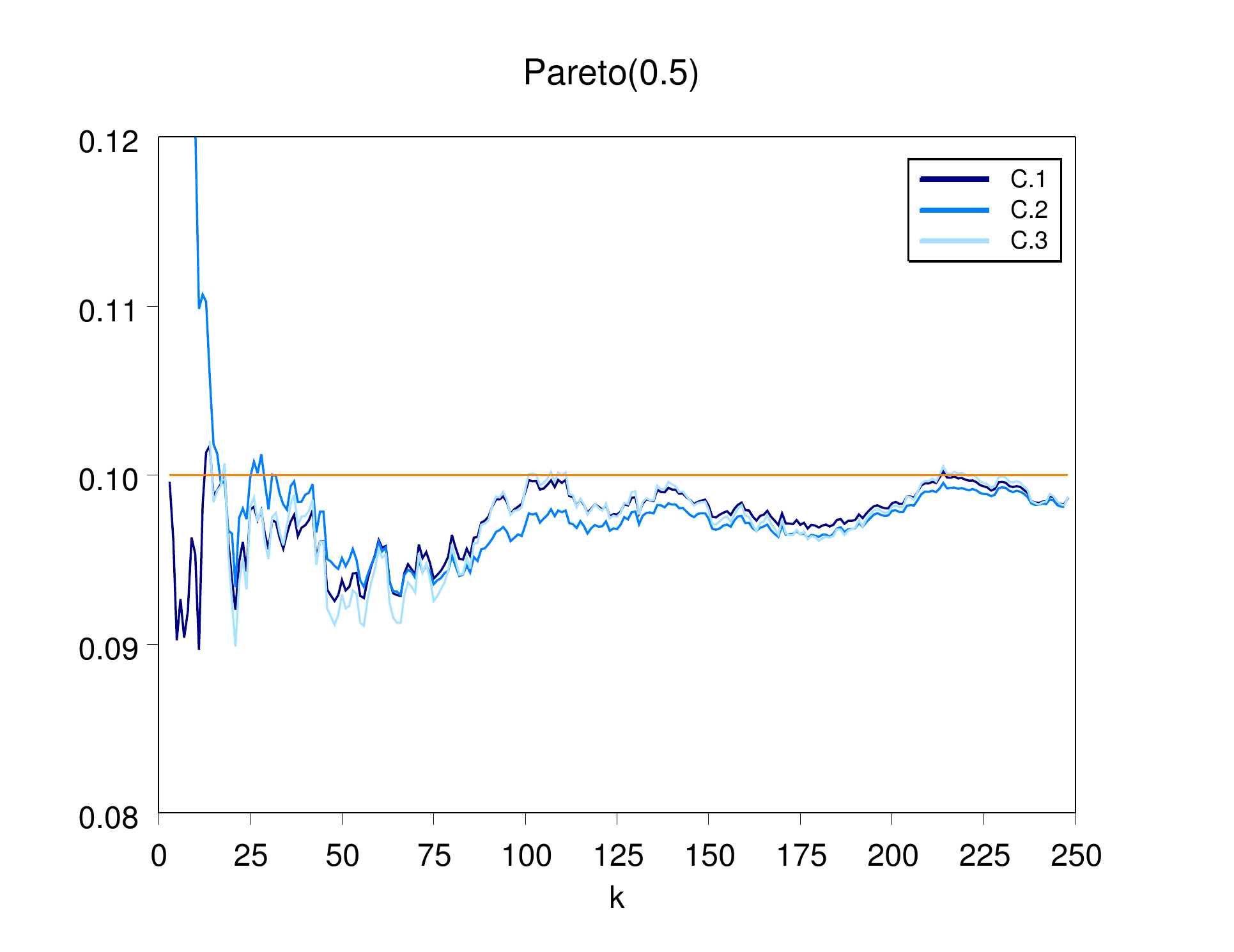}\hfill
\includegraphics*[scale=0.4]{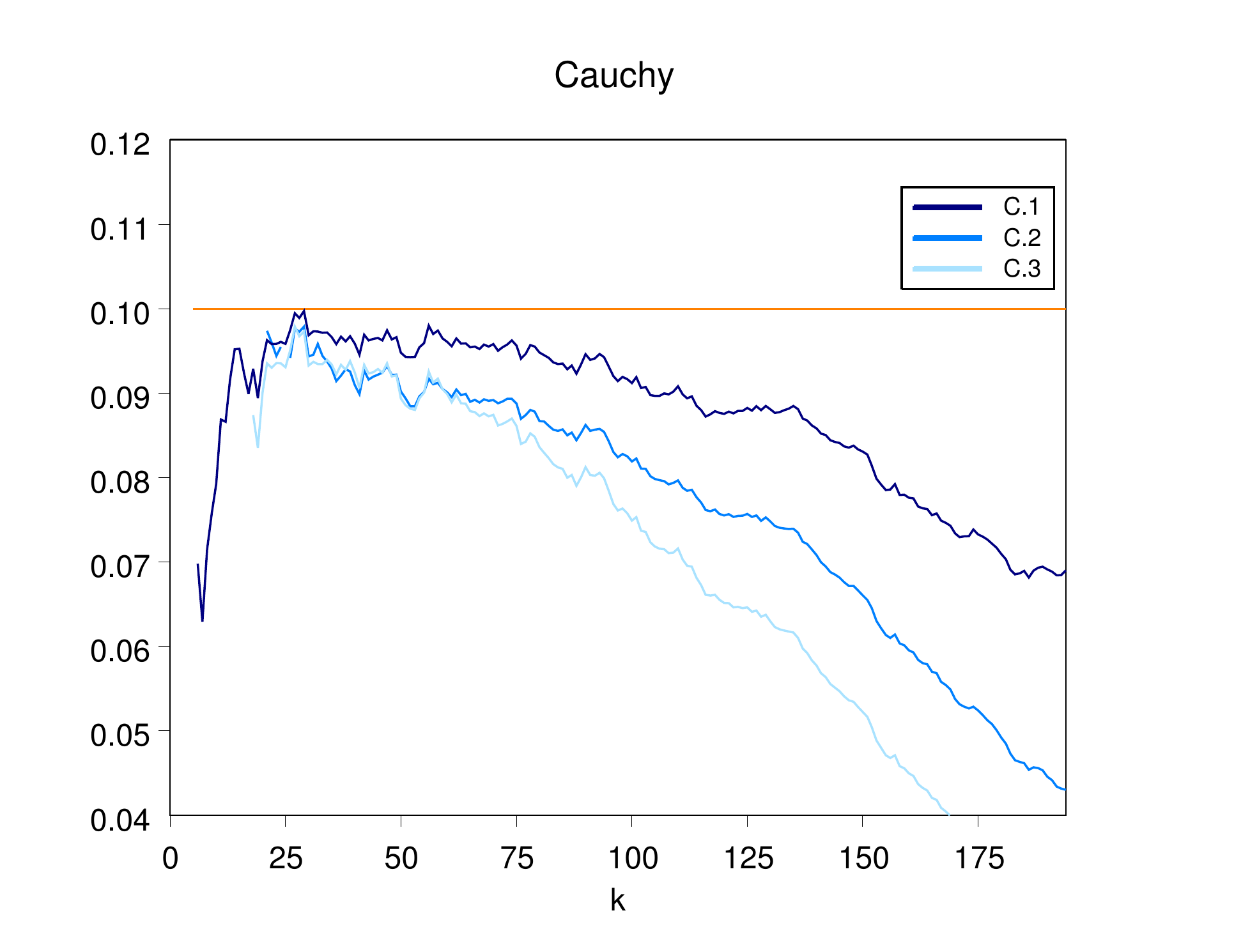}
 \label{Fig.Sim2}
\end{center}
\end{figure}

We emphasize that a much more comprehensive simulation study has been performed. From the described simulations and the other ones we conclude that the estimators perform reasonably well. The main conclusion is that estimators $\hat{c}^{(2)}$ and $\hat{c}^{(3)}$ seem to behave better than $\hat{c}^{(1)}$. In the next (application) section we shall adopt estimator $\hat{c}^{(2)}$. Nevertheless, in general applications, a possible focus standing on $\hat{c}^{(3)}$ alone could be justified by its implicit detachment from the extreme value index $\gamma$ thus making  $\hat{c}^{(3)}$ more versatile in the estimation of $c$ for a wider range of underlying models pertaining to diverse values of $\gamma$.

\clearpage
%==================================================================

\section{Data Analysis}
\label{SecData}

\begin{figure}
\caption{\footnotesize Selected gauging stations in The Netherlands
(\emph{left}); selected gauging stations in Germany and Dutch
stations near the borderline (\emph{right}).}
\begin{center}
\includegraphics*[scale=0.45]{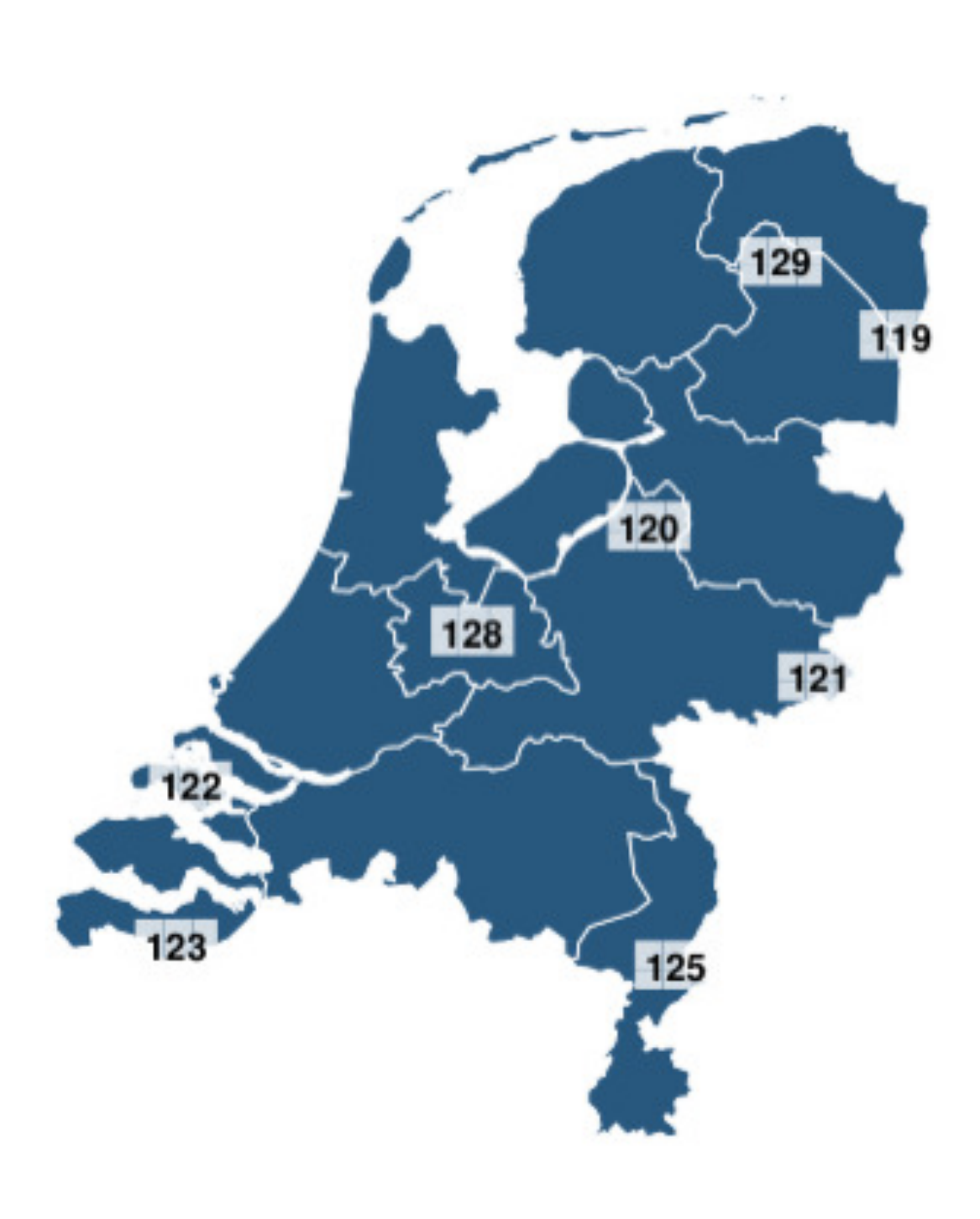}
\includegraphics*[scale=0.45]{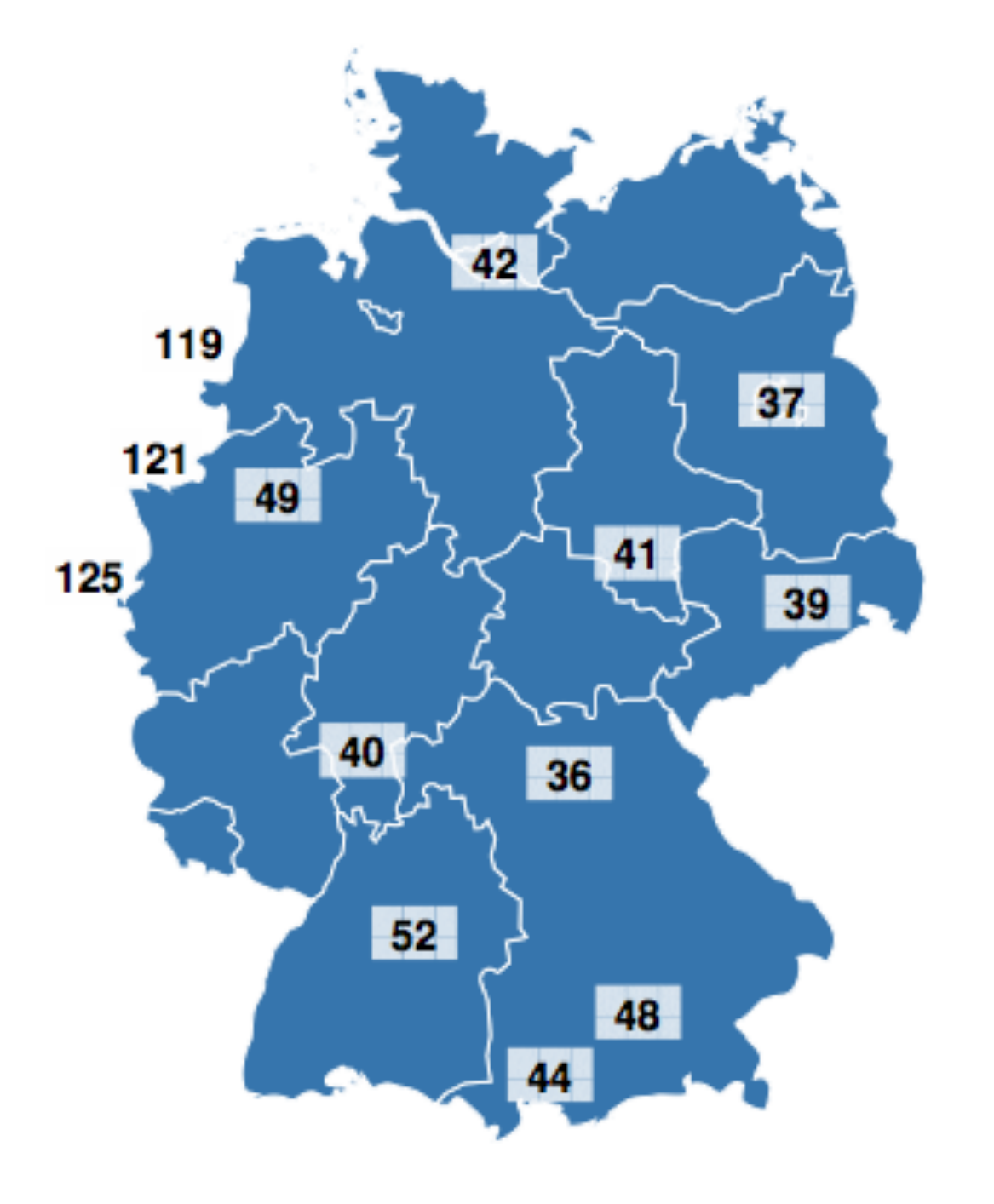}
 \label{Fig.STNDandNL}
\end{center}
\end{figure}

As an application of the tail trend assessment 
methodology developed
in this paper, we will look at daily rainfall totals collected in
$18$ gauging stations across Germany and The Netherlands, comprising
latitude 47N-53N and longitude 5E-13E. The geographic location of the stations is displayed in Figure \ref{Fig.STNDandNL}. Rainfall data are from the European Climate
Assessment and Dataset. We note that different
stations suffer from different coverage in time in the sense that
not all stations have started regular recording of data at the same
year. Moreover there are some stations with missing values. All of them however
meet the basic criterium for completeness that there is less than 10
days missing per year which leaves us with  $90$ years of
complete data from 1918 up to 2007.

\subsection{Trend estimation in the extreme relative risk model}
Figure \ref{Fig.Series} displays yearly maxima plots for
several stations on the basis of
available data. We get a mixed picture. In \emph{STN41-Halle}, for instance, precipitation does
not seem to be as severe now as in the first half of the $20$th century
anymore, whereas \emph{STN39-Dresden} shows increasingly annual maxima
with the largest peak of $158\, mm$ of rain, spot on the
catastrophic event of 12 August 2002.

In what follows we shall assume that as long as there is at least one day in between, there is not much dependence in the amount of rainfall on two different days. For each station we select first the highest observation. Then we remove the observations on the day before and after. Next we select the highest observation from the remaining data, etc. This goes on until we have selected 70 days or the threshold of $1\, mm$ is reached. That way we get a sequence of higher order statistics from i.i.d. data. Table \ref{Table} displays the number of rain days (i.e.\ with at least $1\, mm$ of rain) per station.

\begin{table}\centering
\caption{\footnotesize Total number of rain days in the period from 1918 to 2007 for each selected station.}\label{Table}
\begin{footnotesize}
\begin{tabular}{|c|c|c|c|c|c|}
  \hline
  STN & Name & Country & Lat. & Lon. & Rain days \\
    \hline
  36 & Bamberg          & D & 48$^\circ$49'N & 11$^\circ$33'E & 6276 \\
  37 & Berlin           & D & 52$^\circ$31'N & 13$^\circ$20'E & 6270 \\
  39 & Dresden          & D & 51$^\circ$31'N & 13$^\circ$44'E & 6293 \\
  40 & Frankfurt        & D & 50$^\circ$6'N  & 08$^\circ$40'E & 6069 \\
  41 & Halle            & D & 51$^\circ$28'N & 11$^\circ$57'E & 6230 \\
  42 & Hamburg          & D & 53$^\circ$33'N & 09$^\circ$59'E & 6125 \\
  44 & Hohenpeissenberg & D & 47$^\circ$48'N & 10$^\circ$59'E & 6230 \\
  48 & M\"{u}nchen      & D & 48$^\circ$08'N & 11$^\circ$34'E & 6153 \\
  49 & M\"{u}nster      & D & 52$^\circ$59'N & 07$^\circ$41'E  & 5904 \\
  52 & Stuttgart        & D & 48$^\circ$46'N & 09$^\circ$46'E & 6289 \\
  119 & Ter Apel        & NL & 52$^\circ$53'N & 07$^\circ$04'E & 6069 \\
  120 & Heerde          & NL & 52$^\circ$24'N & 06$^\circ$03'E & 6300 \\
  121 & Winterswijk     & NL & 51$^\circ$59'N & 06$^\circ$42'E & 6298 \\
  122 & Kerkwerve       & NL & 51$^\circ$40'N & 03$^\circ$51'E & 6276 \\
  123 & Westdorpe       & NL & 51$^\circ$13'N & 03$^\circ$52'E & 6300 \\
  125 & Roermond        & NL & 51$^\circ$11'N & 05$^\circ$58'E & 6300 \\
  128 & De Bilt         & NL & 51$^\circ$06'N & 05$^\circ$11'E & 6299 \\
  129 & Eelde           & NL & 53$^\circ$08'N & 06$^\circ$35'E & 6300\\
\hline
\end{tabular}
\end{footnotesize}
\end{table}

\begin{figure}
%\vspace{6pc}
\caption{\footnotesize Some yearly maxima of daily rainfall.}
\begin{center}
\includegraphics*[scale=0.37]{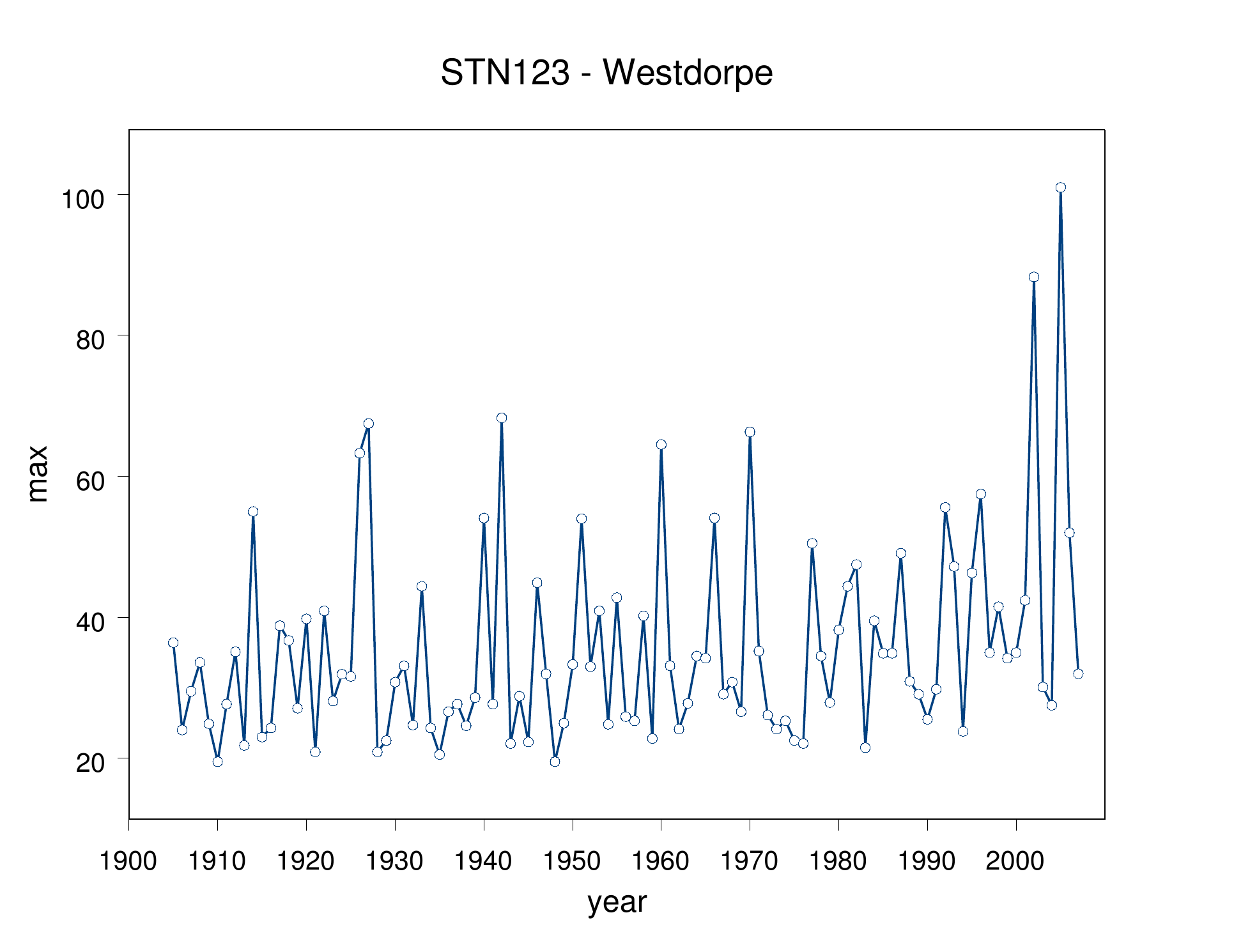}
\includegraphics*[scale=0.37]{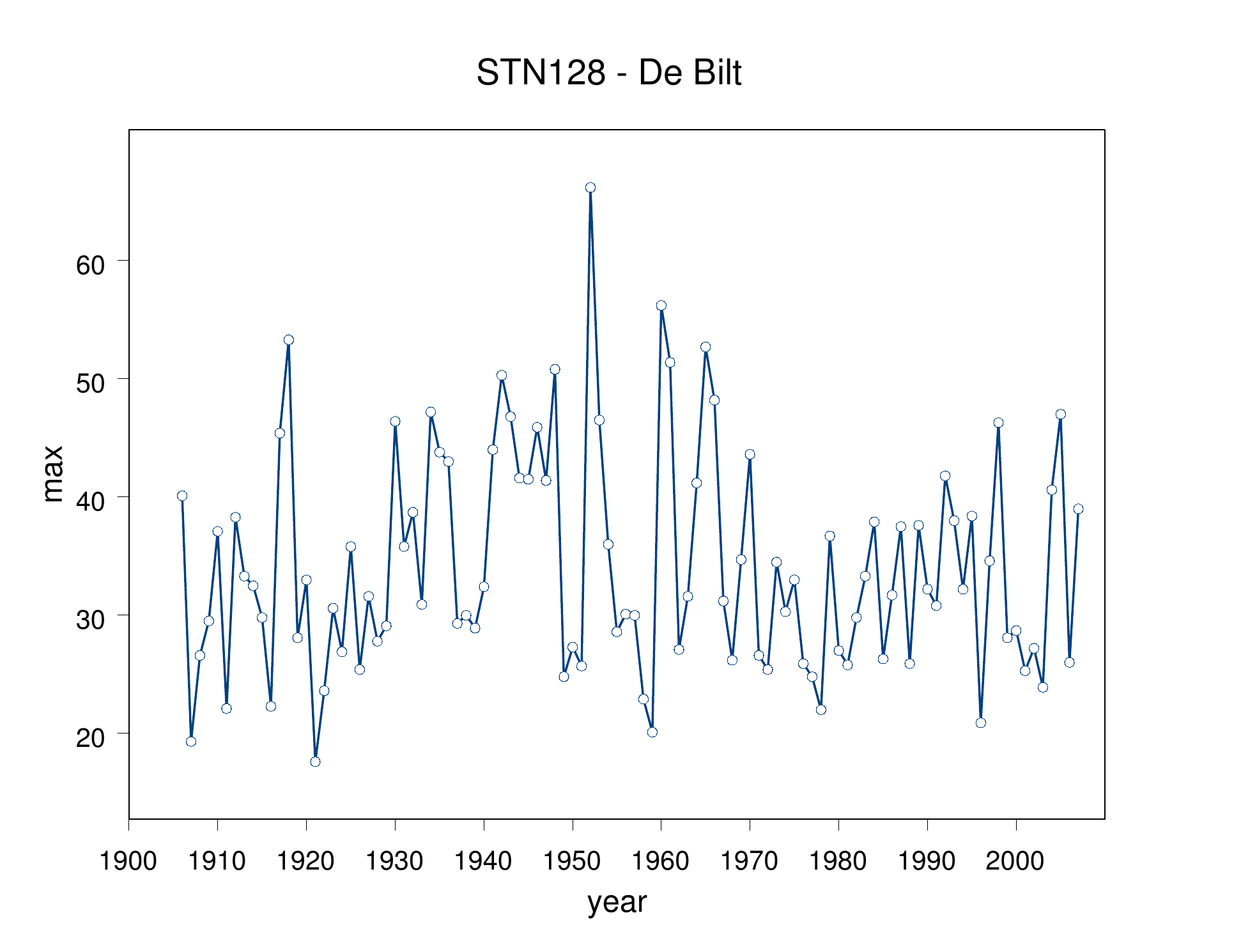}
\includegraphics*[scale=0.37]{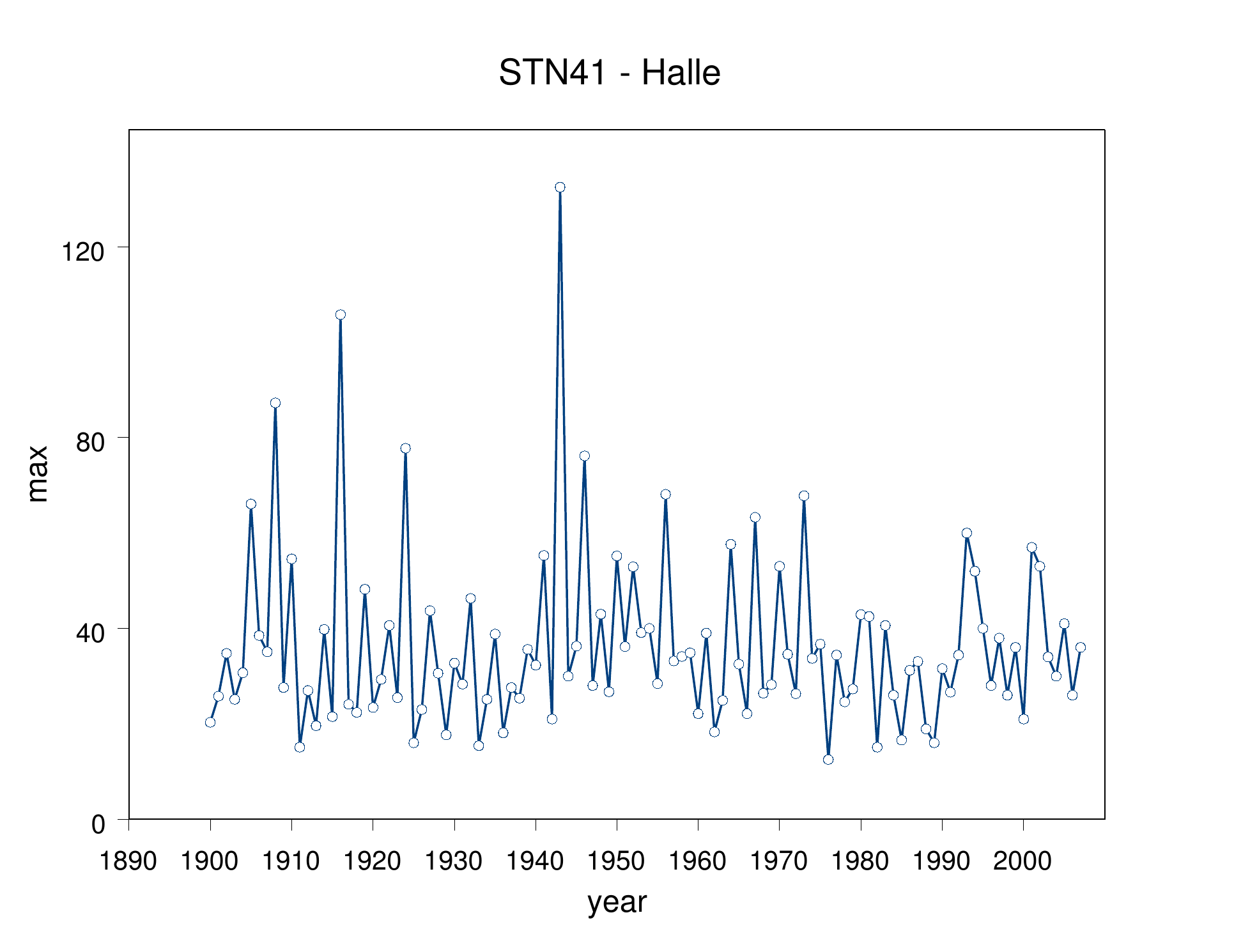}
\includegraphics*[scale=0.37]{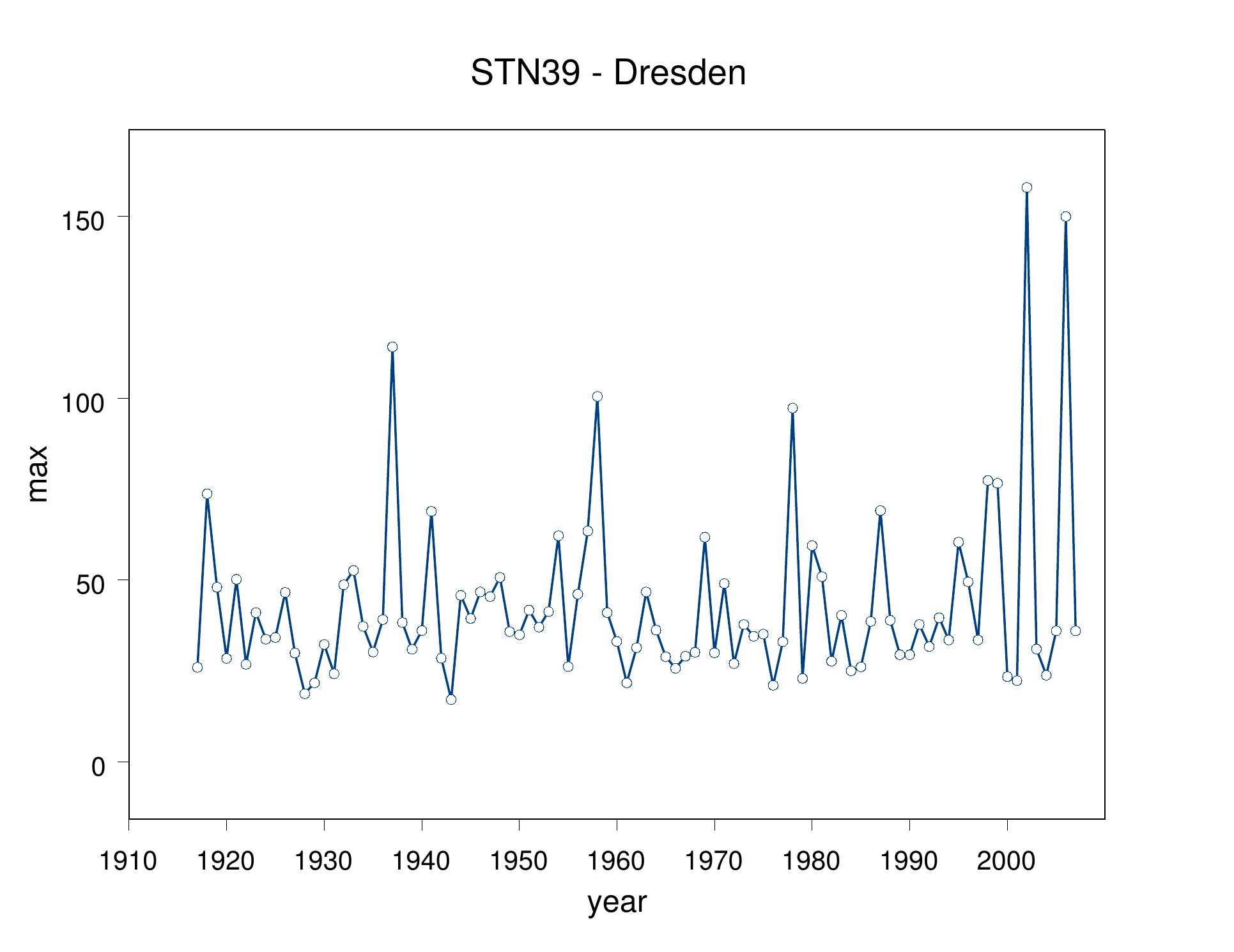}
\label{Fig.Series}
\end{center}
\end{figure}

Since we are not looking for a spatial trend now we shall make a study
of the highest daily rainfall amounts in the 90 year period for each
station separately. At each location, for $\hat{\gamma}^+$ (in connection with $\hat{c}^{(1)}$) we use Hill's estimator and for $\hat{\gamma}$ and $\hat{a}_0$ (for $\hat{c}^{(2)}$) we use the moment estimator (cf. sections 3.5 and 4.2 of
\citet{deHaanFerreira:06}) 

The point estimation of the extreme value index $\gamma$
and trend estimation is conducted with the same  number of
upper order statistics $k$, just as prescribed in each definition of
$\hat{c}^{(r)}$, $r=1,2,3$, introduced in \eqref{Est1},
\eqref{Est2}, and \eqref{Est3}, respectively. 

In order to have enough tail related rain measurements per time point
we found reasonable to take consecutive intervals of 5 years over the
90-year span. The disjoint intervals serve as our time points indexed by $j=0,1,2,\,\ldots,17=m$. 

For the purpose of data analysis, the gauging stations have been divided  into two
groups, determined by their alignments in the general climate
characteristics (according to the K\"{o}ppen–-Geiger climate classification system, see e.g. \citet{Kotteketal:06}). Each selected station in Germany was classified as either
\emph{humid oceanic} or \emph{humid continental}. All stations
across The Netherlands are classified as \emph{humid oceanic}.
Although we are not looking for spatial coherence we hope to benefit
from this information to get a more systematic presentation of our
results.

Estimates of $\gamma$ in a vicinity of $0.1$ often emerge in
connection with the extremal behavior of distributions underlying
rainfall records (see e.g. \citet{Buishandetal:08}, p.239; also \citet{Mannshardtetal:10}, p.492). This seems to hold for most of the considered stations although there is a lot of variation.

Figure \ref{Fig.Stations}  includes sample paths of
the three proposals for estimating the tail trend parameter $c\in \real$ for some typical gauging stations. As
already discussed, we shall handle estimation of $c$ by screening
 plots as in Figure \ref{Fig.Stations} for
plateaus of stability in the early part of the graphs pertaining to
the smoother estimator $\hat{c}^{(2)}$, coherent with the path patterns of $\hat{c}^{(1)}$ and
$\hat{c}^{(3)}$ whenever possible.

\begin{figure}
%\vspace{6pc}
\caption{\footnotesize Sample path of the overall moment estimator
for $\gamma$ and sample trajectories of $\hat{c}^{(r)}$, $r=1,2,3$,
as a function of the same number $k$ of top observations for each
5-year interval between 1918 and 2007 for several stations.}
\begin{center}
\includegraphics*[scale=0.4]{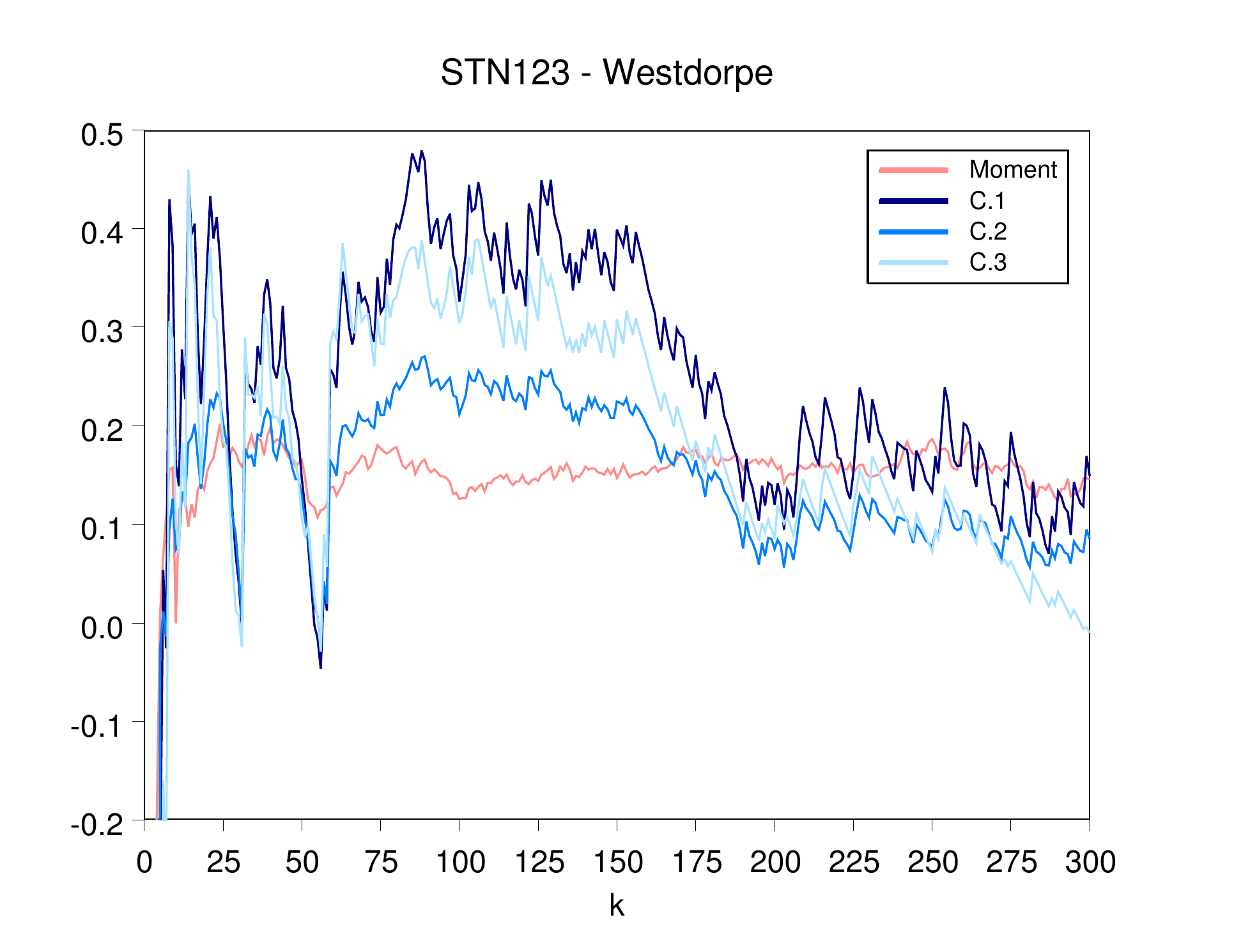}
\includegraphics*[scale=0.4]{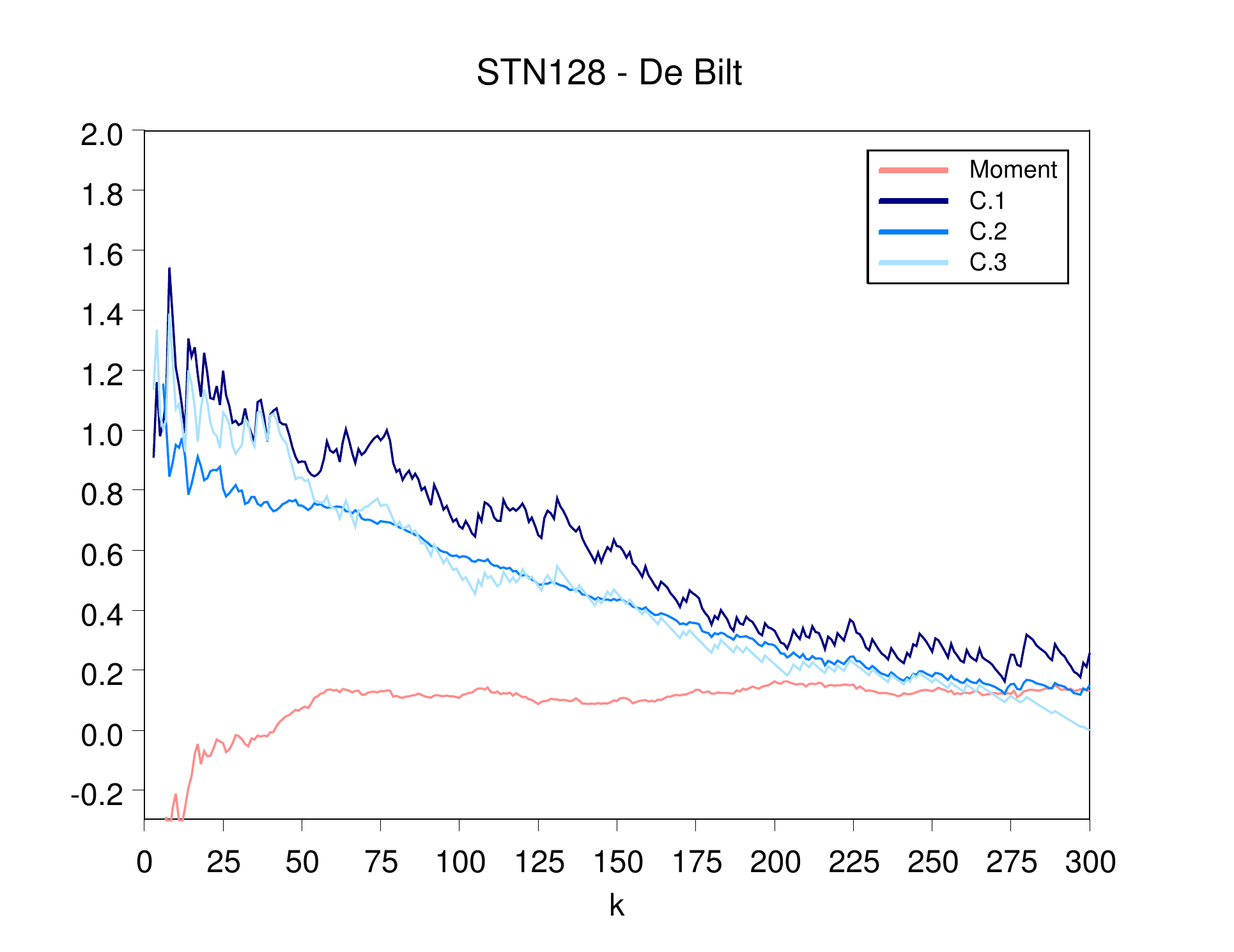}
\includegraphics*[scale=0.4]{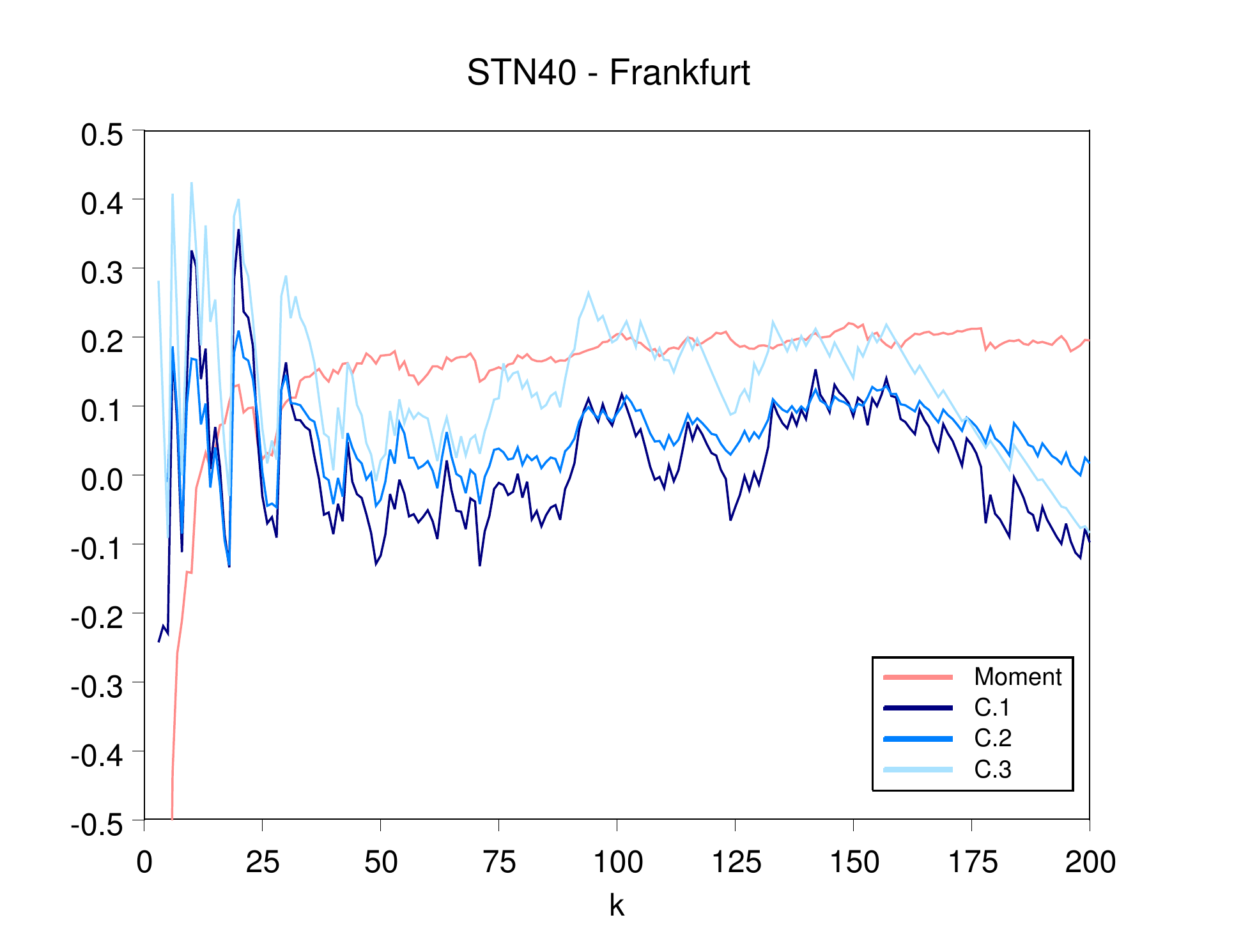}\vspace{0.5cm}
\includegraphics*[scale=0.4]{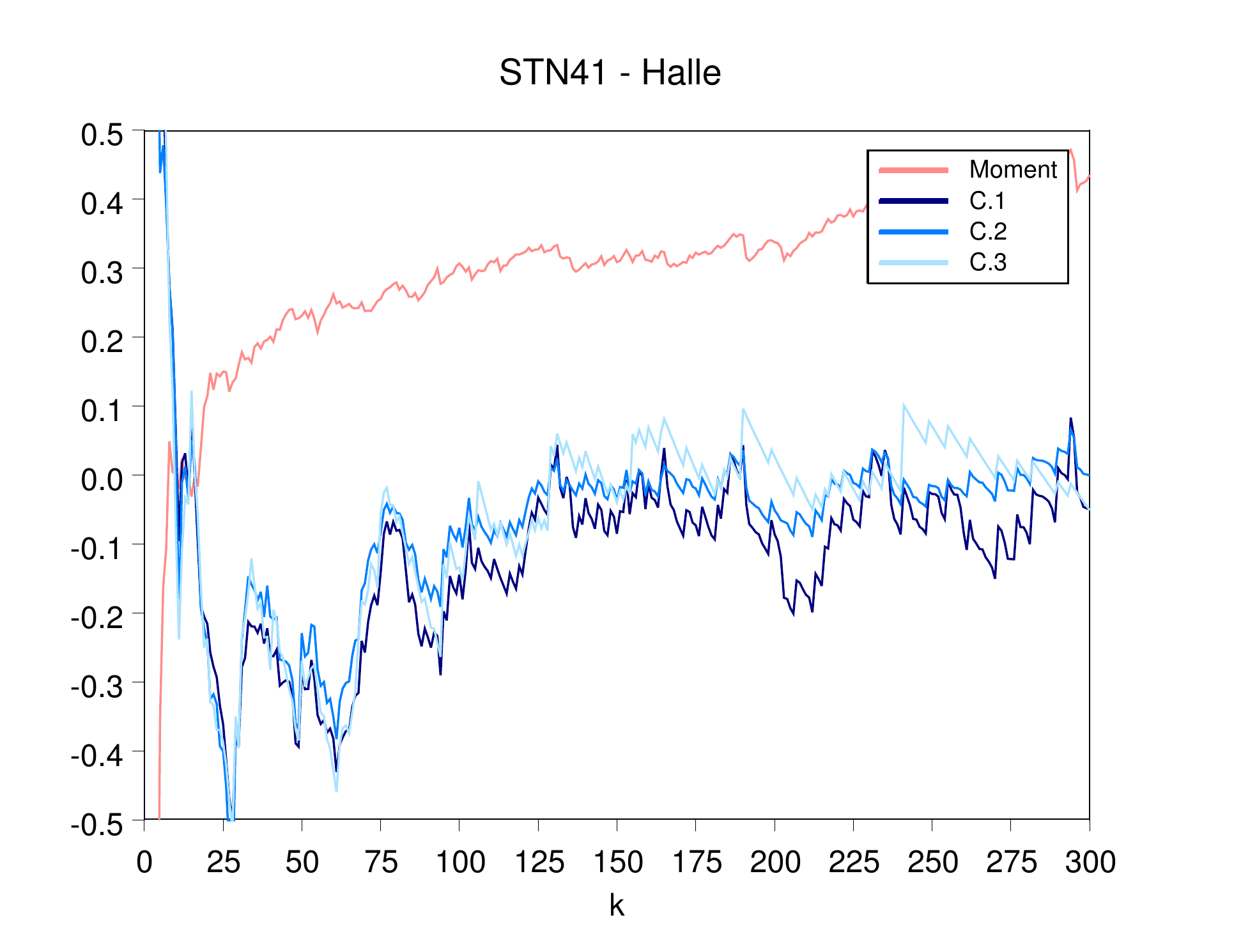}
\includegraphics*[scale=0.4]{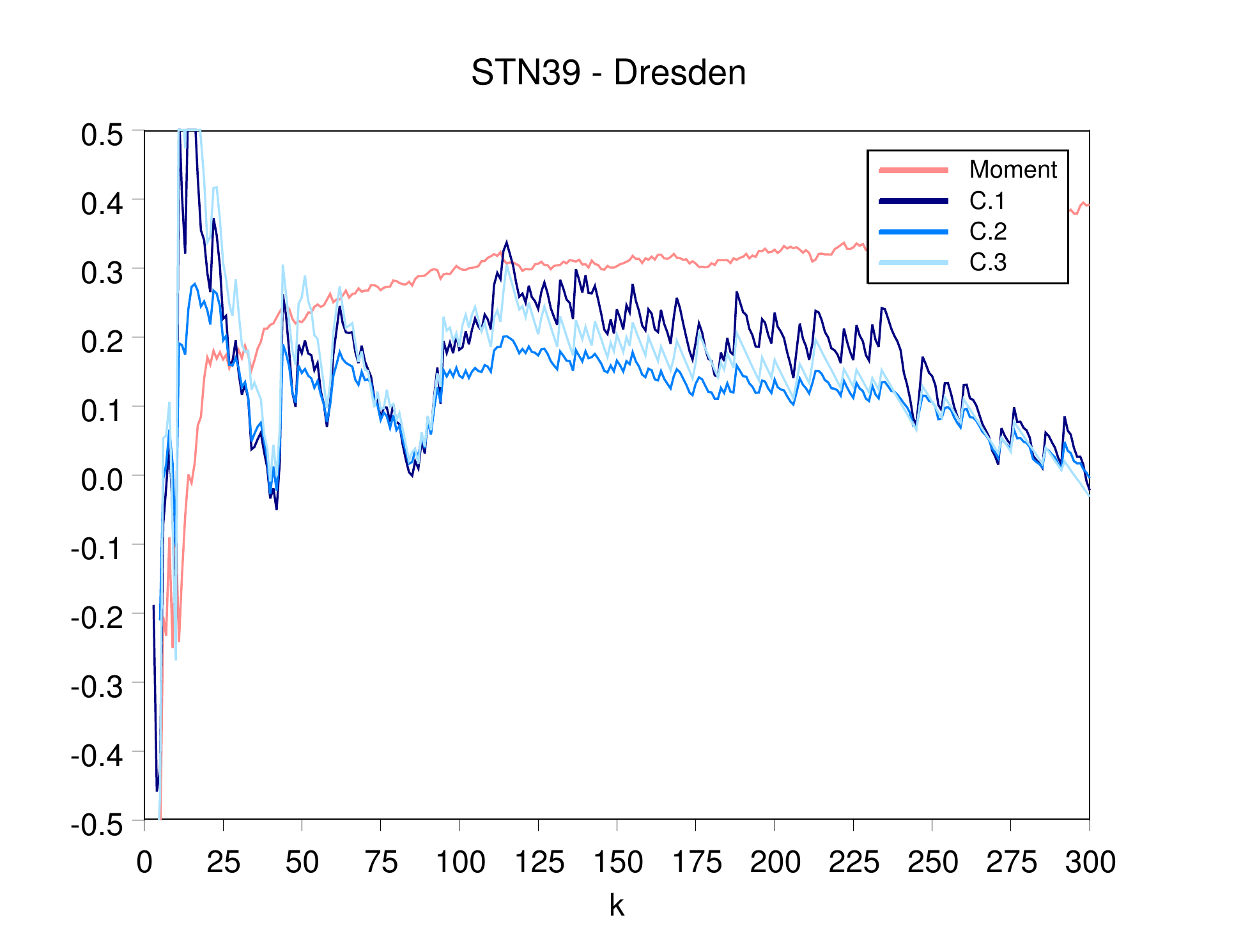}
\label{Fig.Stations}
\end{center}
\end{figure}

Table \ref{TableEstTrendDNL} contains the estimated values
of $c$ for each station by their increasing order of magnitude. Standard errors are also provided.
Bearing in mind the simulation results from section \ref{SecSims}, here we shall confine attention to the leading estimator $\hat{c}^{(2)}$.  Since the combined moment estimator $\hat{\gamma}=\hat{\gamma}_{n,k}$ as defined in \eqref{GammaOverall} is a consistent estimator for $\gamma$, the asymptotic standard error of $\hat{c}^{(2)}=\hat{c}$ can be estimated by
\begin{eqnarray*}
    \widehat{\mbox{s.e.}}(\hat{c}^{(2)})&:=& \frac{1}{\sqrt{k}}\, \frac{1}{\sumab{j=1}{m}s_j^2}\Bigl\{\Bigl(\sumab{j=1}{m}s_j\,\frac{1-e^{-\hat{c}\hat{\gamma} s_j}-\hat{c}\hat{\gamma} s_j}{\hat{\gamma}^2}\Bigr)^2\frac{ \sigma^2_{\Gamma}(\hat{\gamma})}{m}+ \sumab{j=1}{m}s_j^2\\
    & & \mbox{\hspace{2cm}}+\Bigl(\sumab{j=1}{m}s_je^{- \hat{c}\hat{\gamma} s_j}\Bigr)^2+\Bigl(\sumab{j=1}{m}s_j\,\frac{1-e^{-\hat{c}\hat{\gamma} s_j}}{\hat{\gamma}}\Bigr)^2 \sigma^2_{A_0}(\hat{\gamma})
\Bigr\}^{\frac{1}{2}},
\end{eqnarray*}
where
\begin{equation*}
    \sigma^2_{\Gamma}(\gamma):= \left\{
                                \begin{array}{ll}
                                  1+ \gamma^2, & \mbox{ } \gamma \geq 0, \\
                                  \frac{(1-\gamma)^2(1-2\gamma)(1-\gamma+6\gamma^2)}{(1-3\gamma)(1-4\gamma)}, & \mbox{ } \gamma<0
                                \end{array}
                              \right.
                              \end{equation*}
and
\begin{equation*}
\sigma^2_{A_0}(\gamma):= \left\{
                                \begin{array}{ll}
                                  2+\gamma^2, & \mbox{ } \gamma \geq 0,\\
                                  \frac{2-16\gamma+51\gamma^2-69\gamma^3+50\gamma^4-24\gamma^5}{(1-2\gamma)(1-3\gamma)(1-4\gamma)}, & \mbox{ } \gamma<0
                                \end{array}
                              \right.
\end{equation*}
(cf. Remark \ref{RemAsymptVar} in section \ref{SecResults}).

\begin{table}%--------------------Table----------------------
\centering \caption{\footnotesize Station-wise estimates for the trend
parameter $c$ by adopting estimator $\hat{c}^{(2)}$.}\label{TableEstTrendDNL}
\medskip
\begin{footnotesize}
\begin{tabular}{|c|c|c|c|}
 \multicolumn{2}{c}{\emph{Continental Stations}}\\
\hline
  Station &$\hat{c}$& $\widehat{\mbox{s.e.}}(\hat{c})$ &$\hat{\gamma}$\\
  \hline
   STN 41 & $-0.2$ & $0.256$ &$0.2 $   \\
   STN 44 & $-0.1$ & $0.271$ &$0.15$  \\
   STN 48 & $-0.1$ & $0.289$ &$0.13$    \\
   STN 37 & $-0.06$& $0.325$ &$0.2$  \\
   STN 39 & $0.25$ & $0.308$ &$0.19$  \\
   STN 36 & $0.32$ & $0.311$ &$0.18$   \\
  \hline
\end{tabular}
\hspace{0.5cm}
\begin{tabular}{|c|c|c|c|}
 \multicolumn{2}{c}{\emph{Oceanic Stations}}\\
\hline
  Station &$\hat{c}$& $\widehat{\mbox{s.e.}}(\hat{c})$ &$\hat{\gamma}$  \\
  \hline
   STN 49  & $0.16$ & $0.322$ &$0.12 $  \\
   STN 40  & $0.2$ & $0.339$ &$0.1 $  \\
   STN 123 & $0.2$ & $0.300$ &$0.19$  \\
   STN 52  & $0.25$& $0.300$ &$ 0.08$  \\
   STN 122 & $0.55$& $0.311$ &$0.1$     \\
   STN 119 & $0.7$ & $0.323$ & $0.15$ \\
   STN 120 & $0.7$ & $0.349$ &$0.15$     \\
   STN 125 & $0.8$ & $0.408$ &$0.08$\\
   STN 128 & $0.8$ & $0.384$ &$-0.05$  \\
   STN 121 & $0.9$ & $0.386$ &$0.05$ \\
   STN 42  & $0.9$ & $0.432$ &$ 0.05$   \\
   STN 129 & $1.0$ & $0.400$ &$0.07$  \\
    \hline
\end{tabular}
\end{footnotesize}
\end{table}%----------------------------------------------------

\subsection{Detecting a trend in extreme rainfall}

It remains to assess whether the stations with near zero estimates in fact have  a null trend. Examples are
\emph{STN37--Berlin},  \emph{STN48--M\"{u}nchen}, \emph{STN49--M\"{u}nster}, \emph{STN52--Stuttgart} and \emph{STN39--Dresden}.  The last site we refer to is \emph{STN40--Frankfurt},
where testing for the presence of a trend is also of practical importance given  the poor circumstances involving the estimation of the
parameter $c$:  the erratic sample paths
displayed by the three estimators often cross the $c=0$ line
(cf. Figure \ref{Fig.Stations}). In the case of \emph{STN40--Frankfurt} it seems difficult to find a ``plateau of stability" in Figure \ref{Fig.Stations}; the estimate $\hat{c}=0.2$ is rather uncertain. Therefore, we aim at a more definite decision on
the value of $c$ by means of a testing procedure upon
\emph{STN40--Frankfurt} in particular.

In order to tackle the problem of testing the presence of a trend in time, i.e.\ the problem of testing hypothesis
\begin{equation}\label{H0andH1}
    H_0:\, c=0 \quad versus \quad H_1:\,c\neq 0,
\end{equation}
we shall use $Q^{(r)}_{m,n}$ from corollary \ref{CorTest}  as our test statistics.  Hence, for $r=1,\,2$, the null hypothesis $H_0:c=0$ is rejected in favor of the two-sided alternative $H_1: c\neq0$  if $Q^{(r)}_{m,n} > q_{m,1-\alpha}$, where $q_{m,1-\alpha}$ denotes the $(1-\alpha)$-quantile of the chi-square distribution with $m$ degrees of freedom.

Figure \ref{Fig.STNQmn} depicts the sample trajectories pertaining
to the two-sided test statistics $Q^{(1)}_{m,n}$ and $Q^{(2)}_{m,n}$
in companion with critical values at the nominal size $\alpha=0.05$ and with
respect to the referenced stations \emph{STN37--Berlin},
\emph{STN48--M\"{u}nchen}, \emph{STN49--M\"{u}nster} and \emph{STN40--Frankfurt}.  It seems that the hypothesis of no trend has to be rejected in case of \emph{STN40--Frankfurt}. The two tests also ascertain a non-null trend for \emph{STN48--M\"{u}nchen} and \emph{STN49--M\"{u}nster}. There is no evidence of a particular trend at \emph{STN37--Berlin}.

\begin{figure}
\vspace{6pc}
\caption{\footnotesize Sample paths of $Q^{(1)}_{18,n}$ and $Q^{(2)}_{18,n}$ test statistics and corresponding critical values for the two-sided test at a significance level $\alpha=0.05$ $(\chi^{-1}_{0.95}(17)=8.67)$ for several stations across Germany.}
\begin{center}
\includegraphics*[scale=0.4]{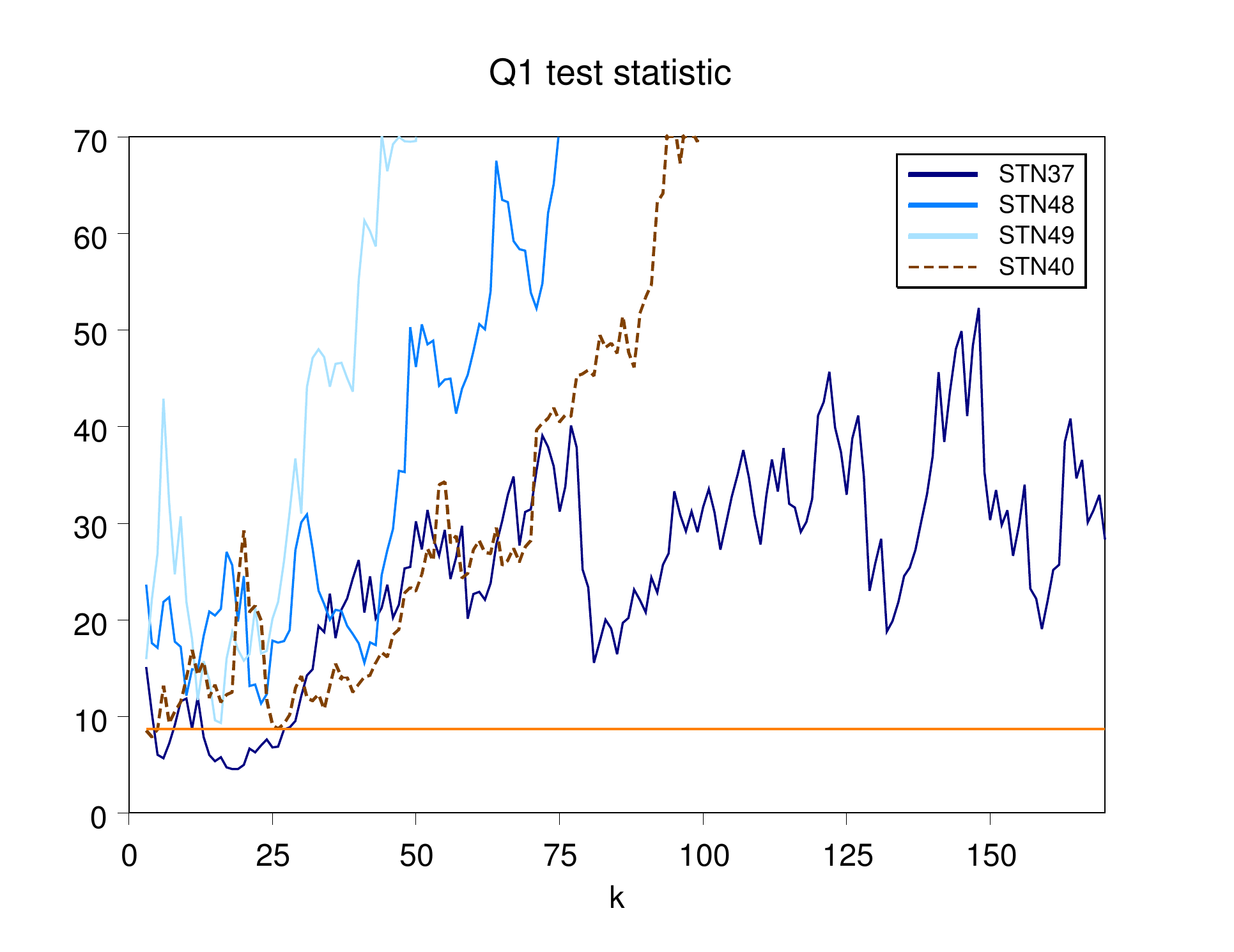}
\includegraphics*[scale=0.4]{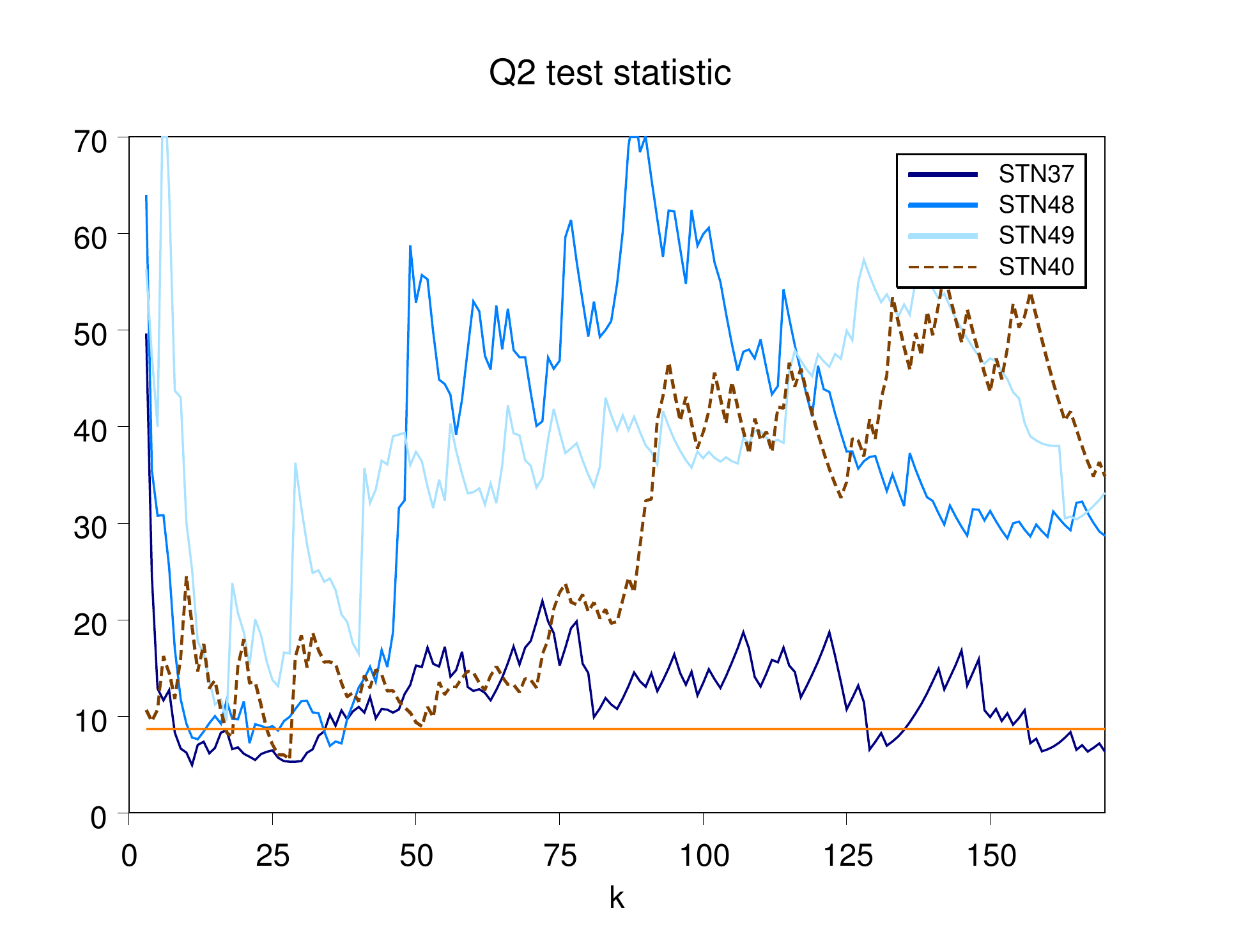}
 \label{Fig.STNQmn}
\end{center}
\end{figure}

\subsection{Discussion}
Broadly speaking, the stations in the
Oceanic group present higher positive values of $c$ (but not always smaller values of $\gamma$) than the Continental stations. Overall, fairly positive values of
$c$ may be interpreted as being influenced by the ocean.

\citet{AllenIngram:02} describe how the intensity of extreme rainfall
events depends on the availability of moisture. Because moisture
availability is constraint on temperature (through the
Clausius-Clapeyron relationship), an increase in rainfall extremes is
expected in a warming climate. \citet{Lenderinketal:09} show that higher
increases can be expected at locations that are under the influence of
the sea. Increasing sea surface temperatures contribute to higher
rainfall amounts. The results obtained in this study for the behavior
of extreme rainfall at locations in Germany and The Netherlands are
consistent with these findings. Overall, the Oceanic group of stations
shows a stronger increase in extreme rainfall than the Continental group
of stations.

Next we discuss the interpretation of $c$. There is no evidence of a significant trend in the extreme relative risk at \emph{STN37--Berlin}, meaning that the value $c=0$ can be assigned to this gauging station. If $c = 1.0$, like the estimated value at \emph{STN129--Eelde} (see Table \ref{TableEstTrendDNL}), then in view of the fact
that $s$ is measured in periods of 5 years, the probability of really heavy
rainfall increases approximately by $12.5\%$ during each decade. Figure \ref{Fig.LogRR} may help to clarify the contrast in these values by plotting the empirical log-relative risk 
\begin{equation*}
        \log\,
        \frac{1-\widehat{F}_s\bigl(X_{n-k,n}(0)\bigr)}{1-\widehat{F}_0\bigl(X_{n-k,n}(0)\bigr)}
        \approx cs
    \end{equation*}
against several values of $k$ and for $s_j=j/m \in [0,1]$. At \emph{STN129--Eelde}, there is no clear evidence of a trend, whereas  at \emph{STN129--Eelde} the estimated log-relative risk seems to increase as $s_j$ approaches 1.

\begin{figure}
\vspace{6pc}
\caption{\footnotesize Estimated $\log$-relative risk for $s_j= j/17$, with $j=1,2,\ldots, 17$ marking periods of 5 years. Equal distances between $s_j$ are chosen to reflect time periods of equal lenght.}
\begin{center}
\includegraphics*[scale=0.4]{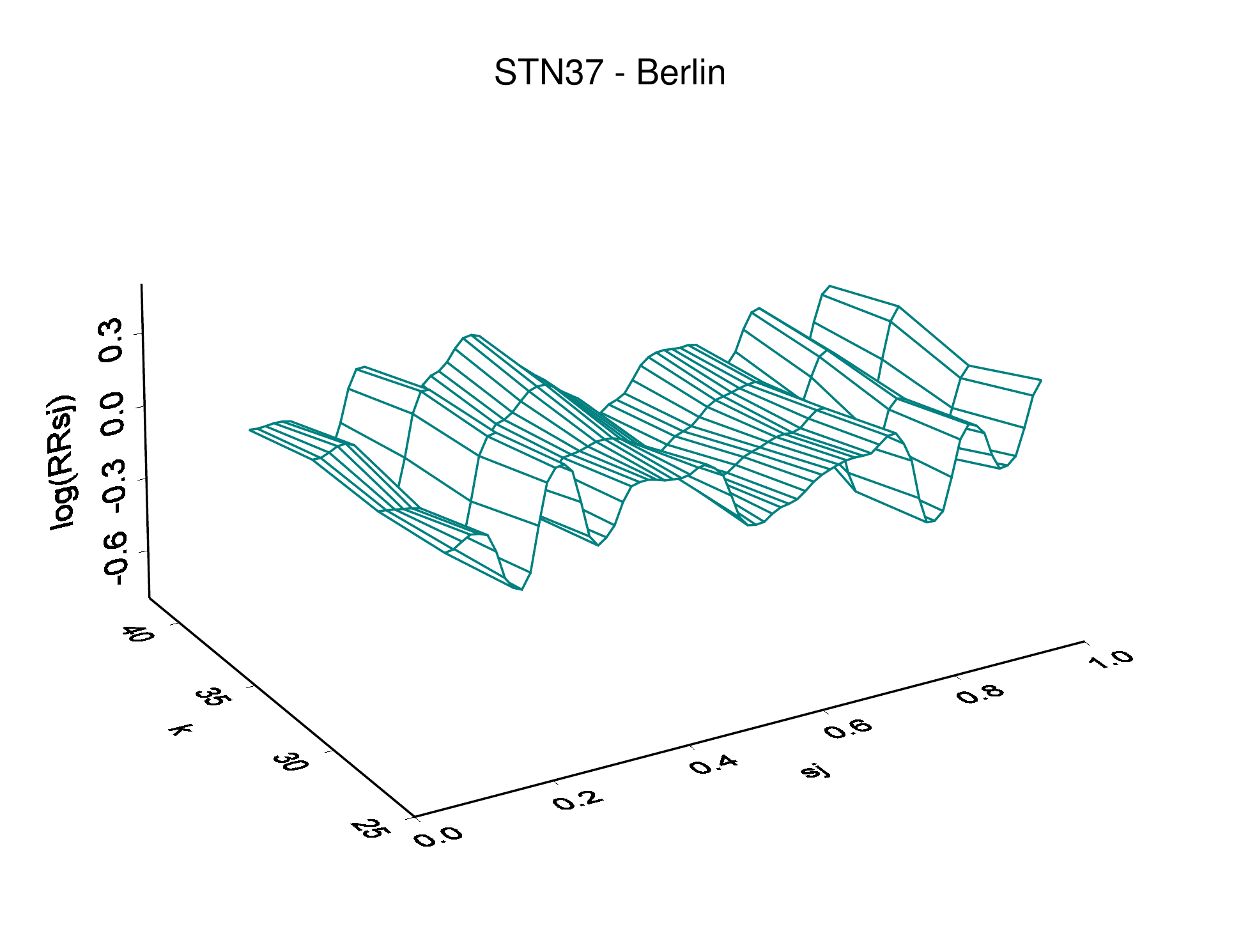}
\includegraphics*[scale=0.4]{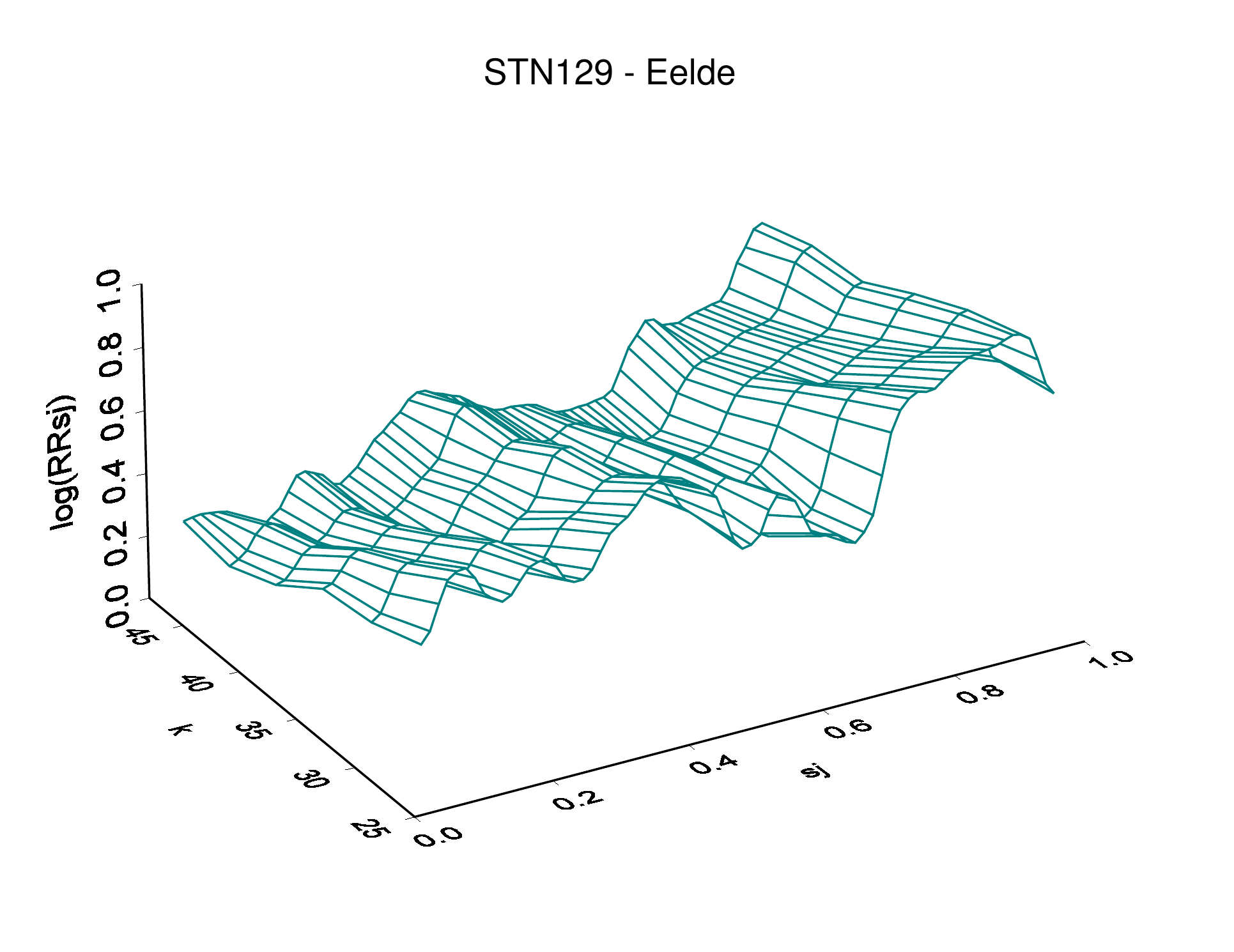}
 \label{Fig.LogRR}
\end{center}
\end{figure}

Similarly if $c = 0.2$,  which coincides with the estimate for the trend at \emph{STN40--Frankfurt} and  \emph{STN123--Westdorpe}, and it is approximately the case in \emph{STN49--M\"{u}nster}, then the probability of heavy rainfall increases in the same period
approximately by $2\%$. These results are in good agreement with the positive trend of 3\% per decade found by \citet{Zolinaetal:09} for the second half of the century (1950-2000) using a
different metric.

%======================================
\section{Proofs}
\label{SecProofs}

We shall use the following representation:
\begin{equation*}
    \bigl\{X_{n-i,n}(s_j)\bigr\}_{i=1}^n \id \bigl\{U_{s_j}(Y_{n-i,n}(s_j))\bigr\}_{i=1}^n
\end{equation*}
where $\{Y_{n-i,n}(s_j)\}_{i=1}^n$ are the $n-$th order statistics from
the distribution function $1-x^{-1}$, $x\geq 1$, independently for $j=1,2,\ldots,m$.

\subsubsection*{Proof of consistency}

\noindent For the consistency of $\hat{c}^{(1)}$ note that
$(k/n)\,Y_{n-k,n}(s_j)\, \displaystyle {\mathop {\longrightarrow}^{P}} \,1$,
$n\rightarrow \infty$ (cf. \citet{deHaanFerreira:06}, Corollary
2.2.2) and that $\lim_{t \rightarrow \infty}
U_s(tx)/U_s(t)= x^{\gamma^+}$  locally uniformly for $x>0$. Hence for
$j=1,\,2,\ldots,m$
\begin{equation*}
    \log X_{n-k,n}(s_j)-\log U_{s_j}\bigl( \ndivk\bigr) \id   -\log U_{s_j}\bigl( \ndivk \bigl\{ \kdivn Y_{n-k,n}(s_j) \bigr\}\bigr) -\log U_{s_j}\bigl( \ndivk\bigr) \conv{P} 0.
\end{equation*}
The rest is easy.

\noindent Similarly with respect to $\hat{c}^{(2)}$ we get that
\begin{equation}\label{RandLev2Us}
    \frac{X_{n-k,n}(s_j)-U_{s_j}\bigl(
    \ndivk\bigr)}{a_{s_j}\bigl(\ndivk\bigr)}\conv{P}0.
\end{equation}
This limit relation combined with relation \eqref{UsU0Dif} leads
directly to the consistency of $\hat{c}^{(2)}$.

\noindent With respect to the consistency of $\hat{c}^{(3)}$, we
begin by noting that the domain of attraction condition
\begin{equation*}
\limit{n} \, \ndivk\, P\Bigl\{X_l(0)> U_0\bigl(\ndivk \bigr)+ x\,
a_0\bigl(\ndivk \bigr)\Bigr\}= (1+\gamma x)^{-1/\gamma}
\end{equation*}
for all $l=1,2,\ldots$, combined with \eqref{TrendPropag} implies
\begin{equation*}
\limit{n} \, e^{-cs_j}\ndivk\, P\Bigl\{X_l(s_j)> U_0\bigl(\ndivk
\bigr)+ x\, a_0\bigl(\ndivk \bigr)\Bigr\}= (1+\gamma x)^{-1/\gamma},
\end{equation*}
$j=1,2,\ldots, m$. Hence the characteristic functions converge:
\begin{eqnarray*}
    & &E \exp\Bigl\{i \frac{t}{k}\sumab{l=1}{n}I_{\{X_l(s_j)> \,U_0(n/k)+x\,a_0(n/k)\}}\Bigr\}\\
    &=& \biggl(E \exp \Bigl\{i \frac{t}{k}\, I_{\{X_1(s_j)> \,U_0(n/k)+x\,a_0(n/k)\}} \Bigr\}\biggr)^n\\
    &=& \biggl( e^{i\frac{t}{k}}\,P\Bigl\{X_1(s_j)> U_0\bigl(\ndivk\bigr)+x\,a_0\bigl(\ndivk\bigr) \Bigr\}\\
    & & \mbox{\hspace{1.5cm}} + 1-P\Bigl\{X_1(s_j)> U_0\bigl(\ndivk\bigr)+x\,a_0\bigl(\ndivk\bigr) \Bigr\}\biggr)^n\\
    &=& \biggl(1+e^{cs_j}k\bigl(e^{it/k}-1\bigr)\,\frac{e^{-cs_j}(n/k)\bigl[1-F_{s_j}(U_0(n/k)+xa_0(n/k))\bigr]}{n} \biggr)^n \\
    &\arrowf{n} & \exp\Bigl\{it\, e^{cs_j}(1+\gamma x)^{-1/\gamma}\Bigr\},
\end{eqnarray*}
for every $t \in \real$. Owing to L\'{e}vy's continuity theorem, the
latter implies
\begin{equation*}
    \frac{1}{k}\sumab{i=1}{n}I_{\{X_i(s_j)> \,U_0(n/k)+x\,a_0(n/k)\}} \conv{P} e^{cs_j}(1+\gamma\, x)^{-1/\gamma}.
\end{equation*}
Next use \eqref{RandLev2Us}.\hfill \ding{111}
\\

For the proof of the asymptotic normality we need an auxiliary result.

%----------------------------- Lemma -----------------------------------------
\begin{lem}\label{LemERVUs}
Assume conditions \eqref{2ndERVU0} and \eqref{2ndUs2U0Dif}. Define
\begin{eqnarray}\label{2ndRVaAs}
\nonumber    \alpha_s(t)&:=& e^{cs\rho}\alpha_0(t),\\
             a_s(t)&:=& e^{cs\gamma}a_0(t)\Bigl(1+\alpha_0(t)\,\frac{e^{cs\rho}-1}{\rho}
             \Bigr).
\end{eqnarray}
Then for $s\in \real$ and $x>0$
\begin{equation}\label{2ndRVas2a0}
    \limit{t}\frac{\frac{U_s(tx)-U_s(t)}{a_s(t)}-\frac{x^\gamma-1}{\gamma}}{\alpha_s(t)}
    =H_{\gamma,\rho}(x).
\end{equation}
\end{lem}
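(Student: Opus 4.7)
My plan is to derive the expansion of $U_s(tx)-U_s(t)$ directly from the three second‑order hypotheses and then divide by $a_s(t)$ with the scale correction specified in \eqref{2ndRVaAs}.

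\smallskip
\noindent\textbf{Step 1 (Decomposition).} Write
\begin{equation*}
U_s(tx)-U_s(t)=\bigl[U_s(tx)-U_0(tx)\bigr]-\bigl[U_s(t)-U_0(t)\bigr]+\bigl[U_0(tx)-U_0(t)\bigr].
\end{equation*}
Apply \eqref{2ndUs2U0Dif} at the arguments $tx$ and $t$ to the first two brackets, and apply \eqref{2ndERVU0} to the last bracket. Abbreviating $C:=(e^{c\gamma s}-1)/\gamma$ and $D:=H_{\gamma,\rho}(e^{cs})$, this gives
\begin{equation*}
U_s(u)-U_0(u)= C\,a_0(u)+D\,a_0(u)\alpha_0(u)+o\bigl(a_0(u)\alpha_0(u)\bigr),\qquad u=t,\,tx.
\end{equation*}

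\smallskip
\noindent\textbf{Step 2 (Reduction to the common scales $a_0(t)$ and $\alpha_0(t)$).} I would then invoke the two well‑known consequences of second‑order extended regular variation (cf.\ de Haan--Ferreira, Appendix B): for a suitable version of the auxiliary function,
\begin{equation*}
\frac{a_0(tx)}{a_0(t)}=x^{\gamma}\!\left(1+\alpha_0(t)\,\frac{x^{\rho}-1}{\rho}+o(\alpha_0(t))\right),\qquad \frac{\alpha_0(tx)}{\alpha_0(t)}\longrightarrow x^{\rho},
\end{equation*}
so that $a_0(tx)\alpha_0(tx)=x^{\gamma+\rho}a_0(t)\alpha_0(t)+o(a_0(t)\alpha_0(t))$. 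Substituting these into the decomposition in Step~1 produces
\begin{equation*}
U_s(tx)-U_s(t)=a_0(t)\,(x^{\gamma}-1)\!\left(\tfrac{1}{\gamma}+C\right)+a_0(t)\alpha_0(t)\,\Phi(x,s)+o\bigl(a_0(t)\alpha_0(t)\bigr),
\end{equation*}
where $\Phi(x,s):=H_{\gamma,\rho}(x)+C\,x^{\gamma}\,\tfrac{x^{\rho}-1}{\rho}+D\,(x^{\gamma+\rho}-1).$ Because $\tfrac{1}{\gamma}+C=\tfrac{e^{c\gamma s}}{\gamma}$, the leading term is $e^{c\gamma s}a_0(t)(x^{\gamma}-1)/\gamma$, which after dividing by $a_s(t)=e^{c\gamma s}a_0(t)(1+\alpha_0(t)(e^{c\rho s}-1)/\rho)$ yields $(x^{\gamma}-1)/\gamma$ modulo an $O(\alpha_0(t))$ correction that must be tracked.

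\smallskip
\noindent\textbf{Step 3 (Identification of the second‑order limit).} Expanding $(1+\alpha_0(t)(e^{c\rho s}-1)/\rho)^{-1}$ and collecting terms of order $\alpha_0(t)$, the claim \eqref{2ndRVas2a0} reduces to the algebraic identity
\begin{equation*}
\Phi(x,s)\;=\;e^{c\gamma s}\,\frac{e^{c\rho s}-1}{\rho}\cdot\frac{x^{\gamma}-1}{\gamma}+e^{c(\gamma+\rho)s}\,H_{\gamma,\rho}(x).
\end{equation*}
One verifies this by expanding both sides in the basis $\{1,x^{\gamma},x^{\gamma+\rho}\}$ and checking the three coefficients separately; all exponential cross‑terms cancel, and what remains is exactly $e^{c(\gamma+\rho)s}H_{\gamma,\rho}(x)$, which is the content of $\alpha_s(t)H_{\gamma,\rho}(x)$ after noting that $a_s(t)\alpha_s(t)=e^{c(\gamma+\rho)s}a_0(t)\alpha_0(t)+o(a_0(t)\alpha_0(t))$.

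\smallskip
\noindent\textbf{Main obstacle.} The only real difficulty is the bookkeeping in Step~3: there are several $\alpha_0(t)$‑contributions (one from the second‑order behavior of $a_0$, one from $\alpha_0(tx)/\alpha_0(t)\to x^{\rho}$, one from \eqref{2ndERVU0}, and one from the correction term inside $a_s(t)$) that must combine perfectly into $e^{c(\gamma+\rho)s}H_{\gamma,\rho}(x)$. The special form of $a_s(t)$ in \eqref{2ndRVaAs}—in particular the factor $(1+\alpha_0(t)(e^{c\rho s}-1)/\rho)$—is precisely what is needed to absorb the extra term $e^{c\gamma s}(e^{c\rho s}-1)(x^{\gamma}-1)/(\rho\gamma)$ in $\Phi$; any other choice would leave a residual that is not a multiple of $H_{\gamma,\rho}(x)$. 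Finally, the degenerate cases $\gamma=0$, $\rho=0$, or $\gamma+\rho=0$ are handled by continuity in the usual way.
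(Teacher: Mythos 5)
Your argument is correct and follows essentially the same route as the paper's proof: the same three-term decomposition of $U_s(tx)-U_s(t)$ through $U_0$, the same appeal to the induced second-order relations $\frac{a_0(tx)}{a_0(t)}=x^{\gamma}\bigl(1+\alpha_0(t)\frac{x^{\rho}-1}{\rho}+o(\alpha_0(t))\bigr)$ and $\alpha_0(tx)/\alpha_0(t)\to x^{\rho}$, and the same final reduction to an algebraic identity equivalent to $\Phi(x,s)=e^{c\gamma s}\frac{e^{c\rho s}-1}{\rho}\frac{x^{\gamma}-1}{\gamma}+e^{c(\gamma+\rho)s}H_{\gamma,\rho}(x)$, which indeed checks out coefficient by coefficient. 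The only differences are organizational (the paper works with the $\alpha_0$-normalized quotient and then rescales to $a_s,\alpha_s$, whereas you expand $U_s(tx)-U_s(t)$ directly), so no changes are needed.
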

\begin{proof}
For simplicity we write $d$ for $e^{cs}$ in this proof.

\noindent Relation \eqref{2ndUs2U0Dif} implies \citep[cf.][p.44]{deHaanFerreira:06} that for $x>0$
\begin{equation*}
\limit{t}\, \frac{\frac{a_0(tx)}{a_0(t)}-x^{\gamma}}{\alpha_0(t)}= x^{\gamma}\frac{x^{\rho}-1}{\rho}
\end{equation*}
and
\begin{equation*}
\limit{t}\, \frac{\alpha_0(tx)}{\alpha_0(t)}=x^{\rho}.
\end{equation*}

\noindent First note that \eqref{2ndERVU0} and \eqref{2ndUs2U0Dif} imply that
\begin{eqnarray*}
  \frac{\frac{U_s(tx)-U_s(t)}{a_0(t)}-d^{\gamma}\frac{x^{\gamma}-1}{\gamma}}{\alpha_0(t)} &=& \frac{\frac{U_0(tx)-U_0(t)}{a_0(t)}-\frac{x^{\gamma}-1}{\gamma}}{\alpha_0(t)}+\frac{\frac{a_0(tx)}{a_0(t)}-x^{\gamma}}{\alpha_0(t)}\,\frac{d^{\gamma}-1}{\gamma} \\
  & & \mbox{\hspace{0.8cm}} +\Bigl(\frac{\alpha_0(tx)\,a_0(tx)}{\alpha_0(t)\,a_0(t)}-1\Bigr)H_{\gamma,\rho}(d)\bigl( 1+o(1)\bigr)
\end{eqnarray*}
converges to
\begin{equation*}
    H_{\gamma,\rho}(x)+x^{\gamma}\frac{x^{\rho}-1}{\rho}\,\frac{d^{\gamma}-1}{\gamma}+ (x^{\gamma+\rho}-1)H_{\gamma,\rho}(d).
\end{equation*}
\noindent Next write
\begin{equation*}
  \frac{\frac{U_s(tx)-U_s(t)}{a_s(t)}-\frac{x^{\gamma}-1}{\gamma}}{\alpha_s(t)}
 = \frac{\alpha_0(t)}{\alpha_s(t)}\biggl( \frac{\frac{U_s(tx)-U_s(t)}{a_0(t)}-d^{\gamma}\frac{x^{\gamma}-1}{\gamma}}{\alpha_0(t)}\,\frac{a_0(t)}{a_s(t)}+\frac{x^{\gamma}-1}{\gamma}\frac{ \frac{d^{\gamma}a_0(t)}{a_s(t)}-1}{\alpha_0(t)}\biggr).
\end{equation*}
This converges to
\begin{equation*}
    d^{-(\gamma+\rho)}\Bigl(H_{\gamma,\rho}(x)+x^{\gamma}\frac{x^{\rho}-1}{\rho}\,\frac{d^{\gamma}-1}{\gamma}+(x^{\gamma+\rho}-1)H_{\gamma,\rho}(d)\Bigr)-\frac{x^{\gamma}-1}{\gamma}\,\frac{1-d^{-\rho}}{\rho}
\end{equation*}
which is equal to $H_{\gamma,\rho}(x)$.
\end{proof}

\begin{rem}
The analogue of Lemma \ref{LemERVUs} stemming from conditions \eqref{2ndRVU0} and \eqref{2ndUs2U0} -- i.e. $\gamma >0$ -- holds with the auxiliary function $\beta_s(t):= e^{c s \widetilde{\rho}}\beta(t)$. This
leads to the following relation for every $s \in \real$,
\begin{equation*}
    \limit{t} \frac{\frac{U_s(tx)}{U_s(t)}-x^{\gamma^+}}{\beta_s(t)}= x^{\gamma^+}\frac{x^{\widetilde{\rho}}-1}{\widetilde{\rho}}, \quad  x>0.
\end{equation*}
\end{rem}

\subsubsection*{Proof of asymptotic normality}

We write
\begin{eqnarray*}
    \sqrt{k}\, \bigl(\hat{c}^{(1)}-c \bigr)
    &=&\sqrt{k}\, \Bigl(
        \frac{1}{\hat{\gamma}^+_{n,k}}-\frac{1}{\gamma^+}\Bigr)\,\hat{\gamma}^+_{n,k}\,\hat{c}^{(1)}+
        \Bigl(\gamma^+ \sumab{j=1}{m}s^2_j \Bigr)^{-1}\\
    & & \mbox{\hspace{1.0cm}} \times  \biggl\{\sumab{j=1}{m}s_j\Bigl[\sqrt{k}\, \Bigl(\log X_{n-k,n}(s_j)- \log U_{s_j}\bigl(\ndivk\bigr) \Bigr)\\
    & & \mbox{\hspace{1.5cm}} -\sqrt{k}\, \Bigl(\log X_{n-k,n}(0)- \log U_{0}\bigl(\ndivk\bigr) \Bigr)\\
    & & \mbox{\hspace{2.0cm}}+\sqrt{k}\, \beta\bigl(\ndivk \bigr)\, \frac{\log U_{s_j}(n/k)-\log
        U_0(n/k)-c\gamma^+s_j}{\beta(n/k)}\Bigr]\biggr\}.
\end{eqnarray*}
The result follows from \eqref{BivANgammaPlus}, \eqref{2ndUs2U0} and \eqref{ConstBias}.\\

\noindent For $\hat{c}^{(2)}$ it is sufficient to consider
\begin{equation*}
    \sqrt{k}\, \Bigl\{\log \Bigl(1+\hat{\gamma}_{n,k}\frac{X_{n-k,n}(s_j)-X_{n-k,n}(0)}{\hat{a}_0\bigl(\ndivk \bigr)} \Bigr)^{\frac{1}{\hat{\gamma}_{n,k}}}-cs_j\Bigr\}
\end{equation*}
for $j=1,\,2,\ldots,m$ where $\hat{\gamma}_{n,k}= 1/m\sum_{j=1}^{m}\hat{\gamma}_{n,k}(s_j)$. We use Cram\'{e}r's delta method.
\begin{equation*}
    \frac{\partial}{\partial \gamma}\, \log (1+\gamma
    x)^{\frac{1}{\gamma}}= \frac{1}{\gamma^2}\Bigl(\frac{\gamma x}{1+\gamma x}-\log(1+\gamma x) \Bigr)
\end{equation*}
(which is $x^2/2$ for $\gamma=0$) and
\begin{equation*}
    \frac{\partial}{\partial x}\, \log (1+\gamma
    x)^{\frac{1}{\gamma}}= \frac{1}{1+\gamma x}
\end{equation*}
($=1$ for $x=0$). Further we write
\begin{eqnarray*}
    & &\frac{X_{n-k,n}(s_j)-X_{n-k,n}(0)}{\hat{a}_0\bigl(\ndivk \bigr)}\\
    &=&\frac{a_0\bigl( \ndivk\bigr)}{\hat{a}_0\bigl(\ndivk \bigr)}\,\biggl\{\frac{a_{s_j}\bigl(\ndivk \bigr)}{a_0\bigl(\ndivk \bigr)} \frac{X_{n-k,n}(s_j)-U_{s_j}\bigl(\ndivk \bigr)}{a_{s_j}\bigl(\ndivk \bigr)}\\
    & & \mbox{\hspace{3.0cm}}-\frac{X_{n-k,n}(0)-U_0\bigl(\ndivk \bigr)}{a_0\bigl( \ndivk\bigr)}\,+\frac{U_{s_j}\bigl(\ndivk \bigr)-U_{0}\bigl(\ndivk \bigr)}{a_0\bigl(
    \ndivk\bigr)}\biggr\},
\end{eqnarray*}
implying (cf. \eqref{ScaleFct})
\begin{eqnarray*}
    & &\sqrt{k}\biggl(\frac{X_{n-k,n}(s_j)-X_{n-k,n}(0)}{\hat{a}_0\bigl(\ndivk \bigr)}-\frac{e^{c \gamma
    s_j}-1}{\gamma}\biggr)\\
    &=&
    \frac{e^{c \gamma s_j}-1}{\gamma}\,\sqrt{k}\,\biggl(\frac{a_0\bigl( \ndivk\bigr)}{\hat{a}_0\bigl(\ndivk \bigr)}-1\biggr)+ \frac{a_0\bigl( \ndivk\bigr)}{\hat{a}_0\bigl(\ndivk \bigr)}\biggl\{\frac{a_{s_j}\bigl(\ndivk \bigr)}{a_0\bigl(\ndivk \bigr)}\,\sqrt{k}\, \frac{X_{n-k,n}(s_j)-U_{s_j}\bigl(\ndivk \bigr)}{a_{s_j}\bigl(\ndivk \bigr)}\\
    & & \mbox{\hspace{1.0cm}}-\sqrt{k}\,\frac{X_{n-k,n}(0)-U_0\bigl(\ndivk \bigr)}{a_0\bigl( \ndivk\bigr)}\,+\sqrt{k}\,\biggl(\frac{U_{s_j}\bigl(\ndivk \bigr)-U_{0}\bigl(\ndivk \bigr)}{a_0\bigl(\ndivk\bigr)}-\frac{e^{c\gamma s_j}-1}{\gamma}\biggr)\biggr\}.
\end{eqnarray*}
Hence by \eqref{ScaleFct}
\begin{eqnarray*}
    & & \sqrt{k}\biggl(\frac{X_{n-k,n}(s_j)-X_{n-k,n}(0)}{\hat{a}_0\bigl(\ndivk \bigr)}-\frac{e^{c \gamma s_j}-1}{\gamma}\biggr)\\
    &\conv{d}& - \frac{e^{c \gamma
    s_j}-1}{\gamma}\,A(0)+ e^{c\gamma s_j}B(s_j)-B(0)+\lambda \,
    H_{\gamma, \rho}(e^{cs_j}).
\end{eqnarray*}
Next we apply the delta method:
\begin{eqnarray*}
    & &\sqrt{k}\Bigl\{\log\Bigl( 1+\hat{\gamma}_{n,k}\frac{X_{n-k,n}(s_j)-X_{n-k,n}(0)}{\hat{a}_0\bigl(\ndivk \bigr)}\Bigr)^{\frac{1}{\hat{\gamma}_{n,k}}} \\
    & & \mbox{\hspace{5.0cm}}-\log \Bigl(1+\gamma \, \frac{e^{c\gamma s_j}-1}{\gamma} \Bigr)^{\frac{1}{\gamma}}
    \Bigr\}\\
    & \conv{d}& \left[ \frac{\partial}{\partial \gamma}\, \log (1+\gamma
    x)^{\frac{1}{\gamma}}\right]_{x= \frac{e^{c \gamma
    s_j}-1}{\gamma}}\cdot \frac{1}{m}\sumab{i=1}{m}\Gamma(s_i) \\
    & & \mbox{\hspace{0.5cm}} + \left[ \frac{\partial}{\partial x}\, \log (1+\gamma
    x)^{\frac{1}{\gamma}}\right]_{x= \frac{e^{c \gamma
    s_j}-1}{\gamma}}\Bigl\{-\frac{e^{c\gamma s_j}-1}{\gamma}\, A(0)\\
    & & \mbox{\hspace{4.0cm}}+ e^{c\gamma s_j}B(s_j)-B(0)+\lambda \, H_{\gamma,
    \rho}(e^{cs_j})\Bigr\}\\
    &=& \frac{1-e^{-c \gamma s_j}-c \gamma s_j
    }{\gamma^2}\,\frac{1}{m}\sumab{i=1}{m}\Gamma(s_i)  + e^{-c\gamma s_j}\Bigl\{e^{c\gamma s_j}B(s_j)\\
    & & \mbox{\hspace{4.0cm}}-B(0)-\frac{e^{c\gamma s_j}-1}{\gamma}A(0)+\lambda \,H_{\gamma,
    \rho}(e^{cs_j})\Bigr\}.
\end{eqnarray*}
The result follows.\\

\noindent Finally for $\hat{c}^{(3)}$ consider first
\begin{equation}\label{ReducedProcess}
    \frac{1}{k}\sumab{i=1}{n}I_{\bigl\{X_i(s)>X_{n-k,n}(0)\bigr\}}= \frac{1}{k}\sumab{i=1}{n} I_{\bigl\{\frac{X_i(s)-U^{\star}_s(n/k)}{a^{\star}_s(n/k)}>\frac{X_{n-k,n}(0)-U^{\star}_s(n/k)}{a^{\star}_s(n/k)}\bigr\}}.
\end{equation}
We write
\begin{equation*}
    x_n(s):= \frac{X_{n-k,n}(0)-U^{\star}_s\bigl( \ndivk\bigr)}{a^{\star}_s\bigl( \ndivk\bigr)},
\end{equation*}
with $U^{\star}$ and $a^{\star}$ from Corollary 2.3.7 (but with a different notation since we use the subscript 0 for another purpose here) of
\citet{deHaanFerreira:06}, coupled with Lemma
\ref{LemERVUs},
\begin{eqnarray*}
    \alpha_s^{\star}(t)&:=& \left\{
                     \begin{array}{ll}
                       \alpha_0(t)\frac{e^{cs\rho}}{\rho}, & \mbox{ } \rho <0, \\
                       \alpha_0(t), & \mbox{   } \rho=0,
                     \end{array}
                   \right.\\
    a_s^{\star}(t)&:=& \left\{
                         \begin{array}{ll}
                           e^{cs\gamma}a_0(t)\bigl( 1-\alpha_0(t)\frac{1}{\rho}\bigr), & \mbox{ } \rho<0,\\
                           e^{cs\gamma}a_0(t)\bigl(1+\alpha_0(t)\bigl\{cs-\frac{1}{\gamma}\bigr\} \bigr), & \hbox{ }\rho=0\neq \gamma, \\
                           a_0(t)\bigl(1+\alpha_0(t) cs\bigr), & \hbox{ }\gamma=\rho=0,
                         \end{array}
                       \right.\\
    U_s^{\star}(t)&:=& \left\{
                     \begin{array}{ll}
                       U_s(t)-\frac{e^{cs(\gamma+\rho)}}{\gamma+\rho}\,a_0^{\star}(t)\alpha^{\star}_0(t), & \mbox{ }\gamma+\rho \neq 0, \, \rho <0, \\
                       U_s(t), & \mbox{   otherwise},
                     \end{array}
                   \right.
\end{eqnarray*}
and write \eqref{ReducedProcess} as
\begin{equation*}
    \ndivk \, \Bigl\{1-F_n^{(s)}\Bigl( U^{\star}_s\bigl(\ndivk\bigr)+x_n(s)\,a^{\star}_s\bigl(\ndivk \bigr)\Bigr)\Bigr\}.
\end{equation*}
where $F_n^{(s)}$ is the empirical distribution function of the random sample $X_1(s), X_2(s), \ldots, X_n(s)$.
\noindent We consider $x_n(s)$ first. We use
\eqref{2ndUs2U0Dif} and Theorem 2.4.2 of \citet{deHaanFerreira:06}.
\begin{eqnarray}\label{ANxn}
\nonumber & &  \sqrt{k}\Bigl( x_n(s)-\frac{e^{-cs\gamma}-1}{\gamma}\Bigr)\\
 \nonumber &=& \sqrt{k}\Bigl( \frac{X_{n-k,n}(0)-U^{\star}_s\bigl(\ndivk\bigr)}{a^{\star}_s\bigl(\ndivk\bigr)}-\frac{e^{-cs\gamma}-1}{\gamma}\Bigr)\\
\nonumber  &=& \frac{a^{\star}_0\bigl(\ndivk\bigr)}{a^{\star}_s\bigl(\ndivk\bigr)}\, \sqrt{k} \,\frac{X_{n-k,n}(0)-U^{\star}_0\bigl(\ndivk\bigr)}{a^{\star}_0\bigl(\ndivk\bigr)}- \frac{a^{\star}_0\bigl(\ndivk\bigr)}{a^{\star}_s\bigl(\ndivk\bigr)}\,\sqrt{k}\Bigl( \frac{U^{\star}_s\bigl(\ndivk\bigr)-U^{\star}_0\bigl(\ndivk\bigr)}{a^{\star}_0\bigl(\ndivk\bigr)}-\frac{e^{cs\gamma}-1}{\gamma}\Bigr)\\
\nonumber    & &\qquad  -\sqrt{k}\Bigl(\frac{a^{\star}_0\bigl(\ndivk\bigr)}{a^{\star}_s\bigl(\ndivk\bigr)}-e^{-cs\gamma}\Bigr)\frac{e^{cs\gamma}-1}{\gamma}\\
\nonumber    &=& \frac{a^{\star}_0\bigl(\ndivk\bigr)}{a^{\star}_s\bigl(\ndivk\bigr)}\, \sqrt{k}\,  \frac{X_{n-k,n}(0)-U_0\bigl(\ndivk\bigr)}{a^{\star}_0\bigl(\ndivk\bigr)}-\frac{a^{\star}_0\bigl(\ndivk\bigr)}{a^{\star}_s\bigl(\ndivk\bigr)}\,\sqrt{k}\Bigl(\frac{U^{\star}_0\bigl(\ndivk\bigr)-U_0\bigl(\ndivk\bigr)}{a^{\star}_0\bigl(\ndivk\bigr)} +\frac{U^{\star}_s\bigl(\ndivk\bigr)-U^{\star}_0\bigl(\ndivk\bigr)}{a^{\star}_0\bigl(\ndivk\bigr)}-\frac{e^{cs\gamma}-1}{\gamma}\Bigr)\\
    & &\qquad  -\sqrt{k}\Bigl(\frac{a^{\star}_0\bigl(\ndivk\bigr)}{a^{\star}_s\bigl(\ndivk\bigr)}-e^{-cs\gamma}\Bigr)\frac{e^{cs\gamma}-1}{\gamma}.
\end{eqnarray}
The first term converges in distribution to $e^{-cs\gamma} W^{(0)}(1)$. The second term converges to $\lambda^\star b(s)$ defined by
\begin{equation*}
    b(s):= \left\{
                 \begin{array}{ll}
                    \frac{e^{-cs\gamma}}{\gamma+\rho}, & \, \gamma+\rho\neq 0, \, \rho <0,\\
                    -e^{-cs\gamma}cs, & \,  \gamma+\rho= 0, \, \rho <0, \\
                    -\frac{cs}{\gamma}, & \,\rho=0\neq \gamma, \\
                    -\frac{1}{2}(cs)^2, & \, \gamma=\rho=0
                 \end{array}
               \right.
\end{equation*}
and the third term converges to 
\begin{equation*}
\lambda^\star e^{-cs\gamma}\, \frac{e^{cs\gamma}-1}{\gamma}cs\, I_{\{\rho=0\}}.
\end{equation*}
Here
\begin{equation*}
\lambda^{\star}:= \lim_{n\rightarrow \infty} \sqrt{k}\, \alpha^{\star}_0(n/k)=\left\{
                 \begin{array}{ll}
                    \lambda/\rho, & \, \rho <0\\
                     \lambda, & \, \rho=0.
                 \end{array}
               \right.
\end{equation*}
\noindent Further by Theorem 5.1.2 of \citet{deHaanFerreira:06},
since $x=x_n$ is asymptotically constant,
\begin{eqnarray}\label{TrendProcess}
\nonumber& &\sqrt{k}\biggl\{\ndivk \, \biggl(1-F_n^{(s)}\Bigl( U^{\star}_s\bigl(\ndivk\bigr)+x_n(s)\,a^{\star}_s\bigl(\ndivk \bigr)\Bigr)\biggr)- \bigl(1+\gamma\,x_n(s) \bigr)^{-\frac{1}{\gamma}}\biggr\} \\
& & \mbox{\hspace{1.0cm}} -W_n^{(s)}\Bigl(\bigl(1+\gamma\,x_n(s) \bigr)^{-\frac{1}{\gamma}} \Bigr) \\
\nonumber& & \mbox{\hspace{2.0cm}} -\sqrt{k}\,\alpha^{\star}_s\bigl(\ndivk\bigr)\bigl(1+\gamma\,x_n(s)
\bigr)^{-\frac{1}{\gamma}-1}\overline{\Psi}_{\gamma,\rho}\Bigl(\bigl(1+\gamma\,x_n(s)
\bigr)^{\frac{1}{\gamma}}\Bigr)\conv{P}0
\end{eqnarray}
for a sequence of standard Brownian motions $\{W^{(s)}_n(t)\}_{t\geq
0}$ and with
\begin{equation*}
\overline{\Psi}_{\gamma,\rho}(x):= \left\{
                              \begin{array}{ll}
                                \frac{x^{\gamma+\rho}}{\gamma+\rho}, & \mbox{ } \gamma+\rho \neq 0, \, \rho <0, \\
                                \log x, & \mbox{ }  \, \gamma+\rho= 0, \, \rho <0,\\
                                \frac{1}{\gamma}\,x^{\gamma} \log x, & \mbox{ }\rho =0\neq \gamma ,\\
                                \frac{1}{2}\, (\log x)^2, & \mbox{ } \gamma= \rho=0.
                              \end{array}
                            \right.
\end{equation*}
Finally we write 
\begin{eqnarray*}
    & & \sqrt{k}\,\biggl\{\ndivk \, \biggl(1-F_n^{(s)}\Bigl( U^{\star}_s\bigl(\ndivk\bigr)+x_n(s)\,a^{\star}_s\bigl(\ndivk \bigr)\Bigr)\biggr)-e^{cs} \biggr\} \\
    &=& \sqrt{k}\biggl\{\ndivk \, \biggl(1-F_n^{(s)}\Bigl( U^{\star}_s\bigl(\ndivk\bigr)+x_n(s)\,a^{\star}_s\bigl(\ndivk \bigr)\Bigr)\biggr)- \bigl(1+\gamma\,x_n(s) \bigr)^{-\frac{1}{\gamma}}\biggr\}\\
    & &  \mbox{\hspace{1.0cm}} +\sqrt{k} \biggl\{ \bigl(1+\gamma\,x_n(s) \bigr)^{-\frac{1}{\gamma}}- \Bigl(1+\gamma\,\frac{e^{-cs\gamma}-1}{\gamma}\Bigr)^{-\frac{1}{\gamma}} \biggr\}.
\end{eqnarray*}
Since $x_n(s)\conv{P}(e^{-cs\gamma}-1)/\gamma$, by \eqref{TrendProcess} the first term converges in distribution to
\begin{equation*}
W^{(s)}(e^{cs})-\lambda^{\star} e^{cs(\gamma+1)}\overline{\Psi}_{\gamma,\rho}(e^{-cs}).
\end{equation*}
By \eqref{ANxn} and the delta method the second term converges to
\begin{equation*}
-e^{cs} W^{(0)}(1)-\lambda^\star e^{cs(\gamma+1)}\Bigl(b(s)+\frac{1-e^{-cs\gamma}}{\gamma}\, cs\, I_{\{\rho=0\}}\Bigr).
\end{equation*}
The result follows by Carm\'er's delta method again,
\begin{eqnarray*}
& & \sqrt{k}\,\biggl\{\log\biggl[\ndivk \, \biggl(1-F_n^{(s)}\Bigl( U^{\star}_s\bigl(\ndivk\bigr)+x_n(s)\,a^{\star}_s\bigl(\ndivk \bigr)\Bigr)\biggr)\biggr]-cs \biggr\} \\
&\conv{d}& e^{-cs}W^{(s)}(e^{cs})-W^{(0)}(1)-\lambda^{\star}e^{cs\gamma}\Bigl( \overline{\Psi}(e^{-cs})+b(s)+ \frac{1-e^{-cs\gamma}}{\gamma}\, cs\, I_{\{\rho=0\}}\Bigr).
\end{eqnarray*}
\hfill \ding{111}
\\

\begin{proof}{[of Corollary \ref{CorTest}]}
By virtue of Rao's theorem (\citet{Rao:73}, Section 3.b.4; see also
\citet{Serfling:02}, p.128) pertaining to quadratic forms of
asymptotically normal random vectors, statements
\eqref{TestStat1} and \eqref{TestStat2} follow immediately from the
theorem.
\end{proof}

%---------------------------------------------------------------------------

%===APPENDIX==========================

\appendix
\section{Proofs concerning relations \eqref{TrendPropag}, \eqref{UsU0Dif}, \eqref{Us2U0}, \eqref{ScaleFct}}\label{AppRel}

\underline{Proof} of \eqref{TrendPropag}$\Leftrightarrow$  \eqref{UsU0Dif}.

We use the following result of S.I. Resnick \citep[][Lemma 1.2.12 p.23]{Resnick:71, deHaanFerreira:06}. Let $F_1$ and $F_2$ be two probability distribution functions and let $F_1\in \mc{D}(G_{\gamma})$ for some $\gamma \in \real$. The following two statements are equivalent (with $x^{*}$ the right endpoint of $F_1$).
\begin{description}
\item[(i)] $\qquad \displaystyle{\lim_{x \uparrow x^*}\frac{1-F_2(x)}{1-F_1(x)}=1}$
\item[(ii)] $\qquad \displaystyle{\limit{t} \frac{U_2(t)-U_1(t)}{a_1(t)}=0}$ 
\end{description}
where $U_i:=\bigl(1/(1-F_i)\bigr)^{\leftarrow}$, $i=1,2$. Define the distribution function $F^*_s$ by
\begin{equation*}
1-F_s^*(x)= \min \bigl(1,\,e^{-cs}(1-F_s(x)\bigr).
\end{equation*}
Then (using (i), (ii))
\begin{equation*}
\eqref{TrendPropag} \; \Leftrightarrow \; \lim_{x \uparrow x^*}\frac{1-F^*_s(x)}{1-F_0(x)}=1  \; \Leftrightarrow \; \limit{t} \frac{U_s^*(t)-U_0(t)}{a_0(t)}=0
\end{equation*}
which holds if and only if (since $U_s^*(t)=\bigl(1/(e^{-sc}(1-F_s))\bigr)^{\leftarrow}(t)= U_s(t\,e^{-cs})$)
\begin{equation*}
\limit{t}\, \frac{U_s(t\,e^{-sc})-U_0(t)}{a_0(t)}=0.
\end{equation*}
Hence \eqref{TrendPropag} holds if and only if
\begin{equation*}
\frac{U_s(t)-U_0(t)}{a_0(t)}= \frac{U_s(t)-U_0(t\,e^{cs})}{a_0(t\,e^{cs})}\,\frac{a_0(t\, e^{cs})}{a_0(t)}+ \frac{U_0(t\,e^{cs})-U_0(t)}{a_0(t)} \longrightarrow 0+\frac{e^{\gamma c s}-1}{\gamma}
\end{equation*}
as  $t \rightarrow \infty$. Here we have used that $F_0\in \mc{D}(G_{\gamma})$ implies
\begin{equation*}
\limit{t}\, \frac{U_0(tx)-U_0(t)}{a_0(t)}= \frac{x^{\gamma}-1}{\gamma} \qquad \mbox{ and } \qquad \limit{t} \, \frac{a_0(tx)}{a_0(t)}=x^{\gamma}
\end{equation*}
for $x>0$.

\vspace{1.0cm}

\noindent \underline{Proof} of \eqref{TrendPropag}$\Leftrightarrow$  \eqref{Us2U0} for $\gamma >0$.

The proof of Lemma 1.2.12 in \citet{deHaanFerreira:06} shows that for $\gamma >0$ the following statements are equivalent.
\begin{description}
\item[(i)] $\qquad \displaystyle{\limit{x}\frac{1-F_2(x)}{1-F_1(x)}=1}$
\item[(ii)] $\qquad \displaystyle{\limit{t} \frac{U_2(t)}{U_1(t)}=1}.$ 
\end{description}
The rest of the proof is very similar to the previous one and is omitted.

\vspace{1.0cm}

\noindent \underline{Proof} of \eqref{TrendPropag}$\Leftrightarrow$ \eqref{ScaleFct}.

$F_s \in \mc{D}(G_{\gamma})$ implies
\begin{equation}\label{aux0}
	\limit{t} \, t\, \bigl\{1-F_s\bigl(U_s(t)+x a_s(t)\bigr)\bigr\}= (1+\gamma x)^{-1/\gamma}.
\end{equation}
Relation \eqref{TrendPropag} implies
\begin{equation}\label{aux1}
	\limit{t} \, t\, \bigl\{1-F_s\bigl(U_0(t)+x a_0(t)\bigr)\bigr\}= e^{cs}(1+\gamma x)^{-1/\gamma}.
\end{equation}
We combine this with relation \eqref{ERVUs} and
\begin{equation}\label{aux2}
	\limit{t}\, \frac{a_0(t\,e^{sc})}{a_0(t)}= e^{sc \gamma}.
\end{equation}
We then get
\begin{eqnarray*}
& & t\,\bigl\{ 1-F_s\bigl(U_0(t\,e^{sc}) -x a_0(t\, e^{sc})\bigr)\bigr\}\\
&=&  t\,\Bigl\{ 1-F_s\Bigl( U_0(t) +\frac{e^{s c \gamma -1}}{\gamma}\,a_0(t)\bigl(1+o(1)\bigr)+x\, a_0(t) e^{sc\gamma}\bigl(1+o(1)\bigr)\Bigr)\Bigr\}\\
&=& t\, \Bigl\{ 1-F_s\Bigl(U_0(t) + a_0(t) \bigl(\frac{e^{cs\gamma}-1}{\gamma}+x\,e^{sc\gamma}\bigr)\bigl(1+o(1)\bigr)\Bigr)\Bigr\}\\
&\longrightarrow & e^{cs}\Bigl( 1+\gamma \Bigl\{\frac{e^{sc\gamma}-1}{\gamma}+ x\, e^{sc\gamma} \Bigr\}\Bigr) \, =\, (1+\gamma x)^{-1/\gamma}
\end{eqnarray*}
as $t\rightarrow \infty$. Comparing this with \eqref{aux0} we conclude by Khinchine's convergence-to-types theorem
\begin{equation*}
	\limit{t}\, \frac{U_s(t)-U_0(t)}{a_0(t)}=0 \qquad \mbox{and} \qquad \limit{t}\, \frac{a_s(t)}{a_0(t\, e^{sc})}=1.
\end{equation*}
The latter relation gives the result by \eqref{aux2}.

%--------------------------------------------------------------------------
\section{Sketch of alternative approaches}\label{app}

The subject of extreme value theory (EVT) is the study of the right (or left) tail of a probability distribution near the endpoint. Hence by nature EVT is an asymptotic theory. The basic assumption is
\begin{equation}\label{EVTn}
\limit{n} P\bigl\{\frac{\max(X_1,\, X_2, \ldots,X_n)-b_n}{a_n}\leq x\bigr\}= \exp\bigl\{-(1+\gamma\, x)^{-1/\gamma}\bigr\},
\end{equation}
where $X_1, X_2, \ldots$ are i.i.d. random variables. It follows that the limit distribution has only one parameter, the shift $b_n \in \real$ and scale $a_n>0$ are not parameters of the limit distribution. They depend essentially on the distribution of $X_1$.

When it comes to statistics there are three basic methods:\\

\textbf{1}. \underline{Yearly maxima} (or block maxima). Over a number of years one takes the yearly maximum. Since the yearly maximum is taken over many underlying random variables (albeit not i.i.d.) the assumption is that the yearly maximum $M_j$ can be considered the maximum over a large number $n$ of i.i.d random variables so that
\begin{equation*}
P\bigl\{M_j \leq x\bigr\}\approx \exp \Bigl\{-\Bigl(1+\gamma\,\frac{x-b_n }{a_n} \Bigr)^{-\frac{1}{\gamma}}\Bigr\}
\end{equation*}
where $n$ is unknown. The random variables $M_j$ are i.i.d.. The right hand-side can then be interpreted as a parametric model (GEV: Generalized Extreme Value distribution) so that e.g. the method of maximum likelihood can be applied.

The interpretation of $b_n$ is: the level that has a return period (the mean time between consecutive exceedances of  the level) of $e/(e-1)\approx 1.58$ years. Hence there is no direct intuitive meaning for $b_n$. Also the behavior of $b_n$ as $n \rightarrow \infty$ can not be found.
This method carries a bias stemming from replacing an approximate equality with a firm equality. In contrast to the next case it seems difficult to control that bias.\\

\textbf{2}. \underline{Peaks over threshold}. The basic assumption \eqref{EVTn} implies that with $b(t)=b_{[t]}$, $a(t)=a_{[t]}$ and $[t] $ the integer part of $t$ for $x>0$
\begin{equation}\label{EVTt}
\limit{t} P\Bigl\{\frac{X_1-b(t)}{a(t)}>x|X_{1}>b(t)\Bigr\}= (1+\gamma x)^{-1/\gamma}.
\end{equation}
Select out of $n$ i.i.d. observations the ones that are larger than $b(t)$. These are approximately i.i.d. and (when normalized) follow approximately the GPD distribution $1-(1+\gamma\,x)^{-1/\gamma}$ (\citet{Pickands:75}).

One can take for $b(t)$ one of the order statistics $X_{n-k,n}$. In order to get meaningful results we need to have $k=k_n$ and $k_n\rightarrow \infty$, $k(n)/n \rightarrow 0$ as $n\rightarrow \infty$. Then $X_{n-k,n}$ is close to $b(n/k)$, i.e., the quantile $F^{\leftarrow}(1-k/n)$.

Again, since for $x>b(t)$
\begin{equation*}
P\bigl\{X_1>x|X_1>b(t)\bigr\}\approx \Bigl( 1+\frac{x-b(t)}{a(t)}\Bigr)^{-1/\gamma},
\end{equation*}
one can consider the right hand-side as a parametric model so that e.g. the method of maximum likelihood can be applied.

Next one can prove that the obtained estimators are consistent and asymptotically normal as $n$, the number of observations, tends to infinity. That is, the vector
\begin{equation*}
\sqrt{k}\, \biggl( \frac{\hat{a}\bigl( \ndivk\bigr)}{a\bigl( \ndivk \bigr)}-1, \, \hat{\gamma}_{n,k}-\gamma\biggr)
\end{equation*}
has asymptotically a normal distribution( \citet{Smith:87, Dreesetal:04, Zhou:09}). There are also methods to minimize the bias by choosing the number $k$ appropriately.

\textbf{3.} \underline{Point process convergence}. Suppose that the basic assumption holds. Take the point process on $\real^2$ with points
\begin{equation}\label{PP}
\biggl\{\Bigl( \frac{i}{n}, \, \frac{X_i-b_n}{a_n}\Bigr)\biggr\}_{i=1}^n.
\end{equation}
This point process converges in distribution to a Poisson point process on $(0,1)\times \real$ with intensity measure $dt \cdot (1+\gamma x)^{-1/\gamma-1}dx$ (cf. \citet{Pickands:71}). Note that the intensity measure is unbounded. Those points in \eqref{PP} for which $(X_i-b_n)/a_n$ exceeds some threshold $u$ are approximately points from a Poisson point process with (finite) parametric intensity measure so that the method of maximum likelihood can be applied supplying estimators for $\gamma,\, b_n$ and $a_n$ (see \citet{Smith:89}, cf. \citet{Coles:01}, Chapter 7). No asymptotic behavior ($n\rightarrow \infty$, $u=u_n$ decreasing) seems to be known for these estimators.\\

The three methods have been explained in detail in the book of \citet{Coles:01}. A trend in the EVT analysis has been considered in all three methods.

$\mathbf{1'.}$ Chapter 6 of \citet{Coles:01} book treats trends in the block maxima / GEV setup. One considers time points $j=0,1,2,\ldots$ and assumes (for example)
\begin{equation*}
b_n(j)= b_n(0) + j\,c
\end{equation*}
or/and
\begin{equation*}
\log a_n(j)= \log a_n(0) + j\,c'.
\end{equation*}
As we saw before, $b_n$ is the level that has a return period of just $e/(e-1)$ years. The scale $a_n$ can be interpreted with some liberty as a derivative, i.e., speed of change of location. The interpretation of both seems less straightforward than that of \eqref{TrendPropag}.

There is also another complication. If one is interested in the location parameter over a longer period, say, of 2 years i.e. $n$ replaced with $2n$, the relation is
\begin{eqnarray*}
b_{2n}(j)&\approx& b_n(j)+a_n(j)\,\frac{2^{\gamma}-1}{\gamma}\\
	&\approx& b_{2n}(0)+j\,c+(a_n(j)-a_n(0))\,\frac{2^{\gamma}-1}{\gamma}.
\end{eqnarray*}
This is no longer a linear trend in general. Note that our framework, combining trends in  location and scale, is not bound to a certain period.

$\mathbf{2'.}$\underline{Peaks over threshold}. \citet{DavisonSmith:90} consider a linear trend in both $\gamma$ and $a(n/k)$. \citet{Coles:01}, p.119, considers a linear trend in $\log a(n/k)$. Estimation is done by maximum likelihood. No asymptotic analysis of the quality of the estimators as the number of observations tends to infinity is made. Again the interpretation of a trend in $a(n/k)$ or $\log a(n/k)$ seems difficult.

\citet{HallTajvidi:00} consider a nonlinear trend in $\gamma$. The method is likelihood based. No large sample results are given.\\

$\mathbf{3'.}$ \citet{Smith:89} (c.f. \citet{Coles:01} p.133 sqq.) considers a trend in the location
\begin{equation*}
b_n(j)=b_n(0)+j\,c
\end{equation*}
 (simplified) in the point process model. Estimation is by maximum likelihood. No asymptotic analysis ($n\rightarrow \infty$) is made. 
Another possible problem is that the trend for $b_n(j)$ could get out of range if $\gamma <0$.

In short: the present paper looks at changes in (tail) probabilities whereas in the literature changes in various quantiles have been considered. The two viewpoints are not equivalent.

%--------------------------------------------------------------------------
\section{An auxiliary result}
\label{AppParetos}

\begin{lem}
Let $X_1,X_2,X_3,\ldots$ be i.i.d. random variables with distribution function $F$. Let $X_{1,n}\leq X_{2,n}\leq  \ldots \leq  X_{n,n}$ be the $n$-th order statistics. Define $U:= \bigl(1/(1-F)\bigr)^{\leftarrow}$. If $F\in \mc{D}(G_{\gamma})$ for some $\gamma \in \real$, then
\begin{equation*}
\frac{X_{n-k,n}}{U\bigl(\frac{n}{k}\bigr)} \conv{P}\, 1,
\end{equation*}
as $n \rightarrow \infty$ where $k=k_n$, $k_n \rightarrow \infty$, $k_n/n \rightarrow 0$ as  $n \rightarrow \infty$.
\end{lem}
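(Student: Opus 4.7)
The strategy is to reduce to a standard Pareto tail via the probability integral transform, use the classical intermediate order statistic convergence, and then invoke local uniform convergence of the tail quantile function $U$.

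First I would use the representation
\begin{equation*}
X_{n-k,n} \stackrel{d}{=} U\bigl(Y_{n-k,n}\bigr),
\end{equation*}
where $Y_{1,n}\le \ldots \le Y_{n,n}$ are the order statistics of an i.i.d.\ sample of size $n$ from the standard Pareto distribution $1-y^{-1}$, $y\ge 1$ (this is the same representation set up at the start of Section \ref{SecProofs}). Then it suffices to prove $U(Y_{n-k,n})/U(n/k)\to 1$ in probability. The first ingredient is the well-known fact that the intermediate order statistic of the standard Pareto satisfies $(k/n)\,Y_{n-k,n}\conv{P}1$; this is Corollary 2.2.2 of \citet{deHaanFerreira:06}, and can be obtained quickly from the relation $Y_{n-k,n}\stackrel{d}{=}n/U_{k+1,n}$ with $U_{k+1,n}$ the $(k+1)$-st order statistic of $n$ i.i.d.\ uniform$(0,1)$ random variables, since $(n/k)\,U_{k+1,n}\conv{P}1$ by the weak law of large numbers for intermediate uniform order statistics.

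Writing $Z_n:=(k/n)\,Y_{n-k,n}$, so $Z_n\conv{P}1$, the problem becomes showing that
\begin{equation*}
\frac{U\bigl((n/k)\,Z_n\bigr)}{U(n/k)}\conv{P}1.
\end{equation*}
The second ingredient is the unified statement that, under $F\in \mc{D}(G_\gamma)$, one has $\lim_{t\to\infty} U(tx)/U(t)=x^{\gamma^+}$ for all $x>0$, with the convergence locally uniform on compact subsets of $(0,\infty)$. This is already used implicitly in the paper (cf.\ the very first line of the proof of consistency in Section \ref{SecProofs}) and follows from Karamata's uniform convergence theorem together with the de Haan characterization. With local uniform convergence in hand, a routine $\varepsilon$-$\delta$ argument converts the in-probability convergence $Z_n\to 1$ into $U((n/k)Z_n)/U(n/k)\to 1^{\gamma^+}=1$ in probability, which is the desired conclusion.

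The main technical point is verifying the unified ratio limit $U(tx)/U(t)\to x^{\gamma^+}$ across all sign regimes of $\gamma$: for $\gamma>0$, $U$ is regularly varying at infinity of index $\gamma$ so this is immediate; for $\gamma=0$ with $x^*=+\infty$, $U$ is slowly varying (the $\Pi$-varying representation combined with $a(t)=o(U(t))$ forces the ratio to $1$); and for $\gamma\le 0$ with $x^*<\infty$, one has $U(t)\to x^*$ so the ratio trivially tends to $1$. Once this is established, local uniformity follows from the standard uniform convergence theorem for regularly varying functions, and the rest is mechanical. I anticipate that the only subtle case is $\gamma=0$ with infinite right endpoint, where slow variation of $U$ (rather than merely $\Pi$-variation) has to be invoked carefully.
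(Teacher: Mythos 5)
Your proposal is correct and follows essentially the same route as the paper's own proof in Appendix \ref{AppParetos}: the quantile representation $X_{n-k,n}\id U(Y_{n-k,n})$, the fact $(k/n)Y_{n-k,n}\conv{P}1$, and the regular-variation property of $U$ guaranteeing $U(t\,x(t))/U(t)\to 1$ when $x(t)\to 1$. You merely make explicit (via local uniform convergence of $U(tx)/U(t)\to x^{\gamma^+}$ and the case analysis in $\gamma$) what the paper compresses into a citation of Lemma 1.2.9 of \citet{deHaanFerreira:06}.
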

\begin{proof}
First note that $\{X_1,X_2,X_3,\ldots\}\id \{U(Y_1), U(Y_2),U(Y_3),\ldots\}$ where $Y_1,Y_2,Y_3,\ldots$ are i.i.d. with distribution function $1-1/x$, $x\geq1$. Hence $X_{n-k,n}\id U(Y_{n-k,n})$ for all $n$. Next note that
\begin{equation}\label{ParetoThresh}
\frac{k}{n} Y_{n-k,n} \conv{P}\, 1
\end{equation}\citep[cf. e.g. Corollary 2.2.2 p.41][]{deHaanFerreira:06}.

The regular variation of $U$ \citep[cf. Lemma 1.2.9 p.22][]{deHaanFerreira:06} implies 
\begin{equation}\label{RVt}
	\limit{t}\, \frac{U\bigl(t\,x(t)\bigr)}{U(t)} = 1
\end{equation}
provided $\lim_{t \rightarrow \infty} x(t)=1$. Hence
\begin{equation*}
	\frac{X_{n-k,n}}{U\bigl(\frac{n}{k}\bigr)}\,\id \, \frac{U(Y_{n-k,n})}{U\bigl(\frac{n}{k}\bigr)} \,\id\, \frac{U\bigl(\frac{n}{k}(\frac{k}{n}Y_{n-k,n})\bigr)}{U\bigl(\frac{n}{k}\bigr)}\conv{P}\, 1
\end{equation*}
where in the last step we have used \eqref{ParetoThresh} and \eqref{RVt}.

\end{proof}

%==================================================================

%%%%%%%%%%%%%%%%%%%%%%%%%%%%%%%%%%%%%%%%%%%%%%%%%%%%%%%%%%%%
\clearpage
\bibliography{bibTrend}

\begin{thebibliography}{}

\bibitem[Alexander et~al., 2006]{Alexanderetal:06}
Alexander, L.~V., Zhang, X., Peterson, T.~C., Caesar, J., Gleason, B.,
  Klein~Tank, A. M.~G., Haylock, M., Collins, D., Trewin, B., Rahimzadeh, F.,
  Tagipour, A., Rupa~Kumar, K., Revadekar, J., Griffiths, G., Vincent, L.,
  Stephenson, D.~B., Burn, J., Aguilar, E., Brunet, M., Taylor, M., New, M.,
  Zhai, P., Rusticucci, M., and Vazquez-Aguirre, J.~L. (2006).
\newblock Global observed changes in daily climate extremes of temperature and
  precipitation.
\newblock {\em J. Geophys. Res.}, 111:(D05109).

\bibitem[Allen and Ingram, 2002]{AllenIngram:02}
Allen, M.~R. and Ingram, W.~J. (2002).
\newblock Constraints on future changes in climate and the hydrologic cycle.
\newblock {\em Nature}, 419:224--232.

\bibitem[Buishand et~al., 2008]{Buishandetal:08}
Buishand, T.~A., de~Haan, L., and Zhou, C. (2008).
\newblock On spatial extremes: {W}ith application to a rainfall problem.
\newblock {\em Ann. Appl. Statist.}, 2:624--642.

\bibitem[Coles, 2001]{Coles:01}
Coles, S. (2001).
\newblock {\em An Introduction to Statistical Modeling of Extreme Values}.
\newblock Springer Verlag.

\bibitem[Davison and Smith, 1990]{DavisonSmith:90}
Davison, A.~C. and Smith, R.~L. (1990).
\newblock Models for exceedances over high thresholds (with discussion).
\newblock {\em J.R. Statist. Soc. B}, 52:393--442.

\bibitem[de~Haan and Ferreira, 2006]{deHaanFerreira:06}
de~Haan, L. and Ferreira, A. (2006).
\newblock {\em Extreme Value Theory: An Introduction}.
\newblock Springer.

\bibitem[Drees et~al., 2004]{Dreesetal:04}
Drees, H., Ferreira, A., and de~Haan, L. (2004).
\newblock On maximum likelihood estimation of the extreme value index.
\newblock {\em Ann. Appl. Probab.}, 14:1179--1201.

\bibitem[Groisman et~al., 2005]{Groismanetal:05}
Groisman, P.~Y., Knight, R.~W., Easterling, D.~R., Karl, T.~R., Hegerl, G.~C.,
  and Razuvaev, V.~N. (2005).
\newblock Trends in intense precipitation in the climate record.
\newblock {\em J. Climate}, 18:1326--1350.

\bibitem[Hall and Tajvidi, 2000]{HallTajvidi:00}
Hall, P. and Tajvidi, N. (2000).
\newblock Nonparametric analysis of temporal trend when fitting parametric
  models to extreme-value data.
\newblock {\em Statist. Sci.}, 15:153--167.

\bibitem[Hanel et~al., 2009]{Haneletal:09}
Hanel, M., Buishand, T.~A., and Ferro, C. A.~T. (2009).
\newblock A nonstationary index flood model for precipitation extremes in
  transient regional climate model simulations.
\newblock {\em J. Geophys. Res.}, 114:(D15107).

\bibitem[Hill, 1975]{Hill:75}
Hill, B.~M. (1975).
\newblock A simple general approach to inference about the tail of a
  distribution.
\newblock {\em Ann. Statist.}, 3:1163--1174.

\bibitem[Klein~Tank and K\"{o}nnen, 2003]{KleinTankKonnen:03}
Klein~Tank, A. M.~G. and K\"{o}nnen, G.~P. (2003).
\newblock Trends in indices of daily temperature and precipitation extremes in
  {Europe}, 1946-99.
\newblock {\em J. Climate}, 16:3665--3680.

\bibitem[Kottek et~al., 2006]{Kotteketal:06}
Kottek, M., Grieser, J., Christoph, B., Rudolph, B., and Rubel, F. (2006).
\newblock World map of the {K}\"{o}ppen-{G}eiger climate classification
  updated.
\newblock {\em Meteorol. Z.}, 15:259--263.

\bibitem[Lenderink et~al., 2009]{Lenderinketal:09}
Lenderink, G., van Meijgaard, E., and Selten, F. (2009).
\newblock Intense coastal rainfall in {T}he {N}etherlands in response to high
  sea surface temperatures: analysis of the event of august 2006 from the
  perspective of a changing climate.
\newblock {\em Clim. Dyn.}, 32:19--33.

\bibitem[Mannshardt-Shamseldin et~al., 2010]{Mannshardtetal:10}
Mannshardt-Shamseldin, E.~C., Smith, R.~L., Sain, S.~R., Mearns, L.~O., and
  Cooley, D. (2010).
\newblock Downscaling extremes: A comparison of extreme value distributions in
  point-source and gridded precipitation data.
\newblock {\em Ann. Appl. Statist.}, 4:484--502.

\bibitem[Pickands, 1971]{Pickands:71}
Pickands, J. (1971).
\newblock The two-dimensional {P}oisson process and extremal processes.
\newblock {\em J. Appl. Probab.}, 8:745--756.

\bibitem[Pickands, 1975]{Pickands:75}
Pickands, J. (1975).
\newblock Statistical inference using extreme order statistics.
\newblock {\em Ann. Statist.}, 3:119--131.

\bibitem[Rao, 1973]{Rao:73}
Rao, C.~R. (1973).
\newblock {\em Linear Statistical Inference and Its Applications}.
\newblock Wiley, New York.

\bibitem[Resnick, 1971]{Resnick:71}
Resnick, S.~I. (1971).
\newblock Tail equivalence and its applications.
\newblock {\em J. Appl. Prob.}, 8:135--156.

\bibitem[Serfling, 2002]{Serfling:02}
Serfling, R.~J. (2002).
\newblock {\em Approximation Theorems of Mathematical Statistics}.
\newblock Wiley Series in Probability and Statistics, Wiley, New York.

\bibitem[Smith, 1987]{Smith:87}
Smith, R.~L. (1987).
\newblock Estimating tails of probability distributions.
\newblock {\em Ann. Statist.}, 15:1174--1207.

\bibitem[Smith, 1989]{Smith:89}
Smith, R.~L. (1989).
\newblock Extreme value analysis of environmental time series: an application
  to trend detection in ground-level ozone.
\newblock {\em Statistical Science}, 4:367--393.

\bibitem[Yee and Stephenson, 2007]{YeeStephenson:07}
Yee, T.~W. and Stephenson, A.~G. (2007).
\newblock Vector generalized linear and additive extreme value models.
\newblock {\em Extremes}, 10:1–19.

\bibitem[Zhou, 2009]{Zhou:09}
Zhou, C. (2009).
\newblock Existence and consistency of the maximum likelihood estimator for the
  extreme value index.
\newblock {\em J. Multivariate Anal.}, 100:794--815.

\bibitem[Zolina et~al., 2009]{Zolinaetal:09}
Zolina, O., Simmer, C., Belyaev, K., Kapala, A., and Gulev, S. (2009).
\newblock Improving estimates of heavy and extreme precipitation using daily
  records from {E}uropean rain gauges.
\newblock {\em J. Hydrometeorol.}, 10:701--716.

\end{thebibliography}
\bibliographystyle{apalike}

\end{document}